\newtheorem{theorem}{Theorem}[section]
\newtheorem{definition}{Definition}[section]
\newtheorem{coro}{Corollary}[section]
\newtheorem{lemma}{Lemma}[section]
\newtheorem{prop}{Proposition}[section]
\theoremstyle{definition}
\newtheorem{remark}{Remark}[section]
\def \l {\left}
\def \r {\right}
\def \bR {\Bbb R}
\def \bB {\Bbb B}
\def\be{\begin{equation}}
\def\ee{\end{equation}}
\def\ba{\begin{array}}
	\def\ea{\end{array}}
\def\bd{\begin{definition}}
	\def\ed{\end{definition}}
\def\bt{\begin{theorem}}
	\def\et{\end{theorem}}
\def\bc{\begin{corollary}}
	\def\ec{\end{corollary}}
\def\bl{\begin{lemma}}
	\def\el{\end{lemma}}
\def\bdm{\begin{displaymath}}
\def\edm{\end{displaymath}}
\begin{document}

%-------------------------------------------------------------------------
% editorial commands: to be inserted by the editorial office
%
%\firstpage{1} \volume{228} \Copyrightyear{2004} \DOI{003-0001}
%
%
%\seriesextra{Just an add-on}
%\seriesextraline{This is the Concrete Title of this Book\br H.E. R and S.T.C. W, Eds.}
%
% for journals:
%
%\firstpage{1}
%\issuenumber{1}
%\Volumeandyear{1 (2004)}
%\Copyrightyear{2004}
%\DOI{003-xxxx-y}
%\Signet
%\commby{inhouse}
%\submitted{March 14, 2003}
%\received{March 16, 2000}
%\revised{June 1, 2000}
%\accepted{July 22, 2000}
%
%
%
%---------------------------------------------------------------------------
%Insert here the title, affiliations and abstract:
%

\title[Operators on Herz-type spaces]
 {Operators on Herz-type spaces associated with ball quasi-Banach function spaces}

%----------Author 1
\author[M. Wei]{Mingquan Wei$^*$\footnote {$^*$ Corresponding author}}

\address{%
School of Mathematics and Stastics, Xinyang Normal University\\
Xinyang 464000, China}

\email{weimingquan11@mails.ucas.ac.cn}

%\thanks{This work was completed with the support of our
%\TeX-pert.}
%----------Author 2
\author[D. Yan]{Dunyan Yan}

\address{%
	School of Mathematical Sciences, University of Chinese Academy of Sciences\\
	Beijing 100049, China}

\email{ydunyan@ucas.ac.cn}

%----------classification, keywords, date
\subjclass{Primary 42B20; Secondary 42B25, 42B35}

\keywords{Herz-type space, extrapolation, ball quasi-Banach function space,  singular integral operator, Parametric Marcinkiewicz integral, oscillatory singular integral operator}

\date{September 26, 2021}
%----------additions
%\dedicatory{To my boss}
%%% ----------------------------------------------------------------------

\begin{abstract}
Let $\alpha\in\bR$, $0<p<\infty$ and $X$ be a ball quasi-Banach function space on $\bR^n$. In this article, we introduce the Herz-type space $\dot{K}^{\alpha,p}_X(\bR^n)$ associated with $X$.  We identify the dual space of   $\dot{K}^{\alpha,p}_X(\bR^n)$, by which the boundedness of Hardy-Littlewood maximal operator on $\dot{K}^{\alpha,p}_X(\bR^n)$ is proved. By using the extrapolation
theorem on ball quasi-Banach function spaces,  we establish the extrapolation theorem on Herz-type spaces associated with ball quasi-Banach function spaces. Applying our extrapolation theorem, the boundedness of singular integral operators with rough kernels and their commutators, parametric Marcinkiewicz integrals, and oscillatory singular integral operators on $\dot{K}^{\alpha,p}_X(\bR^n)$ is obtained. As examples, we give some concrete function spaces which are members of Herz-type spaces associated with ball quasi-Banach function spaces.
\end{abstract}

%%% ----------------------------------------------------------------------
\maketitle
%%% ----------------------------------------------------------------------
%\tableofcontents
\section{Introduction}
As natural generalizations of Lebesgue spaces, Herz spaces and their related function spaces have been playing
an important role in harmonic analysis 
and partial diferential equations, see [\cite{fu2007characterization,komori2004notes,liu2000boundedness,lu1996Hardy,miyachi2001remarks,wang2018higher,wei2021extrapolation}] for example. The Herz
spaces were initially introduced by Herz [\cite{herz1968lipschitz}] to study the Fourier series and Fourier transform in 1968. Nowadays, the boundedness of many important operators in harmonic analysis such as Hardy-Littlewood maximal operator, fractional integral operators and singular integral operators, have been extended to Herz spaces [\cite{almeida2012maximal,grafakos1998bilinear,komori2004notes,li1996boundedness,lu1996Hardy,tang2007cbmo}]. We refer the readers to the book of Lu et al. [\cite{lu2008herz}] for a comprehensive explaination for Herz-type spaces.

In the last two decades, as the development of variable exponent Lebesgue
spaces [\cite{diening2011lebesgue}], Herz spaces have been extended to the variable setting.  Herz spaces with variable exponent were introduced by Izuki [\cite{izuki2010boundedness}], who considered the boundedness of some sublinear operators on these spaces. In addition, the real variable theory for Herz-type Hardy spaces with variable exponent was establish by Wang and Liu [\cite{wang2012herz}] in 2012. After that, Herz-type spaces with variable exponent became important function spaces in the study of harmonic analysis and partial diferential equations, see [\cite{abdalmonem2021intrinsic,dong2015herz,izuki2010boundedness,izuki2010commutators,izuki2016hardy,liu2000boundedness,ragusa2009homogeneous,ragusa2012parabolic,wang2016continuity,wang2021estimates,wang2020boundedness,xu2018variable}].

Recently, Herz spaces were also extended to mixed Herz spaces by the first author [\cite{wei-2021}]. In the same paper [\cite{wei-2021}], the author established a criterion for the boundedness of operators on mixed Herz spaces if these operators are bounded on some weighted Lebesgue spaces.
By using this criterion, the boundedness of many operators, such as Hardy-Littlewood maximal operator, Calder{\'o}n-Zygmund singular integral operators, fractional integral operators, fractional maximal operator and Parametric Marcinkiewicz integrals on mixed Herz spaces was obtained. 
See also [\cite{wei2021characterization}] for the boundedness of commutators of Hardy-type operators on mixed Herz spaces.

As is well known, Banach function spaces introduced in [\cite{bennett1988interpolation}] unify many function spaces such as Lebesgue spaces and Lorentz spaces. Recall that ball quai-Banach function spaces studied by Sawano et al. [\cite{sawano2017hardy}] are generalizations of Banach function spaces. Compared with Banach function spaces, ball Banach
function spaces contain more function spaces. For instance, Morrey spaces, mixed-norm Lebesgue spaces, weighted Lebesgue spaces, and Orlicz-slice spaces are all
ball quasi-Banach function spaces, which may not be Banach function spaces. See [\cite{chang2020littlewood,ho2019erdelyi,izuki2019john,wang2020applications,wang2021weak,yan2020intrinsic,zhang2021weak}] for more studies on ball Banach function spaces. 

Motivated by the results in above references, it is natural to ask whether we can find some Herz-type spaces which contain Herz spaces, Herz spaces with variable exponent and mixed Herz spaces as special cases. In this paper, we give an affirmative answer by introducing Herz-type spaces associated with ball quasi-Banach function spaces. Moreover, we will study the mapping properties of various integral operators on these spaces. Now, we explain the outline
of this paper. In Sect. 2, we give the definition and some properties
of Herz-type spaces associated with ball quasi-Banach function spaces. In Sect. 3, the authors establish the boundedness of Hardy-Littlewood maximal operator on Herz-type spaces associated with ball Banach function spaces. Subsequently, the extrapolation theorem for Herz-type spaces associated with ball quasi-Banach function spaces is build by using the extrapolation theorem on ball quasi-Banach function spaces in Sect. 4. Applying our extrapolation theorem, we can obtaine the boundedness of singular integral operators with rough kernels and their commutators, parametric Marcinkiewicz integrals,  and oscillatory singular integral operators on Herz-type spaces associated with ball quasi-Banach function spaces in Sect. 5. At last, we give some concrete examples of Herz-type spaces associated with ball quasi-Banach function spaces in Sect. 6.

Throughout the paper, we use the following notations.

For any $r>0$ and $x\in \bR^n$, let $B(x,r)=\{y: |y-x|<r\}$ be the ball centered at $x$ with radius $r$.  Let $\bB=\{B(x,r):x\in\bR^n, r>0\}$ be the set of all such balls. 
We use $\chi_E$ and $|E|$ to denote the characteristic function and the Lebesgue measure of a measurable set $E$.
Let $\mathcal{M}(E)$ be the class of Lebesgue measurable functions on $E$. For any quasi-Banach space $X$, the space $L^X_{\rm{loc}}(E)$ consists of all functions $f\in\mathcal{M}(E)$ such that $f\chi_F\in X$ for all compact subsets $F\subseteq E$.  We denote the set of all complex numbers, all non-negative integers and all integers by $\mathbb{C}$, $\mathbb{N}$ and $\mathbb{Z}$, respectively. For a non-negative function $w\in L^1_{\rm{loc}}(\bR^n)$ and $0<p<\infty$, the weighted Lebesgue space $L^p_w(\bR^n)$ consists of all $f\in \mathcal{M}(\bR^n)$ such that $\|f\|_{L^p_w}:=\left(\int_{\bR^n}|f(x)|^pw(x)dx\right)^{1/p}<\infty$. By $A\lesssim B$, we
mean that $A\leq CB$ for some constant $C>0$, and $A\sim B$ means that $A\lesssim B$ and $B\lesssim A$.

For all $k\in\mathbb{Z}$, let $B_k=B(0,2^k)$, $C_k=B_k\backslash B_{k-1}$. Denote $\chi_k=\chi_{C_k}$ for $k\in\mathbb{Z}$, $\tilde{\chi}_k=\chi_{C_k}$ if $k\in\mathbb{N}^+$ and $\tilde{\chi}_0=\chi_{B_0}$.

\section{Definitions and preliminaries}
In this section, we first recall the definitions and  some basic properties of Herz spaces and ball Banach function spaces, and then give the definition of Herz-type spaces associated with ball quasi-Banach function spaces.

Now we recall the definition of the classical Herz spaces.
\begin{definition}\label{0-b*}
	Let ~$\alpha\in \bR$, $0< p,q<\infty$.
	
	{\rm (i)} The homogeneous Herz space $\dot{K}^{\alpha,p}_q(\bR^n)$ is defined by
	\begin{eqnarray*}
		{\dot{K}^{\alpha,p}_q}(\bR^n):=
		\l\{f\in L^q_{\rm{loc}}(\bR^n\backslash\{0\}): \|f\|_{\dot{K}^{\alpha,p}_q}<\infty\r\},
	\end{eqnarray*}
	where
	\begin{eqnarray*}
		\|f\|_{\dot{K}^{\alpha,p}_q}:=
		\l\{\sum_{k\in\mathbb{Z}}2^{kp\alpha}\|f\chi_{k}\|^p_{L^q}\r\}^{1/p}.
	\end{eqnarray*}
If $p=\infty$ or $q=\infty$, then we have to make appropriate modifications.

	{\rm (ii)} The non-homogeneous Herz space $K^{\alpha,p}_q(\bR^n)$ is defined by
	\begin{eqnarray*}
		{K^{\alpha,p}_q}(\bR^n):=
		\l\{f\in L^q_{\rm{loc}}(\bR^n): \|f\|_{K^{\alpha,p}_q}<\infty\r\},
	\end{eqnarray*}
	where \begin{eqnarray*}
		\|f\|_{K^{\alpha,p}_q}:=
		\l\{\sum_{k\in\mathbb{N}}2^{kp\alpha}\|f\tilde{\chi}_k\|^p_{L^q}\r\}^{1/p}.
	\end{eqnarray*}
If $p=\infty$ or $q=\infty$, then we have to make appropriate modifications.
\end{definition}
It is obvious that both $\dot{K}^{\alpha,p}_q(\bR^n)$ and $K^{\alpha,p}_q(\bR^n)$ are quasi-Banach spaces, and if $p,q\geq1$, they are further Banach spaces. For more properties for Herz spaces, we refer the readers to [\cite{lu2008herz}].

To study Herz-type spaces associated with ball quasi-Banach function spaces, we need the definition of ball quasi-Banach function spaces.
\begin{definition}\label{D-2-1}
	A quasi-Banach space $X\subseteq \mathcal{M}(\bR^n)$ is called a ball quasi-Banach function space if
	it satisfies
	
	{\rm (i)} $\|f\|_X=0$ implies that $f=0$ almost everywhere;
	
	{\rm (ii)} $|g|\leq|f|$ almost everywhere implies that $\|g\|_X\leq\|f\|_X$;
	
	{\rm (iii)}  $0\leq f_m\uparrow f$ almost everywhere implies that $0\leq \|f_m\|_X\uparrow \|f\|_X$;
	
	{\rm (iv)} $B\in \bB$ implies that $\chi_B\in X$.
\end{definition}
Moreover, a ball quasi-Banach function space $X$ is called a ball Banach function space if the norm
of $X$ satisfies the triangle inequality: for any $f,g\in X$
\begin{equation*}\label{E-2-2}
\|f+g\|_{X}\leq \|g\|_{X}+\|g\|_{X},
\end{equation*}
and for any $B\in \bB$, there exists a positive constant $C(B)$, depending on $B$, such that, for any $f\in X$, $$\int_{B}f(x)dx\leq C(B)\|f\|_X.$$
\begin{remark}\label{R-2-1}
	{\rm (i)} Observe that, in Definition \ref{D-2-1}, if we replace any ball $B$ by any bounded
	measurable set $E$, the definitions are mutually equivalent.
	
	{\rm (ii)} One can see from the definition that every ball quasi-Banach function space is a quasi-Banach
	space, and the converse is not necessary to be true. 
	%Note that ball Banach function spaces are more general than Banach function spaces introduced in [\cite{bennett1988interpolation}]. For instance, the family of ball Banach function spaces includes Morrey spaces,
	%mixed-norm Lebesgue spaces, variable Lebesgue spaces, and
	%Orlicz-slice spaces, which are not necessary to be Banach function spaces, see [\cite{sawano2017hardy,wang2021weak}].
\end{remark}

The following notion of the associate space of a ball Banach function space can be found, for
instance, in [\cite{bennett1988interpolation}, Chapter 1, Definitions 2.1 and 2.3].
\begin{definition}\label{D-2-2}
	For any ball Banach function space $X$, the associate space (also called the K{\"o}the
	dual) $X'$	is defined by setting
	\begin{equation}\label{E-2-1}
	X':=\{f\in  \mathcal{M}(\bR^n): \|f\|_{X'}=\sup_{g\in X,~\|g\|_X\leq1}\|fg\|_{L^1(\bR^n)}<\infty\},
	\end{equation}
	where $\|\cdot\|_{X'}$ is called the associate norm of $\|\cdot\|_{X}$.
\end{definition}
\begin{remark}\label{R-2-2}
	From [\cite{sawano2017hardy}, Proposition 2.3], we know that, if $X$ is a ball Banach function space, then
	its associate space $X'$ is also a ball Banach function space.
\end{remark}

The following lemma gives an equivalent norm of ball Banach function spaces by using the second associate norm, see [\cite{zhang2021weak}, Lemma 2.6].
\begin{lemma}\label{L-2-1}
	Let $X$ be a ball Banach function space. Then $X$ coincides with its second associate space $X''$. In other words, a function $f$ belongs to $X$ if and only if it belongs to $X''$ and, in that case, $\|f\|_{X}=\|f\|_{X''}$.
\end{lemma}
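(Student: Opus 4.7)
The plan is to establish the two contractive inclusions $X\hookrightarrow X''$ and $X''\hookrightarrow X$.

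For the easy direction, the H\"older-type inequality for the pair $(X,X')$ — namely $\int_{\bR^n}|fg|\,dx\leq \|f\|_X\|g\|_{X'}$ for $f\in X$ and $g\in X'$ — is immediate from Definition \ref{D-2-2} applied to $g/\|g\|_{X'}$. Taking the supremum over all $g\in X'$ with $\|g\|_{X'}\leq 1$ yields $\|f\|_{X''}\leq \|f\|_X$, so $X\subseteq X''$ with embedding constant at most $1$.

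For the reverse direction, let $f\in X''$ and assume without loss of generality $f\geq 0$. Set $f_N:=\min(f,N)\chi_{B(0,N)}$, so that $f_N$ is bounded with bounded support and $0\leq f_N\uparrow f$ almost everywhere. Property (iv) of Definition \ref{D-2-1} gives $\chi_{B(0,N)}\in X$, hence property (ii) yields $f_N\in X$ with $\|f_N\|_X<\infty$. The central claim is that $\|f_N\|_X=\|f_N\|_{X''}$ for each $N$. Granting this, the H\"older inequality from the previous paragraph gives $\|f_N\|_{X''}\leq \|f\|_{X''}$, so $\|f_N\|_X\leq \|f\|_{X''}$ uniformly in $N$; the Fatou-type property (iii) of Definition \ref{D-2-1} then forces $\|f_N\|_X\uparrow \|f\|_X$, whence $\|f\|_X\leq \|f\|_{X''}$, which both shows $f\in X$ and completes the isometry.

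The main obstacle is the claim $\|f_N\|_X=\|f_N\|_{X''}$ for bounded $f_N$ with support in a single ball $B=B(0,N)$. The approach is a duality argument in this localized setting: consider the linear functional $L(g):=\int_{\bR^n}f_N g\,dx$ on $X'$, which is bounded with $\|L\|_{(X')^*}=\|f_N\|_{X''}$. One must produce, for each $\varepsilon>0$, a function $h\in X'$ with $\|h\|_{X'}\leq 1$ and $\int f_N h\,dx\geq \|f_N\|_X-\varepsilon$. Since by Remark \ref{R-2-2} the associate space $X'$ is itself a ball Banach function space (so $\chi_B\in X'$), and since $f_N\chi_B=f_N$, the problem reduces to duality within the subspace of functions supported in $B$, where the classical Lorentz--Luxemburg argument for Banach function spaces (see [\cite{bennett1988interpolation}, Chapter 1, Theorem 2.7]) applies essentially unchanged. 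An alternative route is a direct Hahn--Banach extension on the Banach space $X$, followed by verification — using boundedness and compact support of $f_N$ together with property (iii) of Definition \ref{D-2-1} — that the extracted norming functional is represented by an element of $X'$ rather than merely of the abstract dual $X^{*}$.
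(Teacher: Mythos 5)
Your argument is essentially sound, but note that the paper does not prove this lemma at all: it is quoted verbatim from [\cite{zhang2021weak}, Lemma 2.6], which in turn rests on the classical Lorentz--Luxemburg theorem [\cite{bennett1988interpolation}, Chapter 1, Theorem 2.7]. So there is no in-paper proof to compare against; what you have done is reconstruct the standard argument and check that it survives the passage from Banach function spaces to \emph{ball} Banach function spaces. Your reconstruction gets the two genuinely delicate points right: first, that the reverse inclusion must be run through truncations $f_N=\min(f,N)\chi_{B(0,N)}$ because a ball Banach function space only embeds locally into $L^1$ on balls (the axiom $\int_B f\leq C(B)\|f\|_X$), which is exactly where the Hahn--Banach separation in $L^1(B)$ takes place and why the norming function $h\in L^\infty(B)$ lands in $X'$ (via $\chi_B\in X'$ and Remark \ref{R-2-2}); second, that the limit passage $\|f_N\|_X\uparrow\|f\|_X$ uses axiom (iii) in its extended form, i.e.\ with $\|\cdot\|_X$ read as a $[0,\infty]$-valued functional on all of $\mathcal{M}(\bR^n)$ so that finiteness of $\sup_N\|f_N\|_X$ is what certifies $f\in X$ --- worth saying explicitly, since Definition \ref{D-2-1}(iii) as literally written presupposes $f$ is already in the space. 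Two caveats. The central claim $\|f_N\|_{X}=\|f_N\|_{X''}$ is where all the work lives, and you defer it to ``the classical argument applies essentially unchanged''; that is acceptable as a citation (it is what the paper itself does for the whole lemma), but to make it self-contained you would need to spell out that the unit ball of $X$ restricted to nonnegative bounded functions supported in $B$ is convex and $L^1(B)$-closed (Fatou's lemma for $\|\cdot\|_X$, again from axiom (iii)) before separating. By contrast, your ``alternative route'' via an abstract Hahn--Banach norming functional on $X$ does not work as described: elements of $X^{*}$ need not be integral functionals (think of $L^\infty$), and extracting an order-continuous part that still norms $f_N$ requires a Yosida--Hewitt type decomposition, not mere ``verification''; I would drop that remark and keep only the separation argument.
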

From Definition \ref{D-2-1} and (\ref{E-2-1}), one can obtain H{\"o}lder's inequality on ball Banach function spaces. See also [\cite{bennett1988interpolation}, Theorem 2.4] for the proof.
\begin{lemma}\label{L-2-2}
	Let $X$ be a ball Banach function space, and $X'$
	its associate space. If $f\in X $ and $g\in X'$, then $fg$ is integrable and
	\begin{eqnarray*}
		\int_{\bR^N}|f(x)g(x)|dx\leq \|f\|_{X}\|g\|_{X'}.
	\end{eqnarray*}
\end{lemma}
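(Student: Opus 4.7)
The plan is to reduce Lemma \ref{L-2-2} directly to the definition of the associate norm in Definition \ref{D-2-2}. The statement is essentially a reformulation of that definition, so the work is mostly bookkeeping: handle the degenerate cases, normalize $f$ so that it can be used as a test function in the supremum defining $\|g\|_{X'}$, and then rescale back.

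First I would dispose of the trivial case. If $\|f\|_X = 0$, then by Definition \ref{D-2-1}(i) we have $f = 0$ almost everywhere, so $fg = 0$ almost everywhere and both sides of the desired inequality are $0$. Similarly, if $\|g\|_{X'} = 0$, one can argue that $fg = 0$ almost everywhere (using that $fg$ has $L^1$-norm bounded by $\|g\|_{X'}$ times $\|f\|_X$ for every bounded $f$ compactly supported, via the same associate-norm definition, together with property (iv)). Thus we may assume $\|f\|_X > 0$.

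Next I would note the normalization identity $\|\,|f|\,\|_X = \|f\|_X$, which follows from Definition \ref{D-2-1}(ii) applied in both directions (since $||f|| = |f|$). Setting $h := |f|/\|f\|_X$, we obtain a non-negative function with $\|h\|_X = 1$. By the very definition of the associate norm in \eqref{E-2-1},
\begin{equation*}
\|g\|_{X'} = \sup_{\varphi \in X,\ \|\varphi\|_X \le 1}\int_{\bR^n}|\varphi(x)\,g(x)|\,dx \ \ge\ \int_{\bR^n}|h(x)\,g(x)|\,dx = \frac{1}{\|f\|_X}\int_{\bR^n}|f(x)\,g(x)|\,dx.
\end{equation*}
Multiplying through by $\|f\|_X$ yields the desired inequality, and in particular shows the integral is finite, so $fg \in L^1(\bR^n)$.

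There is no real obstacle here; the argument is pure definition-chasing. The only subtle point is the measurability/integrability of $|fg|$ before we know the bound: since $f$ and $g$ are measurable, $|fg|$ is non-negative and measurable, hence the integral is well-defined in $[0,\infty]$, and the displayed estimate shows it is actually finite whenever the right-hand side is.
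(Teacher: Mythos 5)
Your proof is correct and follows exactly the route the paper indicates: the paper gives no written proof of Lemma \ref{L-2-2}, stating only that it follows from Definition \ref{D-2-1} and \eqref{E-2-1} (citing Bennett--Sharpley, Theorem 2.4), and your normalization of $|f|/\|f\|_X$ as a test function in the supremum defining $\|g\|_{X'}$ is precisely that standard definition-chasing argument. The degenerate cases are handled appropriately (indeed the $\|g\|_{X'}=0$ case needs no separate treatment, since the main estimate already covers it once $\|f\|_X>0$).
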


The Hardy-Littlewood maximal operator $M$ is defined by setting, for any $f\in L^1_{{\rm loc}}(\bR^n)$ 
and $x\in\bR^n$,
\begin{equation}\label{E-2-3}
M (f)(x):=\sup_{B}\frac{1}{|B|}\int_B|f(y)|dy,
\end{equation}
where the supremum is taken over all balls $B\in \bB$ containing $x$.

If we assume some boundedness results of the Hardy-Littlewood maximal operator $M$ on the ball Banach function space $X$, then the norm $\|\cdot\|_{X}$ has properties similar to the classical Muckenhoupt
weights. 
\begin{lemma}\label{L-2-4}
	Let $X$ be a ball Banach function space. Suppose that the Hardy-Littlewood maximal
	operator $M$ is bounded on $X'$.
	%that is,
	%\begin{equation}\label{G-2-1}
	%\|\chi_{Mf>\lambda}\|_{X}\lesssim \frac{1}{\lambda}\|f\|_{X},
	%\end{equation}
	%is true for all $f\in X$ and all $\lambda>0$. 
	Then for all balls $B\in \mathbb{B}$, we have
	\begin{equation}\label{G-2-2}
	\|\chi_{B}\|_{X}\|\chi_{B}\|_{X'}\lesssim |B|
	\end{equation}	
\end{lemma}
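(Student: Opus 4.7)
The plan is to argue by duality, leveraging Lemma \ref{L-2-1} to pass from $\|\chi_B\|_X$ to the associate characterization $\|\chi_B\|_{X''}$, so that the $X'$-boundedness of $M$ can be brought to bear. Specifically, by Lemma \ref{L-2-1}, $\|\chi_B\|_X=\|\chi_B\|_{X''}$, and unfolding the definition of the associate norm gives
\begin{equation*}
\|\chi_B\|_X=\sup_{\|f\|_{X'}\leq 1}\int_{\bR^n}|f(x)|\chi_B(x)\,dx=\sup_{\|f\|_{X'}\leq 1}\int_B|f(x)|\,dx.
\end{equation*}
Thus it suffices to show $\int_B|f(x)|\,dx\lesssim |B|/\|\chi_B\|_{X'}$ uniformly for $f\in X'$ with $\|f\|_{X'}\leq 1$.

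Next, I would exploit the pointwise inequality that is the source of almost every Muckenhoupt-type estimate: for every $x\in B$,
\begin{equation*}
M(f\chi_B)(x)\geq \frac{1}{|B|}\int_B|f(y)|\,dy,
\end{equation*}
which in multiplicative form reads $\chi_B(x)\cdot\frac{1}{|B|}\int_B|f|\leq M(f\chi_B)(x)$. The constant $\frac{1}{|B|}\int_B|f|$ then factors out when I apply $\|\cdot\|_{X'}$ to both sides, yielding
\begin{equation*}
\frac{1}{|B|}\l(\int_B|f(y)|\,dy\r)\|\chi_B\|_{X'}\leq \|M(f\chi_B)\|_{X'}.
\end{equation*}
Invoking the assumed $X'$-boundedness of $M$ and monotonicity (Definition \ref{D-2-1}(ii)), the right-hand side is controlled by $\|f\chi_B\|_{X'}\leq\|f\|_{X'}\leq 1$, up to a constant independent of $f$ and $B$.

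Combining the two displays gives $\int_B|f|\lesssim |B|/\|\chi_B\|_{X'}$ for every admissible $f$, and taking the supremum over such $f$ produces $\|\chi_B\|_X\leq C|B|/\|\chi_B\|_{X'}$, which is \eqref{G-2-2} after rearrangement.

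There is no serious obstacle here; the argument is entirely structural, and the only potentially subtle point is the legitimacy of pulling the scalar $\frac{1}{|B|}\int_B|f|$ outside the $X'$-norm, which is valid because $\|\cdot\|_{X'}$ is absolutely homogeneous. One should also verify at the outset that $f\chi_B\in X'$ so that applying $M$ is meaningful; this follows from Remark \ref{R-2-2} (so $X'$ is itself a ball Banach function space) together with the fact that $\|f\chi_B\|_{X'}\leq\|f\|_{X'}<\infty$ by Definition \ref{D-2-1}(ii).
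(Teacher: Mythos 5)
Your proof is correct, and it is essentially the standard argument: the paper itself gives no proof of this lemma but defers to Izuki--Noi, and your combination of $\|\chi_B\|_X=\|\chi_B\|_{X''}$ (Lemma \ref{L-2-1}) with the pointwise bound $\frac{1}{|B|}\bigl(\int_B|f|\bigr)\chi_B\leq M(f\chi_B)$ and the assumed boundedness of $M$ on $X'$ is exactly the argument in that reference. The auxiliary points you flag (homogeneity of the norm, $f\chi_B\in X'$, and local integrability of $f$ so that $M$ is defined) are all handled correctly.
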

\begin{lemma}\label{L-2-5} 
	Let $X$ be a ball Banach function space. Suppose that $M$ is bounded on the associate
	space $X'$. Then there exists a constant $0<\delta_X<1$, such that for all balls $B\subseteq \mathbb{B}$ and all measurable sets $E\subseteq B$, we have
	\begin{equation}\label{G-2-3}
	\frac{\|\chi_E\|_X}{\|\chi_B\|_X}\lesssim\l(\frac{|E|}{|B|}\r)^{\delta_X}.
	\end{equation}
\end{lemma}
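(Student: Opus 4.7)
The plan is to exploit the boundedness of $M$ on $X'$ via the Rubio de Francia algorithm and invoke the classical reverse Hölder inequality for Muckenhoupt $A_1$ weights, then package everything through the duality of Lemmas 2.1 and 2.2. First, by Lemma 2.1 and the definition of the associate norm, $\|\chi_E\|_X = \|\chi_E\|_{X''}=\sup\{\int_E |g|\,dx : g\in X',\ \|g\|_{X'}\le 1\}$, so it suffices to bound $\int_E |g|\,dx$ uniformly for such $g$ by the correct power of $|E|/|B|$ times $\|\chi_B\|_X$.

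Given such a $g$, I would invoke the Rubio de Francia construction
$$\mathcal R g := \sum_{k=0}^\infty \frac{M^k g}{(2\mathcal N)^k},\qquad \mathcal N := \|M\|_{X'\to X'}.$$
Using that $M$ is subadditive on countable sums of non-negative functions together with property (iii) of Definition 2.1 applied to $X'$ (which is itself a ball Banach function space by Remark 2.2), one verifies $|g|\le \mathcal R g$, $\|\mathcal R g\|_{X'}\le 2\|g\|_{X'}$, and, crucially, $M(\mathcal R g)\le 2\mathcal N\cdot \mathcal R g$ almost everywhere; hence $\mathcal R g$ is a Muckenhoupt $A_1$ weight whose constant is controlled solely by $\mathcal N$. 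By the classical Coifman–Fefferman reverse Hölder inequality applied to $\mathcal R g$, there exist $s>1$ and $C>0$, depending only on $\mathcal N$ and $n$, such that for every ball $B$
$$\left(\frac{1}{|B|}\int_B (\mathcal R g)^s\,dx\right)^{1/s} \le \frac{C}{|B|}\int_B \mathcal R g\,dx,$$
and a direct Hölder estimate on $E\subseteq B$ gives $\int_E \mathcal R g\,dx \le C(|E|/|B|)^{1/s'}\int_B \mathcal R g\,dx$.

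Finally, combining these bounds with Hölder's inequality on ball Banach function spaces (Lemma 2.2) applied to $\chi_B$ and $\mathcal R g$,
$$\int_E |g|\,dx \le \int_E \mathcal R g\,dx \le C\left(\frac{|E|}{|B|}\right)^{1/s'}\|\chi_B\|_X\|\mathcal R g\|_{X'} \le 2C\left(\frac{|E|}{|B|}\right)^{1/s'}\|\chi_B\|_X\|g\|_{X'}.$$
Taking the supremum over admissible $g$ yields the lemma with $\delta_X:=1/s'\in(0,1)$. The main obstacle is producing a quantitative reverse Hölder exponent $s>1$ depending only on the $A_1$ constant $2\mathcal N$; this is classical (Coifman–Fefferman, or the John–Nirenberg argument for $A_\infty$) and is the only step in the argument that is not purely formal.
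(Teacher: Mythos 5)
Your proof is correct and is essentially the argument the paper relies on: the paper does not reproduce a proof of this lemma but refers to Izuki--Noi, whose argument is exactly your Rubio de Francia construction $\mathcal{R}g$ with $[\mathcal{R}g]_{A_1}\le 2\mathcal{N}$, followed by the reverse H\"older ($A_\infty$) property of $A_1$ weights and the duality $\|\chi_E\|_X=\|\chi_E\|_{X''}$ together with H\"older's inequality on ball Banach function spaces. No gaps.
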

One can find the proofs of Lemma \ref{L-2-4}  and Lemma \ref{L-2-5} in [\cite{izuki2016boundedness}]. Although these lemmas were proved only for Banach function spaces in [\cite{izuki2016boundedness}], the results also hold for ball Banach function spaces by checking the proofs carefully. Note that in the particular case $X=L^q(\bR^n)$ ($0<q<\infty$), we have $\delta_X=1/q$ from a direct computation.

We also need the notion of the convexity of ball quasi-Banach spaces in the following sections, see [\cite{sawano2017hardy}, Definition 2.6].
\begin{definition}\label{D-2-3}
	Let $X$ be a ball quasi-Banach function space and $p\in (0,\infty)$. The $p$-convexification
	$X^p$ of $X$ is defined by setting $X^p:=\{f\in  \mathcal{M}(\bR^n): |f|^p\in X\}$ equipped with the quasi-norm $\|f\|_{X^p}=\|f^p\|_{X}^{1/p}$.
\end{definition}

Combining Herz spaces with ball quasi-Banach function spaces, we can define the Herz-type spaces associated with ball quasi-Banach function spaces as follows.
\begin{definition}\label{D-2-1-1}
	Let ~$\alpha\in \bR$, $0< p\leq\infty$ and $X$ be a ball quasi-Banach function space.
	
	{\rm (i)} The homogeneous Herz-type space associated with ball quasi-Banach function spaces $\dot{K}^{\alpha,p}_X(\bR^n)$ is defined by
	\begin{eqnarray*}
		{\dot{K}^{\alpha,p}_X}(\bR^n):=
		\l\{f\in L^X_{\rm{loc}}(\bR^n\backslash\{0\}): \|f\|_{\dot{K}^{\alpha,p}_X}<\infty\r\},
	\end{eqnarray*}
	where
	\begin{eqnarray*}
		\|f\|_{\dot{K}^{\alpha,p}_X}:=
		\l\{\sum_{k\in\mathbb{Z}}2^{kp\alpha}\|f\chi_{k}\|^p_{X}\r\}^{{1}/{p}}.
	\end{eqnarray*}
If $p=\infty$, then we have to make appropriate modifications.

	{\rm (ii)} The non-homogeneous Herz-type space $K^{\alpha,p}_X(\bR^n)$ is defined by
	\begin{eqnarray*}
		{K^{\alpha,p}_X}(\bR^n):=
		\l\{f\in L^X_{\rm{loc}}(\bR^n): \|f\|_{K^{\alpha,p}_X}<\infty\r\},
	\end{eqnarray*}
	where \begin{eqnarray*}
		\|f\|_{K^{\alpha,p}_X}:=
		\l\{\sum_{k\in\mathbb{N}}2^{kp\alpha}\|f\tilde{\chi}_k\|^p_{X}\r\}^{{1}/{p}}.
	\end{eqnarray*}
If $p=\infty$, then we have to make appropriate modifications.
\end{definition}
One can check that $\dot{K}^{\alpha,p}_X(\bR^n)$ and $K^{\alpha,p}_X(\bR^n)$ are quasi-Banach spaces, and if  $p\geq1$ and $X$ is a ball Banach function space, then they are further Banach spaces.

Obviously, if $X=L^q(\bR^n)~(0<q<\infty)$, then $\dot{K}^{\alpha,p}_X(\bR^n)$ and $K^{\alpha,p}_X(\bR^n)$ reduce to the classical Herz spaces $\dot{K}^{\alpha,p}_q(\bR^n)$ and $K^{\alpha,p}_q(\bR^n)$ in Difinition \ref{0-b*}. Moreover, if $X=L^{q(\cdot)}(\bR^n)$,  variable exponent Lebesgue spaces, we recover the Herz spaces with variable exponent defined in [\cite{izuki2010boundedness}]. Similarly, by choosing different ball quasi-Banach function spaces, we can define some new Herz-type spaces. In the last section of this paper, some concrete Herz-type spaces are given as examples of Herz spaces associated with ball quasi-Banach function spaces.

The following proposition gives the conditions such that Herz-type spaces associated with ball quasi-Banach function spaces are also ball Banach function spaces.
\begin{prop}\label{p-7}
	Let $\alpha\in\bR$, $0<p\leq\infty$, and $X$ be a ball Banach function space such that $\alpha+n\delta_X>0$, where $\delta_X$ is the constant appeared in (\ref{G-2-3}). Then the Herz-type spaces $\dot{K}^{\alpha,p}_X(\bR^n)$ and $K^{\alpha,p}_X(\bR^n)$ are all ball quasi-Banach function spaces. If further $1\leq p<\infty$ and $\alpha<n\delta_{X'}$ where $\delta_{X'}$ is the constant appeared in (\ref{G-2-3}), then $\dot{K}^{\alpha,p}_X(\bR^n)$ and $K^{\alpha,p}_X(\bR^n)$ are ball Banach function spaces.
\end{prop}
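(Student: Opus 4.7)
The plan is to verify, in turn, each axiom of Definition~\ref{D-2-1} for both $\dot{K}^{\alpha,p}_X(\bR^n)$ and $K^{\alpha,p}_X(\bR^n)$, and then, under the additional hypotheses of the second claim, the subadditivity of the norm and the local integrability of elements of the space, which are the two extra requirements of a ball Banach function space. The whole argument proceeds by transferring structural properties of $X$ annulus by annulus via the defining series. Axioms (i)--(iii) transfer immediately: if $\|f\|_{\dot{K}^{\alpha,p}_X}=0$ then every $\|f\chi_k\|_X$ vanishes and hence $f=0$ a.e.; the monotonicity axiom of $X$ together with monotonicity of the $\ell^p$-norm on non-negative sequences gives (ii); and the Fatou-type property of $X$ together with monotone convergence for sums gives (iii).

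The core step is axiom (iv). Given any ball $B=B(x_0,r)$, set $R=|x_0|+r$ and choose $k_0$ with $B(0,R)\subseteq B_{k_0}$, so $\chi_B\chi_k=0$ whenever $k>k_0$. For the non-homogeneous space this already leaves only finitely many non-zero terms in the defining series, so $\chi_B\in K^{\alpha,p}_X$ automatically. For the homogeneous space, Lemma~\ref{L-2-5} applied to each pair $B_k\subseteq B_{k_0}$ with $k\leq k_0$ yields $\|\chi_{C_k}\|_X\leq\|\chi_{B_k}\|_X\lesssim 2^{kn\delta_X}$, whence
\[
\|\chi_B\|_{\dot{K}^{\alpha,p}_X}^p \;\lesssim\; \sum_{k\leq k_0} 2^{kp(\alpha+n\delta_X)},
\]
a geometric series that is finite precisely because $\alpha+n\delta_X>0$. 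The $p=\infty$ version is the analogous bounded-supremum argument.

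For the ball Banach function space claim, assume additionally $1\leq p<\infty$ and $\alpha<n\delta_{X'}$. The triangle inequality of $\|\cdot\|_{\dot{K}^{\alpha,p}_X}$ follows by applying the triangle inequality of $X$ annulus-wise and then Minkowski's inequality on $\ell^p$. For the local integrability estimate, fix a ball $B$, write $\int_B|f|=\sum_{k}\int_{B\cap C_k}|f|$, apply Lemma~\ref{L-2-2} on each annulus to get $\int_{B\cap C_k}|f|\leq\|f\chi_k\|_X\|\chi_{B\cap C_k}\|_{X'}$, and then apply H\"older's inequality on $\ell^p$ with the weights $2^{k\alpha}$ and $2^{-k\alpha}$; the result is the bound
\[
\int_B|f| \;\leq\; \|f\|_{\dot{K}^{\alpha,p}_X}\,\Bigl(\sum_{k\leq k_0} 2^{-kp'\alpha}\|\chi_{B\cap C_k}\|_{X'}^{p'}\Bigr)^{1/p'}.
\]
Since $X'$ is a ball Banach function space (Remark~\ref{R-2-2}), Lemma~\ref{L-2-5} applied to $X'$ gives $\|\chi_{C_k}\|_{X'}\lesssim 2^{kn\delta_{X'}}$ for $k\leq k_0$, and the second factor is dominated by $\sum_{k\leq k_0}2^{kp'(n\delta_{X'}-\alpha)}$, a geometric series convergent exactly because $\alpha<n\delta_{X'}$. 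The main obstacle is the calibration of these two tail sums at $k\to-\infty$: the competition between the annulus weight $2^{kp\alpha}$ and the growth $\|\chi_{B_k}\|_X\sim 2^{kn\delta_X}$ (respectively $\|\chi_{B_k}\|_{X'}\sim 2^{kn\delta_{X'}}$) produces exactly the two threshold conditions in the hypothesis; everything else is a routine transfer from $X$ to the Herz-type space.
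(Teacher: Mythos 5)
Your proposal is correct and follows essentially the same route as the paper's own proof: axiom (iv) is reduced via Lemma \ref{L-2-5} to the convergence of the geometric series $\sum_{k\le k_0}2^{kp(\alpha+n\delta_X)}$, and the local-integrability bound comes from H\"older's inequality on $X$ annulus by annulus followed by H\"older on $\ell^p$ with the weights $2^{\pm k\alpha}$, with $\alpha<n\delta_{X'}$ controlling the tail exactly as in the paper's estimate of $\|\chi_{B_l}\|_{\dot{K}^{-\alpha,p'}_{X'}}$. The only cosmetic differences are that you treat general balls directly rather than the $B_l$, observe that axiom (iv) is automatic for the non-homogeneous space, and merge the paper's separate cases $p=1$ and $1<p<\infty$ into one H\"older argument.
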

%\begin{lemma}\label{L-well}
%	Let ~$0<p<\infty, 1<\vec{q}\leq\infty$. Then for any $l\in\mathbb{Z}$, we have
%	(a) $\chi_{B_l}\in \MK$,	
%	(b) $\chi_{B_l}\in \mK$,
%	if $\alpha+\sum_{i=1}^n\frac{1}{q_i}>0$. 
%\end{lemma}
\begin{proof}
	We only prove the results for $\dot{K}^{\alpha,p}_X(\bR^n)$, and the other one can be proved in a similar way. 
	%In addition, we just prove the case $0<p<\infty$, since the result is obvious for $p=\infty$.
	
	To prove $\dot{K}^{\alpha,p}_X(\bR^n)$ is a ball quasi-Banach function space with $0<p\leq\infty$ and $\alpha+n\delta_X>0$, we just need to show $\chi_{B_l}\in \dot{K}^{\alpha,p}_X(\bR^n)$ for any $l\in\mathbb{Z}$, since the other conditions are easy to verify. 
	
	Without loss of generality, we may assume $l>0$. When $0<p<\infty$, by using Lemma \ref{L-2-5} and the fact $\alpha+n\delta_X>0$, we have
	\begin{align*}
		\|\chi_{B_l}\|_{\dot{K}^{\alpha,p}_{X}}=&\l\{\sum_{k\in\mathbb{Z}}2^{kp\alpha}\|\chi_{B_l}\chi_{k}\|^{p}_{X}\r\}^{1/p}
		=\l\{\sum_{k=-\infty}^{l}2^{kp\alpha}\|\chi_{k}\|^{p}_{X}\r\}^{1/p}\\
		\lesssim&\l\{\sum_{k=-\infty}^{-1}2^{kp\alpha}\|\chi_{k}\|^{p}_{X}\r\}^{1/p}+\l\{\sum_{k=0}^{l}2^{kp\alpha}\|\chi_{k}\|^{p}_{X}\r\}^{1/p}\\
		\lesssim& \l\{\sum_{k=-\infty}^{-1}2^{kp\alpha}2^{knp\delta_X}\|\chi_{0}\|^{p}_{X}\r\}^{1/p}+\|\chi_l\|_{X}\\
		\lesssim& \l(\frac{2^{-p(\alpha+n\delta_X)}}{1-2^{-p(\alpha+n\delta_X)}}\r)^{1/p}\|\chi_0\|_{X}
		+\|\chi_l\|_{X}<\infty.
	\end{align*}
	 When $p=\infty$, in view of Lemma \ref{L-2-5}, we get
	 	\begin{align*}
	 \|\chi_{B_l}\|_{\dot{K}^{\alpha,p}_{X}}=&\sup_{k\in\mathbb{Z}}2^{k\alpha}\|\chi_{B_l}\chi_{k}\|_{X}
	 =\sup_{k\leq l}2^{k\alpha}\|\chi_{k}\|_{X}\\
	  \leq&\sup_{k\leq l}2^{k\alpha}\frac{\|\chi_{B_k}\|_{X}}{\|\chi_{B_l}\|_{X}}{\|\chi_{B_l}\|_{X}}\lesssim 2^{k\alpha} 2^{(k-l)n\delta_X}\|\chi_{B_l}\|_{X}\\
	 \lesssim& 2^{-ln\delta_X}\|\chi_{B_l}\|_{X}\sup_{k\leq l}2^{k(\alpha+n\delta_X)}\sim 2^{l\alpha}\|\chi_{B_l}\|_{X}<\infty,
	 \end{align*}
	 since $\alpha+n\delta_X>0$.
	 
%\begin{eqnarray*}
%	\|\chi_{l}\|_{\dot{K}^{\alpha,p}_{X}}&=&\l\{\sum_{k\in\mathbb{Z}}2^{kp\alpha}\|\chi_{l}\chi_{k}\|^{p}_{X}\r\}^{1/p}
%	=\l\{\sum_{k=-\infty}^{l}2^{kp\alpha}\|\chi_{k}\|^{p}_{X}\r\}^{1/p}\\
%	&\lesssim&\l\{\sum_{k=-\infty}^{-1}2^{kp\alpha}\|\chi_{k}\|^{p}_{X}\r\}^{1/p}+\l\{\sum_{k=0}^{l}2^{kp\alpha}\|\chi_{k}\|^{p}_{X}\r\}^{1/p}\\
%	&\lesssim& \l\{\sum_{k=-\infty}^{-1}2^{kp\alpha}2^{knp\delta_X}\|\chi_{0}\|^{p}_{X}\r\}^{1/p}+\frac{1-2^{(l+1)p\alpha}}{1-2^{p\alpha}}\|\chi_l\|_{X}\\
%	&\lesssim& \l(\frac{2^{-p(\alpha+n\delta_X)}}{1-2^{-p(\alpha+n\delta_X)}}\r)^{1/p}\|\chi_0\|_{X}
%	+\l(\frac{1-2^{(l+1)p\alpha}}{1-2^{p\alpha}}\r)^{1/p}\|\chi_l\|_{X}<\infty.
%\end{eqnarray*}
	If further $1\leq p<\infty$, then $\dot{K}^{\alpha,p}_X(\bR^n)$ is a Banach space. 
	To show  $\dot{K}^{\alpha,p}_X(\bR^n)$ is a ball quasi-Banach function space with $1\leq p<\infty$ and $-n\delta_{X}<\alpha<n\delta_{X'}$, we also need to show  $\int_{B_{l}}|f(x)|dx\leq C(B_{l})\|f\|_X$ for any $l\in \mathbb{Z}$
	and $f\in \dot{K}^{\alpha,p}_X(\bR^n)$. 
	We now consider two different cases.
	
	Case I: $p=1$. For any $l\in \mathbb{Z}$, we have $\chi_{B_l}\in \dot{K}^{-\alpha,1}_{X'}(\bR^n)$ since $-\alpha+n\delta_{X'}>0$. This observation implies that 
		\begin{eqnarray*} 
			\sum_{k=\infty}^l 2^{-k\alpha}\|\chi_k\|_{X'}\leq \|\chi_{B_l}\|_{\dot{K}^{-\alpha,1}_{X'}}
				\end{eqnarray*} 
	for any fixed $l\in \mathbb{Z}$. Therefore, we obtain for $-\infty<k\leq l$,
	\begin{eqnarray}\label{G-jia}
		\|\chi_k\|_{X'}\leq 2^{k\alpha} \|\chi_{B_l}\|_{\dot{K}^{-\alpha,1}_{X'}},
	\end{eqnarray} 
	where $l$ is any fixed integer.
	
	By using (\ref{G-jia}) and H{\"o}lder's inequality on ball Banach function spaces, for any $l\in \mathbb{Z}$, we get
	\begin{align*} 
		\int_{B_{l}}|f(x)|dx=&\int_{\bR^n}|f(x)|\chi_{B_l}(x)dx
		=\sum_{k\in\mathbb{Z}}\int_{\chi_k}|f(x)|\chi_{B_l}(x)dx\\
		=& \sum_{k=-\infty}^l\int_{\chi_k}|f(x)|\chi_{B_l}(x)dx
		\leq \sum_{k=-\infty}^l\|f\chi_k\|_{X}\|\chi_k\|_{X'}\\
		\leq& 
		\sum_{k=-\infty}^l2^{k\alpha}\|f\chi_k\|_{X}\times \|\chi_{B_l}\|_{\dot{K}^{-\alpha,1}_{X'}}\\
		\leq& 
		\|\chi_{{B_l}}\|_{\dot{K}^{-\alpha,1}_{X'}}\|f\|_{\dot{K}^{\alpha,1}_{X}},
	\end{align*} 
	
	Case II: $1<p<\infty$. In this case,
	 let $p'$ satisfy $1/p+1/p'=1$. By using H{\"o}lder's inequality, for any $l\in \mathbb{Z}$, we have 
	\begin{align*} 
		\int_{B_{l}}|f(x)|dx\leq& \sum_{k=-\infty}^l\|f\chi_k\|_{X}\|\chi_k\|_{X'}\\
		\leq&
		 \l(\sum_{k=-\infty}^l2^{kp\alpha}\|f\chi_k\|^p_{X}\r)^{1/p}\l(\sum_{k=-\infty}^l2^{-kp'\alpha}\|\chi_k\|^{p'}_{X'}\r)^{1/p'}\\
		 \leq& 
		 \|\chi_{{B_l}}\|_{\dot{K}^{-\alpha,p'}_{X'}}\|f\|_{\dot{K}^{\alpha,p}_{X}},
		\end{align*} 
	where we have used the fact $\|\chi_{B_l}\|_{\dot{K}^{-\alpha,p'}_{X'}}<\infty$ since $-\alpha+n\delta_{X'}>0$.
	
	The proof is finished by combining the above two cases.
\end{proof}

\begin{remark}\label{R-2-4}
	In the following sections, we will only consider the homogeneous Herz-type spaces associated with ball Bananch funcition spaces since all the results obtained in this paper are still true for non-homogeneous Herz-type spaces associated with ball Bananch funcition spaces.
\end{remark}

\section{Boundedness of Hardy-Littlewood maximal oeprator on $\dot{K}^{\alpha,p}_X(\bR^n)$}
Our main result in this section is to establish the boundedness of Hardy-Littlewood maximal operator $M$ on $\dot{K}^{\alpha,p}_{X}(\bR^n)$. In order to show $M$ is well defined on $\dot{K}^{\alpha,p}_{X}(\bR^n)$, we first establish the dual spaces of $\dot{K}^{\alpha,p}_{X}(\bR^n)$.

As we know, the dual space of the classical Herz space $\dot{K}^{\alpha,p}_q(\bR^n)$ is also a Herz space $\dot{K}^{-\alpha,p'}_{q'}(\bR^n)$ [\cite{hernandez1999interpolation}], see also [\cite{wei-2021,wei2021extrapolation}] for the dual spaces of mixed Herz spaces and product Herz spaces. Inspired by [\cite{hernandez1999interpolation,wei-2021,wei2021extrapolation}], we will show that the dual space of $\dot{K}^{\alpha,p}_{X}(\bR^n)$ is also a Herz-type space associated with ball Banach function spaces under some mild conditions. To do so, we borrow some ideas from Hernandez and Yang [\cite{hernandez1999interpolation}].

Let $\mathbb{\dot{A}}=\{A_{j}\}_{j=-\infty}^\infty$ and $\mathbb{A}=\{A_{j}\}_{j=0}^\infty$, where $A_{j}$ are Banach spaces. For $\alpha\in \bR$ and
$0<p\leq \infty$, we define
$$\dot{l}^{\alpha}_p(\mathbb{\dot{A}})=\l\{a: a=\{a_{j}\}_{j=-\infty}^\infty, a_{j}\in A_{j}, 
\l(\sum_{j=-\infty}^\infty\l(2^{j\alpha}\|a_{j}\|_{A_{j}}\r)^p\r)^{1/p}<\infty\r\},$$
and
$$l^{\alpha}_p(\mathbb{A})=\l\{a: a=\{a_{j}\}_{j=0}^\infty, a_{j}\in A_{j}, 
\l(\sum_{j=0}^\infty\l(2^{j\alpha}\|a_{j}\|_{A_{j}}\r)^p\r)^{1/p}<\infty\r\},$$
where the usual modifications are made when $p=\infty$. Moreover,
$$\dot{C}^{\alpha}_0=\l\{a: a\in \dot{l}^{\alpha}_{\infty}(\mathbb{\dot{A}}), ~and~ 2^{j\alpha}\|a_{j}\|_{A_{j}}\rightarrow 0~ as~ |j|\rightarrow\infty\r\},$$
and
$$C^{\alpha}_0=\l\{a: a\in l^{\alpha}_{\infty}(\mathbb{A}), ~and~ 2^{j\alpha}\|a_{j}\|_{A_{j}}\rightarrow 0~ as~ j\rightarrow\infty\r\}.$$
Let $X^*$ denote the dual space of $X$, $\mathbb{\dot{A}}^*=\l\{A^*_{j}\r\}_{j=-\infty}^\infty$ and
$\mathbb{A}^*=\l\{A^*_{j}\r\}_{j=0}^\infty$.

For $0<p\leq\infty$, we denote $p'$ here satisfies $1/p+1/p'=1$ if $1<p\leq\infty$ and $p'=\infty$ if $0<p\leq1$. For all $f=\{f_{j}\}_{j=-\infty}^\infty\in \dot{l}^{-\alpha}_{p'}(\mathbb{\dot{A}^*})$ and 
$a=\{a_{j}\}_{j=-\infty}^\infty\in \dot{l}^{\alpha}_{p}(\mathbb{\dot{A}})$, we define 
$$f(a)=\sum_{j=-\infty}^\infty f_{j}(a_{j}),$$
which is a bounded linear functional over $\dot{l}^{\alpha}_{p}(\mathbb{\dot{A}})$ and satisfies 
$$\|f\|_{\dot{l}^{\alpha}_{p}(\mathbb{\dot{A}})}\leq\|f\|_{\dot{l}^{-\alpha}_{p'}(\mathbb{\dot{A}}^*)}.$$
Similar results hold for $l^{\alpha}_{p}(\mathbb{A})$ and $l^{-\alpha}_{p'}(\mathbb{A}^*)$.
Moreover, following [\cite{hernandez1999interpolation}] line by line, one can get the following similar result.
\begin{theorem}\label{abs}
	Let $\mathbb{\dot{A}}$ and $\mathbb{\dot{A}}^*$ be as above, $\alpha\in \bR$,  $0<p\leq\infty$ and  $1/p+1/p'=1$, where $p'=\infty$ if $0<p\leq1$. Then $\l(\dot{l}^{\alpha}_{p}(\mathbb{\dot{A}})\r)^*=\dot{l}^{-\alpha}_{p'}(\mathbb{\dot{A}}^*)$,  $\l(\dot{C}^{\alpha}_{0}(\mathbb{\dot{A}})\r)^*=\dot{l}^{-\alpha}_{1}(\mathbb{\dot{A}}^*)$,
	$\l(l^{\alpha}_{p}(\mathbb{A})\r)^*=l^{-\alpha}_{p'}(\mathbb{A}^*)$, and $\l(C^{\alpha}_{0}(\mathbb{A})\r)^*=l^{-\alpha}_{1}(\mathbb{A}^*)$.
\end{theorem}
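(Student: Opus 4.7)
The plan is to prove the four duality identifications by the same general scheme, treating $\l(\dot{l}^{\alpha}_{p}(\mathbb{\dot{A}})\r)^*=\dot{l}^{-\alpha}_{p'}(\mathbb{\dot{A}}^*)$ as the model case; the other three are cosmetic variants. One inclusion is already recorded in the paragraph preceding the statement: any sequence $f=\{f_j\}\in\dot{l}^{-\alpha}_{p'}(\mathbb{\dot{A}}^*)$ defines, via $a\mapsto\sum_j f_j(a_j)$, a bounded linear functional whose norm is controlled by $\|f\|_{\dot{l}^{-\alpha}_{p'}(\mathbb{\dot{A}}^*)}$. The proof is a direct application of $|f_j(a_j)|\leq\|f_j\|_{A_j^*}\|a_j\|_{A_j}$ followed by Hölder's inequality on the scalar sequences $\{2^{-j\alpha}\|f_j\|_{A_j^*}\}$ and $\{2^{j\alpha}\|a_j\|_{A_j}\}$ with exponents $p'$ and $p$ (with the $\ell^\infty$--$\ell^1$ pairing taking the place of Hölder when $0<p\leq 1$).

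The substantive direction is the converse: given a continuous linear functional $F$ on $\dot{l}^{\alpha}_{p}(\mathbb{\dot{A}})$, construct a representing sequence $f=\{f_j\}\in\dot{l}^{-\alpha}_{p'}(\mathbb{\dot{A}}^*)$ with $\|f\|_{\dot{l}^{-\alpha}_{p'}(\mathbb{\dot{A}}^*)}\leq\|F\|$. For each $j\in\mathbb{Z}$ and $a_j\in A_j$, denote by $e_j(a_j)$ the sequence with $a_j$ in slot $j$ and zero elsewhere; then $e_j(a_j)\in\dot{l}^{\alpha}_p(\mathbb{\dot{A}})$ with norm $2^{j\alpha}\|a_j\|_{A_j}$, so $a_j\mapsto F(e_j(a_j))$ is a bounded linear functional on $A_j$ of norm at most $2^{j\alpha}\|F\|$. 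Let $f_j\in A_j^*$ be its Riesz-type representative. This already proves the case $p=1$, since testing against a single coordinate gives $2^{-j\alpha}\|f_j\|_{A_j^*}\leq\|F\|$, which is the $\dot{l}^{-\alpha}_\infty$ norm.

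For $1<p<\infty$ the $\dot{l}^{-\alpha}_{p'}$-norm bound is extracted by the standard duality-with-weights trick: fix a finite $\Lambda\subset\mathbb{Z}$ and $\varepsilon>0$, choose $u_j\in A_j$ with $\|u_j\|_{A_j}=1$ and $f_j(u_j)\geq(1-\varepsilon)\|f_j\|_{A_j^*}$ for each $j\in\Lambda$, and test $F$ against the sequence $a_j=\operatorname{sgn} u_j\cdot c_j u_j$ where the scalar weights $c_j\geq 0$ are chosen (up to a normalisation) as $c_j=2^{-jp'\alpha}\|f_j\|_{A_j^*}^{p'-1}$, so that Hölder is saturated. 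The inequality $F(a)\leq\|F\|\,\|a\|_{\dot{l}^\alpha_p(\mathbb{\dot{A}})}$ then rearranges to
\[
\l(\sum_{j\in\Lambda}2^{-jp'\alpha}\|f_j\|_{A_j^*}^{p'}\r)^{1/p'}\leq\frac{1}{1-\varepsilon}\|F\|,
\]
and letting $\Lambda\uparrow\mathbb{Z}$ and $\varepsilon\downarrow 0$ gives the desired bound. Uniqueness of the representation is immediate because $F$ is determined on the dense subspace of finitely supported sequences.

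For the space $\dot{C}^{\alpha}_0(\mathbb{\dot{A}})$ the conjugate exponent is $p'=1$, the finitely supported sequences are genuinely dense (this is the only place where $\dot{C}^\alpha_0$ differs from the full $\dot{l}^\alpha_\infty$), and the estimate $\sum_j 2^{-j\alpha}\|f_j\|_{A_j^*}\leq\|F\|$ follows by the same test-vector argument applied to finite truncations. The two non-homogeneous statements are obtained verbatim with the index set $\mathbb{Z}$ replaced by $\mathbb{N}$. The main technical obstacle is precisely the weighted Hölder saturation step above: picking a single sequence that is simultaneously nearly optimal in each coordinate and whose $\dot{l}^\alpha_p$ norm cleanly matches the $\dot{l}^{-\alpha}_{p'}$ quantity we wish to bound, while carrying the weights $2^{\pm j\alpha}$ through correctly; everything else is functional-analytic bookkeeping inherited from the scalar Hernandez--Yang argument.
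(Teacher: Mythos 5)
Your argument is the standard one and, for $0<p<\infty$ together with the two $\dot C^\alpha_0$/$C^\alpha_0$ statements, it is correct and complete in outline: the ``easy'' inclusion by coordinatewise pairing plus H\"older (or the $\ell^p\hookrightarrow\ell^1$ embedding when $0<p\le 1$), the representation of a functional by restricting to the coordinate embeddings $e_j$, the norm bound via the weighted H\"older--saturation test sequence $c_j=2^{-jp'\alpha}\|f_j\|_{A_j^*}^{p'-1}$ (your exponent bookkeeping checks out: $p(p'-1)=p'$ and $jp\alpha-jpp'\alpha=-jp'\alpha$), and density of finitely supported sequences to identify $F$ with $\sum_j f_j(\cdot)$. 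The paper itself offers no proof at all --- it simply defers to Hern\'andez--Yang --- and what you wrote is essentially that argument, so there is nothing to contrast on the level of method. The one point you should flag explicitly is the endpoint $p=\infty$, which the theorem as stated formally includes in the first identity: there your scheme breaks down exactly where you say it does (finitely supported sequences are not dense in $\dot l^\alpha_\infty(\mathbb{\dot A})$), and indeed $\bigl(\dot l^\alpha_\infty(\mathbb{\dot A})\bigr)^*=\dot l^{-\alpha}_1(\mathbb{\dot A}^*)$ is false in general --- this is precisely why the predual space $\dot C^\alpha_0$ is introduced. So either $p=\infty$ must be excluded from the first identity (as in the source the paper cites) or that case must be read as the $\dot C^\alpha_0$ statement; your proof silently covers only the corrected version. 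Two cosmetic remarks: the map $a_j\mapsto F(e_j(a_j))$ \emph{is} the element $f_j\in A_j^*$, so no ``Riesz-type representative'' is needed; and the factor $\operatorname{sgn}u_j$ should be a unimodular scalar chosen so that $f_j(a_j)\ge 0$, not applied to $u_j$ pointwise.
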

We refer the reader to [\cite{hernandez1999interpolation}] for the details of the proof.

Before stating the dual results for Herz-type spaces associated with ball Banach function spaces, the following definition is needed.
\begin{definition}
	Let $\alpha\in\bR$ and $0<q\leq\infty$.
	Then a function $f\in L^{X}_{{\rm loc}}(\bR^n\backslash\{0\})$ is said to belong to the space $\l(\dot{K}^{-\alpha,\infty}_{X}\r)_0(\bR^n)$ if $f\in \dot{K}^{\alpha,\infty}_{X}(\bR^n)$ and $2^{k\alpha}\|f\chi_{k}\|_{X}\rightarrow0$ as $|k|\rightarrow\infty$.
	
\end{definition}
Theorem \ref{abs} gives us the dual spaces of Herz-type spaces associated with ball Banach function spaces.
\begin{theorem}\label{HD-1}
	Let $\alpha\in\bR, 0<p\leq\infty$ and $1/p+1/p'=1$, where $p'=\infty$ if $0<p\leq1$. Suppose $X$ is a ball Banach function space such that $X'=X^*$. Then 
	$$\l(\dot{K}^{\alpha,p}_X(\bR^n)\r)^*=\dot{K}^{-\alpha,p'}_{X'}(\bR^n),~~
	\l(\l(\dot{K}^{-\alpha,\infty}_{X}\r)_0(\bR^n)\r)^*=\dot{K}^{\alpha,1}_{X'}(\bR^n).$$
\end{theorem}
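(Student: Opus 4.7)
The plan is to reduce the dual problem on $\dot{K}^{\alpha,p}_X(\bR^n)$ to the abstract vector-valued sequence-space duality already summarized in Theorem \ref{abs}. For each $k\in\mathbb{Z}$, I would introduce the Banach space
\[
A_k:=\{g\in X: g=g\chi_{C_k}\text{ a.e.}\}
\]
equipped with the norm inherited from $X$; it is a closed subspace of $X$ since $X$ is a ball Banach function space. The collection $\mathbb{\dot A}=\{A_k\}_{k\in\mathbb{Z}}$ is then the natural fiber sequence in which Herz-type elements live.

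The first step is to establish an isometric isomorphism
\[
\Phi:\dot{K}^{\alpha,p}_X(\bR^n)\longrightarrow \dot{l}^{\alpha}_p(\mathbb{\dot A}),\qquad \Phi(f)=(f\chi_k)_{k\in\mathbb{Z}}.
\]
That $\Phi$ is an isometry is immediate from comparing Definition \ref{D-2-1-1} with the definition of $\dot{l}^{\alpha}_p$; surjectivity follows because the components $g_k\in A_k$ have pairwise disjoint supports, so $f:=\sum_k g_k$ is defined a.e., belongs to $L^X_{\mathrm{loc}}(\bR^n\setminus\{0\})$, and satisfies $\Phi(f)=(g_k)_k$. An analogous identification should hold between $(\dot K^{-\alpha,\infty}_X)_0(\bR^n)$ and $\dot C^{-\alpha}_0(\mathbb{\dot A})$ once the (apparent) sign convention in the definition is sorted out.

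The second step, which I expect to be the main technical point, is to identify $A_k^*$ with
\[
A_k':=\{g\in X': g=g\chi_{C_k}\text{ a.e.}\}
\]
isometrically. For any $g\in A_k'$ the functional $L_g(f):=\int fg$ is bounded on $A_k$ by Lemma \ref{L-2-2}, with $\|L_g\|_{A_k^*}\le\|g\|_{X'}$. Conversely, given $L\in A_k^*$, Hahn--Banach extends $L$ to an element of $X^*$, and the hypothesis $X^*=X'$ produces some $h\in X'$ with $L(f)=\int fh$; replacing $h$ by $h\chi_{C_k}\in A_k'$ does not change $L$. The equality of norms is obtained by rewriting the associate-norm supremum:
\[
\|g\|_{X'}=\sup_{\|\varphi\|_X\le 1}\int|\varphi g|=\sup_{\|\varphi\|_X\le 1}\int|\varphi\chi_{C_k}\,g|\le\|L_g\|_{A_k^*},
\]
since $\varphi\chi_{C_k}\in A_k$ with norm at most one. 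Thus $\mathbb{\dot A}^*$ is isometrically $\{A_k'\}_{k\in\mathbb Z}$, and the analogous map $\Phi'$ identifies $\dot l^{-\alpha}_{p'}(\mathbb{\dot A}^*)$ with $\dot K^{-\alpha,p'}_{X'}(\bR^n)$ and $\dot l^{-\alpha}_1(\mathbb{\dot A}^*)$ with $\dot K^{\alpha,1}_{X'}(\bR^n)$.

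Finally, invoking Theorem \ref{abs} gives $\bigl(\dot l^{\alpha}_p(\mathbb{\dot A})\bigr)^*=\dot l^{-\alpha}_{p'}(\mathbb{\dot A}^*)$ and $\bigl(\dot C^{-\alpha}_0(\mathbb{\dot A})\bigr)^*=\dot l^{\alpha}_1(\mathbb{\dot A}^*)$, and composing with the identifications $\Phi$ and $\Phi'$ established above yields the two duality statements of Theorem \ref{HD-1}. The only genuinely delicate point is the isometric identification $A_k^*\cong A_k'$, because this is precisely where the assumption $X^*=X'$ is needed; once this is in place, the rest is a transfer of the abstract result in Theorem \ref{abs} through the obvious annular decomposition.
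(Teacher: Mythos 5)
Your proposal is correct and is essentially the paper's argument: the paper proves Theorem \ref{HD-1} by exactly this reduction to the abstract duality of Theorem \ref{abs}, taking the fibers $A_k$ to be the subspaces of $X$ supported on the annuli $C_k$ and using $X^*=X'$ to identify $A_k^*$ with the corresponding subspace of $X'$. Your write-up in fact supplies more detail (the closedness of $A_k$, the Hahn--Banach step, and the isometry of norms) than the paper, which leaves these verifications to the reader.
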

\begin{remark}\label{R-3-1}
	It is worthy mentioning that the condition $X'=X^*$ is satisfied for a large class of ball Banach function spaces. In fact, it was proved in [\cite{bennett1988interpolation}, Chapter 1, Corollary 4.3] that a Banach function space $X$ has an absolutely continuous norm if and only if $X'=X^*$. It is not hard to varify that this conclusion also holds for ball Banach function spaces. As is known to all, many function spaces, such as Lebesgue spaces, variable exponent Lebesgue spaces, mixed-norm Lebesgue spaces, Lorentz spaces and Orlicz spaces have absolutely continuous norms under some mild conditions, and therefore, they satisfy $X'=X^*$.
\end{remark}
Notice that if $1\leq p<\infty$, the space $\dot{K}^{\alpha,p}_X(\bR^n)$ is a Banach space. By using the closed-graph theorem, on can prove the following result.
\begin{coro}\label{c-dual}
	Let  $\alpha\in\bR, 1\leq p<\infty$, and $1/p+1/p'=1$. Suppose $X$ is a ball Banach function space such that $X'=X^*$. Then $f\in \dot{K}^{\alpha,p}_X(\bR^n)$ if and only if 
	$$\l|\int_{\bR^n} f(x)g(x)dx\r|<\infty$$
	for every $g\in \dot{K}^{-\alpha,p'}_{X'}(\bR^n)$, and in this case we have
	\begin{equation}
	\|f\|_{\dot{K}^{\alpha,p}_{X}}=\sup\l\{\l|\int_{\bR^n} f(x)g(x)dx\r|: \|g\|_{\dot{K}^{-\alpha,p'}_{X'}}\leq1\r\}.
	\end{equation}
\end{coro}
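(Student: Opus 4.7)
The plan is to establish the norm identity directly, from which the ``if and only if'' characterization follows as an immediate consequence. For the easy bound, namely $\sup_{\|g\|_{\dot{K}^{-\alpha,p'}_{X'}}\leq 1}|\int fg|\leq\|f\|_{\dot{K}^{\alpha,p}_{X}}$ together with the forward implication that $f\in\dot{K}^{\alpha,p}_X(\bR^n)$ makes $|\int fg|<\infty$ for every admissible $g$, I would apply H\"older's inequality twice: within each annulus $C_k$, Lemma \ref{L-2-2} gives $\int_{C_k}|fg|dx\leq\|f\chi_k\|_X\|g\chi_k\|_{X'}$; after inserting the factors $2^{k\alpha}\cdot 2^{-k\alpha}$ and summing over $k\in\mathbb{Z}$, the discrete $\ell^p$--$\ell^{p'}$ H\"older inequality produces the bound $\|f\|_{\dot{K}^{\alpha,p}_X}\|g\|_{\dot{K}^{-\alpha,p'}_{X'}}$.

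For the converse direction, I would introduce the linear functional $T_f(g)=\int fg\,dx$ on the Banach space $\dot{K}^{-\alpha,p'}_{X'}(\bR^n)$ (which is indeed a Banach space because $p'\geq 1$ and the associate space $X'$ is a ball Banach function space whose norm satisfies the triangle inequality). By hypothesis $T_f$ is defined on all of $\dot{K}^{-\alpha,p'}_{X'}(\bR^n)$, so the closed graph theorem reduces its boundedness to checking that its graph is closed. Concretely, if $g_n\to g$ in $\dot{K}^{-\alpha,p'}_{X'}$ and $T_f(g_n)\to c$, Herz-type norm convergence forces $\|(g_n-g)\chi_k\|_{X'}\to 0$ for every fixed $k$; extracting a subsequence that converges pointwise a.e.\ on each annulus and combining Fatou's lemma with the $k$-wise H\"older estimate $\int_{C_k}|f(g_n-g)|\leq\|f\chi_k\|_X\|(g_n-g)\chi_k\|_{X'}$ should yield $T_f(g)=c$. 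The closed graph theorem then delivers a constant $C_f$ with $|T_f(g)|\leq C_f\|g\|_{\dot{K}^{-\alpha,p'}_{X'}}$ for every $g$.

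To upgrade this boundedness to the sharp identity $C_f=\|f\|_{\dot{K}^{\alpha,p}_X}$, I would invoke Lemma \ref{L-2-1} to write $\|f\chi_k\|_X=\sup_{\|h\|_{X'}\leq 1}|\int f\chi_k\cdot h|$. Given $\varepsilon>0$, I would pick near-optimal $h_k\in X'$ realizing this supremum on each annulus and combine them with coefficients extracted from $\ell^p$--$\ell^{p'}$ duality (with $\sup_k$ replacing the $\ell^{p'}$-sum in the edge case $p=1$) to assemble a test function $g_\varepsilon$ of the form $\sum_k 2^{k\alpha}c_k h_k\chi_k$ with $\|g_\varepsilon\|_{\dot{K}^{-\alpha,p'}_{X'}}\leq 1$ and $|T_f(g_\varepsilon)|\geq(1-\varepsilon)\|f\|_{\dot{K}^{\alpha,p}_X}$. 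This forces $\|f\|_{\dot{K}^{\alpha,p}_X}\leq C_f<\infty$, establishing both the membership $f\in\dot{K}^{\alpha,p}_X(\bR^n)$ and the claimed norm identity.

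The principal obstacle I anticipate is the closed graph verification, specifically ensuring that norm convergence in $\dot{K}^{-\alpha,p'}_{X'}$ is strong enough to be passed through the integral $\int fg_n$ when no a priori bound on $f$ in $\dot{K}^{\alpha,p}_X$ is available. This hinges on the intrinsic property of ball Banach function spaces that norm convergence on bounded sets implies convergence in measure, yielding an a.e.\ convergent subsequence. The construction of the extremal $g_\varepsilon$ in the final step is standard duality bookkeeping and should not present conceptual difficulty.
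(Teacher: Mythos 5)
Your overall architecture---H\"older on each annulus plus discrete $\ell^p$--$\ell^{p'}$ H\"older for the easy inequality, the closed graph theorem for the qualitative converse, and a near-extremal test function built from $\|f\chi_k\|_X=\|f\chi_k\|_{X''}$ (Lemma \ref{L-2-1}) for the sharp norm identity---is sound and is essentially the route the paper intends; the paper offers no written proof beyond the remark that the result follows from the closed graph theorem. Your extremal construction also has the virtue of needing only $X''=X$, which holds for every ball Banach function space, rather than an identification of $\bigl(\dot{K}^{-\alpha,p'}_{X'}(\bR^n)\bigr)^*$, which would require the additional hypothesis $(X')^*=(X')'$ not assumed in the statement.

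However, the closed-graph verification as you describe it does not go through. Norm convergence $g_n\to g$ in $\dot{K}^{-\alpha,p'}_{X'}(\bR^n)$ gives $\int_{C_k}|f(g_n-g)|\leq \|f\chi_k\|_X\|(g_n-g)\chi_k\|_{X'}\to 0$ for each \emph{fixed} $k$, but to conclude $\int_{\bR^n}f(g_n-g)\,dx\to 0$ you must sum over $k$, and the resulting bound is $\bigl(\sum_k 2^{k\alpha}\|f\chi_k\|_X\bigr)\|g_n-g\|_{\dot{K}^{-\alpha,p'}_{X'}}$, whose first factor is the $\dot{K}^{\alpha,1}_X$-norm of $f$---finiteness of which is not available and is strictly stronger than the conclusion when $p>1$. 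Fatou's lemma only yields one-sided estimates for $\int|f(g-g_{n_j})|$ in terms of quantities you cannot bound for the same circular reason, so it does not identify $\lim_n\int fg_n$ with $\int fg$. The standard repair is to apply the closed graph theorem not to the scalar functional but to the linear map $g\mapsto fg$ from $\dot{K}^{-\alpha,p'}_{X'}(\bR^n)$ into $L^1(\bR^n)$ (the hypothesis self-improves to $fg\in L^1(\bR^n)$ upon testing against $|g|\,\overline{\mathrm{sgn}\,f}$): there graph-closedness only requires identifying an almost-everywhere limit, which your observation---that norm convergence in a ball Banach function space implies local convergence in measure and hence an a.e.\ convergent subsequence---does supply. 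With that substitution the remainder of your argument is correct.
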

As a consequence Theorem \ref{HD-1}, we also have the following norm conjugate formula.
\begin{coro}\label{c-dual-1}
	Let  $\alpha\in\bR$, $0<p<\infty$ and $1/p+1/p'=1$, where $p'=\infty$ if $0<p\leq1$. 
	Suppose $X$ is a ball Banach function space such that $X'=X^*$. Then 
	$$\int_{\bR^n}|f(x)g(x)|dx\leq\|f\|_{{\dot{K}}^{\alpha,p}_{X}}
	\|g\|_{\dot{K}^{-\alpha,p'}_{X'}}$$
	for all $f\in \dot{K}^{\alpha,p}_{X}(\bR^n)$ and $g\in \dot{K}^{-\alpha,p'}_{X'}(\bR^n).$
\end{coro}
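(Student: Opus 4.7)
The plan is to prove the stated H\"older-type inequality by two successive applications of H\"older's inequality: first inside the ball Banach function space $X$ on each dyadic annulus $C_k$, and then in the weighted sequence space indexed by $k\in\mathbb{Z}$. Theorem \ref{HD-1} and Corollary \ref{c-dual} are not strictly necessary for this argument, although in the range $1\leq p<\infty$ they would furnish an alternative route via the duality pairing induced by integration.

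First I would decompose the integral along the annuli $\{C_k\}_{k\in\mathbb{Z}}$, using that $\bR^n\setminus\{0\}=\bigsqcup_{k\in\mathbb{Z}}C_k$ up to a null set, to write
$$\int_{\bR^n}|f(x)g(x)|\,dx=\sum_{k\in\mathbb{Z}}\int_{\bR^n}|f(x)\chi_k(x)|\,|g(x)\chi_k(x)|\,dx.$$
Since $\|f\|_{\dot{K}^{\alpha,p}_X}$ and $\|g\|_{\dot{K}^{-\alpha,p'}_{X'}}$ are finite, each $f\chi_k$ lies in $X$ and each $g\chi_k$ lies in $X'$, so Lemma \ref{L-2-2} applies on every summand and yields
$$\int_{\bR^n}|f(x)g(x)|\,dx\leq\sum_{k\in\mathbb{Z}}\bigl(2^{k\alpha}\|f\chi_k\|_X\bigr)\bigl(2^{-k\alpha}\|g\chi_k\|_{X'}\bigr).$$

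Setting $a_k:=2^{k\alpha}\|f\chi_k\|_X$ and $b_k:=2^{-k\alpha}\|g\chi_k\|_{X'}$, it then remains to bound $\sum_k a_kb_k$ by $\|f\|_{\dot{K}^{\alpha,p}_X}\|g\|_{\dot{K}^{-\alpha,p'}_{X'}}$. When $1<p<\infty$ this is immediate from classical H\"older in $\ell^p(\mathbb{Z})$. When $0<p\leq1$, so $p'=\infty$, I would instead combine the trivial bound $\sum_k a_kb_k\leq(\sup_k b_k)\sum_k a_k$ with the embedding $\|a\|_{\ell^1}\leq\|a\|_{\ell^p}$, which holds precisely because $0<p\leq1$, and still arrive at the same conclusion. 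The only mild subtlety is this case split: in the range $0<p\leq1$ the space $\ell^p$ is merely a quasi-Banach space and the standard H\"older inequality is not directly available, so the monotonicity of $\ell^p$-norms for $p\leq1$ must be invoked instead. Beyond this technicality, the proof is a straightforward layering of two H\"older-type estimates and presents no substantial obstacle.
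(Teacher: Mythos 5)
Your proof is correct, but it takes a genuinely different route from the paper's. The paper presents Corollary \ref{c-dual-1} purely as a consequence of the duality statement in Theorem \ref{HD-1} (hence the hypothesis $X'=X^*$), with no further argument written out; note that extracting the stated inequality with the absolute value \emph{inside} the integral from the abstract duality $(\dot{K}^{\alpha,p}_X)^*=\dot{K}^{-\alpha,p'}_{X'}$ would still require a small extra step, e.g.\ testing the functional against $g$ multiplied by a suitable unimodular function. Your argument instead proves the inequality directly: decompose over the annuli $C_k$, apply H\"older's inequality for ball Banach function spaces (Lemma \ref{L-2-2}) on each annulus, and then apply H\"older in the sequence variable for $1<p<\infty$, or the trivial $\ell^1$--$\ell^\infty$ bound together with the embedding $\ell^p\hookrightarrow\ell^1$ for $0<p\leq1$. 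This is more elementary and, notably, never uses the hypothesis $X'=X^*$: the associate space $X'$ and Lemma \ref{L-2-2} suffice, so your proof establishes the corollary under weaker assumptions than stated and handles the quasi-Banach range $0<p\leq1$ transparently. What the paper's route buys is uniformity with the surrounding duality results (Theorem \ref{HD-1} and Corollary \ref{c-dual}); what yours buys is self-containedness and the removal of a hypothesis.
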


Now we show that $M$ is well defined on $\dot{K}^{\alpha,p}_{X}(\bR^n)$.
\begin{lemma}\label{LM-well}
	Let $\alpha\in\bR$, $0<p<\infty$, and $X$ be a ball Banach function space such that $X'=X^*$ and $\alpha<n\delta_{X'}$, where $\delta_{X'}$ is the constant appeared in (\ref{G-2-3}). Then we have $\dot{K}^{\alpha,p}_{X}(\bR^n)\subseteq L^1_{\rm{loc}}(\bR^n)$.
\end{lemma}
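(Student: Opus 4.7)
The plan is to reduce local integrability to showing that $\int_{B_l}|f(x)|\,dx<\infty$ for every $l\in\mathbb{Z}$ and every $f\in\dot{K}^{\alpha,p}_X(\bR^n)$; once this is established, since every compact subset of $\bR^n$ sits inside some $B_l$, the inclusion $\dot{K}^{\alpha,p}_X(\bR^n)\subseteq L^1_{\rm loc}(\bR^n)$ follows. Writing $B_l\setminus\{0\}=\bigsqcup_{k\le l}C_k$ and applying H\"older's inequality for ball Banach function spaces (Lemma~\ref{L-2-2}) annulus by annulus gives
\[\int_{B_l}|f(x)|\,dx\le\sum_{k=-\infty}^{l}\|f\chi_k\|_X\,\|\chi_k\|_{X'}.\]

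Next, I would control the characteristic function norms by applying Lemma~\ref{L-2-5} to the associate space $X'$. Since $C_k\subseteq B_l$ with $|C_k|/|B_l|\sim 2^{(k-l)n}$ for $k\le l$, this yields
\[\|\chi_k\|_{X'}\lesssim 2^{(k-l)n\delta_{X'}}\|\chi_{B_l}\|_{X'},\]
where $\|\chi_{B_l}\|_{X'}$ is finite because $X'$ is itself a ball Banach function space (Remark~\ref{R-2-2}). The remaining task is to bound
\[\sum_{k\le l}2^{(k-l)n\delta_{X'}}\|f\chi_k\|_X\le C(l)\,\|f\|_{\dot{K}^{\alpha,p}_X}\]
with $C(l)$ depending only on $l$.

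I would handle this series estimate by splitting on the value of $p$. For $1\le p<\infty$, the factorization $2^{(k-l)n\delta_{X'}}\|f\chi_k\|_X=2^{-ln\delta_{X'}}2^{k(n\delta_{X'}-\alpha)}\cdot 2^{k\alpha}\|f\chi_k\|_X$, combined with H\"older's inequality with conjugate exponents $p$ and $p'$, reduces matters to the convergence of $\sum_{k\le l}2^{kp'(n\delta_{X'}-\alpha)}$, which is guaranteed by the hypothesis $\alpha<n\delta_{X'}$. For $0<p<1$, H\"older is not directly available with those exponents, so instead I would invoke the sub-additivity of $x\mapsto x^p$ on $[0,\infty)$, yielding the elementary bound $\sum_k a_k\le\bigl(\sum_k a_k^p\bigr)^{1/p}$ for non-negative sequences; applying this with $a_k=2^{(k-l)n\delta_{X'}}\|f\chi_k\|_X$ and again using $\alpha<n\delta_{X'}$ to dominate the resulting geometric series by a multiple of $2^{-lp\alpha}\|f\|_{\dot{K}^{\alpha,p}_X}^p$, one recovers the same bound after extracting the $1/p$-th root.

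The main obstacle is the sub-Banach regime $0<p<1$, where neither H\"older's inequality nor the triangle inequality are directly applicable, forcing the use of the $x\mapsto x^p$ sub-additivity trick to compare $\ell^1$- and $\ell^p$-norms. A technical point to verify at the outset is that the constant $\delta_{X'}$ is well-defined: this requires applying Lemma~\ref{L-2-5} to $X'$, whose hypotheses demand $M$ bounded on the associate $(X')'=X''$; Lemma~\ref{L-2-1} identifies $X''$ with $X$, so the existence of $\delta_{X'}$ is tacitly built into the hypothesis $\alpha<n\delta_{X'}$.
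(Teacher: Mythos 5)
Your argument is correct, but it takes a genuinely different route from the paper. The paper's proof is a two-line appeal to machinery already set up: by Corollary \ref{c-dual-1} (the norm conjugate formula, itself a consequence of the duality Theorem \ref{HD-1}) one has $\|f\chi_{B_l}\|_{L^1}\leq\|f\|_{\dot{K}^{\alpha,p}_{X}}\|\chi_{B_l}\|_{\dot{K}^{-\alpha,p'}_{X'}}$, and the second factor is finite by the computation in Proposition \ref{p-7} because $-\alpha+n\delta_{X'}>0$. You instead prove the needed bound from scratch: annulus-by-annulus H\"older on $X$ via Lemma \ref{L-2-2}, the decay $\|\chi_k\|_{X'}\lesssim 2^{(k-l)n\delta_{X'}}\|\chi_{B_l}\|_{X'}$ from Lemma \ref{L-2-5} applied to $X'$, and then discrete H\"older for $1\leq p<\infty$ or the embedding $\sum_k a_k\leq(\sum_k a_k^p)^{1/p}$ for $0<p<1$; all of these steps check out, and they amount to the content of Cases I and II in the proof of Proposition \ref{p-7} extended to the range $0<p<1$, which that proposition does not treat. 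What your route buys is self-containedness: it does not rely on Theorem \ref{HD-1}, which the paper justifies only by reference to Hern\'andez--Yang, and whose case $0<p\leq1$ (with dual $\dot{K}^{-\alpha,\infty}_{X'}$) is precisely what Corollary \ref{c-dual-1} leans on here. What the paper's route buys is brevity, since the corollary is already available. Your closing observation --- that the very mention of $\delta_{X'}$ tacitly presupposes the boundedness of $M$ on $X''=X$ via Lemma \ref{L-2-1} --- is accurate and applies equally to the paper's own usage in Proposition \ref{p-7}.
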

\begin{proof}
	From the proof of Proposition \ref{p-7}, we have 
	$\|\chi_{B_l}\|_{\dot{K}^{-\alpha,p'}_{X'}}<\infty$ since $-\alpha+n\delta_{X'}>0$, where $p'=\infty$ if $0<p\leq1$.
	
	Now for any $f\in\dot{K}^{\alpha,p}_{X}(\bR^n)$ and $l\in\mathbb{Z}$, by Corollary \ref{c-dual-1}, we have
	\begin{align*}
		\|f\chi_{B_l}\|_{L^1}\leq \|f\chi_{B_l}\|_{\dot{K}^{\alpha,p}_{X}}\|\chi_{B_l}\|_{\dot{K}^{-\alpha,p'}_{X'}}
		\leq \|f\|_{\dot{K}^{\alpha,p}_{X}}\|\chi_{B_l}\|_{\dot{K}^{-\alpha,p'}_{X'}}<\infty.
	\end{align*}
	As a consequence, we obtain $\dot{K}^{\alpha,p}_{X}(\bR^n)\subseteq L^1_{\rm{loc}}(\bR^n)$ by the arbitrariness of $l\in\mathbb{Z}$.
\end{proof}
The subsequent theorem gives the boundedness of $M$ on $\dot{K}^{\alpha,p}_{X}(\bR^n)$.
\begin{theorem}\label{T-3}
	Let  $\alpha\in\bR$, $0<p<\infty$, and $X$ be a ball Banach function space such that $X'=X^*$, the Hardy-Littlewood maximal operator $M$ is bounded on $X$ and $-n\delta_{X}<\alpha<n\delta_{X'}$, where $\delta_{X}$ and $\delta_{X'}$ are constants appeared in (\ref{G-2-3}). Then $M$ is bounded on $\dot{K}^{\alpha,p}_{X}(\bR^n)$.
\end{theorem}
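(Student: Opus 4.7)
The strategy is to adapt the classical three-region decomposition used to prove the boundedness of $M$ on Herz spaces $\dot{K}^{\alpha,p}_q(\bR^n)$, replacing $L^q$ by the ball Banach function space $X$ and using H\"older's inequality (Lemma \ref{L-2-2}) together with the size estimates from Lemmas \ref{L-2-4} and \ref{L-2-5}. Since Lemma \ref{LM-well} ensures $f\in L^1_{\rm loc}(\bR^n)$, the sublinearity of $M$ gives $Mf\leq \sum_{j\in\mathbb{Z}} M(f\chi_j)$ pointwise almost everywhere, so everything reduces to estimating the individual pieces on each annulus.

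I would fix $k\in\mathbb{Z}$, restrict to $x\in C_k$, and split the $j$-sum into three parts: the \emph{inner} regime $j\leq k-2$, the \emph{diagonal} regime $|j-k|\leq 1$, and the \emph{outer} regime $j\geq k+2$. For the diagonal regime, the hypothesis $M:X\to X$ yields directly $\|M(f\chi_j)\chi_k\|_X \lesssim \|f\chi_j\|_X$. For the inner regime, elementary geometry shows that any ball $B\ni x$ intersecting $C_j$ must have radius comparable to $2^k$, so
\[
M(f\chi_j)(x)\lesssim 2^{-kn}\int_{C_j}|f(y)|\,dy,
\]
and H\"older's inequality bounds this integral by $\|f\chi_j\|_X \|\chi_j\|_{X'}$. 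The symmetric estimate with $2^{-jn}$ in place of $2^{-kn}$ holds in the outer regime.

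The key manipulation is then to combine the two structural lemmas: Lemma \ref{L-2-4} (and the reverse inequality from H\"older applied to $\chi_B\cdot\chi_B$) gives $\|\chi_{B_k}\|_X\|\chi_{B_k}\|_{X'}\sim 2^{kn}$ and, since $|C_k|\sim|B_k|$, also $\|\chi_k\|_X\sim\|\chi_{B_k}\|_X$ and $\|\chi_k\|_{X'}\sim\|\chi_{B_k}\|_{X'}$. Lemma \ref{L-2-5}, applied in turn to $X'$ (for which $M$ is bounded on $X''=X$) and to $X$, produces the geometric-decay ratios $\|\chi_{B_j}\|_{X'}/\|\chi_{B_k}\|_{X'}\lesssim 2^{(j-k)n\delta_{X'}}$ for $j\le k$ and $\|\chi_{B_k}\|_X/\|\chi_{B_j}\|_X\lesssim 2^{(k-j)n\delta_X}$ for $k\le j$. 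Stringing these together yields
\[
\|M(f\chi_j)\chi_k\|_X \lesssim \begin{cases} 2^{(j-k)n\delta_{X'}}\|f\chi_j\|_X, & j\leq k-2,\\ 2^{(k-j)n\delta_X}\|f\chi_j\|_X, & j\geq k+2. \end{cases}
\]

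Finally, multiplying by $2^{k\alpha}$ and taking the $\ell^p$ norm in $k$, the three contributions become discrete convolutions of $\{2^{j\alpha}\|f\chi_j\|_X\}_{j\in\mathbb{Z}}$ with kernels whose exponents are $(k-j)(\alpha-n\delta_{X'})$ for $k>j$ and $(k-j)(\alpha+n\delta_X)$ for $k<j$. These kernels are summable on $\mathbb{Z}$ precisely because $\alpha<n\delta_{X'}$ and $\alpha>-n\delta_X$, and the conclusion then follows from Young's convolution inequality when $p\geq 1$ and from the elementary subadditivity $\bigl(\sum_i b_i\bigr)^p\leq\sum_i b_i^p$ when $0<p<1$. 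The principal obstacle is the book-keeping in the previous step: one must carefully pass between $\|\chi_k\|$ and $\|\chi_{B_k}\|$, and swap the roles of $X$ and $X'$ through Lemma \ref{L-2-4}, in order to make the two decay exponents fall exactly within the window $(-n\delta_X,\,n\delta_{X'})$ dictated by the hypothesis.
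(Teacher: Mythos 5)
Your proposal follows essentially the same route as the paper's proof: the same three-regime decomposition over annuli, the same pointwise estimate $M(f\chi_j)(x)\lesssim 2^{-\max(j,k)n}\|f\chi_j\|_{L^1}$ combined with H\"older's inequality and Lemmas \ref{L-2-4} and \ref{L-2-5} to produce the decay factors $2^{(j-k)n\delta_{X'}}$ and $2^{(k-j)n\delta_X}$, and the same use of the window $-n\delta_X<\alpha<n\delta_{X'}$ to sum the resulting geometric series. Your appeal to Young's convolution inequality for $p\geq 1$ is just a packaged form of the H\"older-splitting argument the paper carries out by hand, so the two proofs coincide in substance.
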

\begin{proof}
	Due to $\dot{K}^{\alpha,p}_{X}(\bR^n)\subset L^1_{\rm{loc}}(\bR^n)$ by Lemma \ref{LM-well}, $M$ is well defined on $\dot{K}^{\alpha,p}_{X}(\bR^n)$.
	
	By definition, 
	\begin{align*}
		\|Mf\|_{\dot{K}^{\alpha,p}_{X}}=&\l\{\sum_{k=-\infty}^\infty2^{kp\alpha}\l\|\chi_{k}Mf\r\|^p_{X}\r\}^{1/p}\\
		\lesssim&\l\{\sum_{k=-\infty}^\infty2^{kp\alpha}
		\l\|\l(\sum_{l=-\infty}^\infty M(f\chi_{l})(x)\r)\chi_{k}(x)\r\|^p_{X}\r\}^{1/p}\\
		\lesssim&\l\{\sum_{k=-\infty}^\infty2^{kp\alpha}
		\l\|\l(\sum_{l=-\infty}^{k-2} M(f\chi_{l})(x)\r)\chi_{k}(x)\r\|^p_{X}\r\}^{1/p}\\
		&+\l\{\sum_{k=-\infty}^\infty2^{kp\alpha}
		\l\|\l(\sum_{l=k-1}^{k+1}M(f\chi_{l})(x)\r)\chi_{k}(x)\r\|^p_{X}\r\}^{1/p}\\
		&+\l\{\sum_{k=-\infty}^\infty2^{kp\alpha}
		\l\|\l(\sum_{l={k+2}}^{\infty}M(f\chi_{l})(x)\r)\chi_{k}(x)\r\|^p_{X}\r\}^{1/p}\\
		:=&I_1+I_2+I_3.
	\end{align*}	
	To finish the proof, we only need to show all the above terms are bounded by a constant-multiple of $\|f\|_{\dot{K}^{\alpha,p}_{X}}$.
	
	For the term $I_2$, by using the boundedness of $M$ on $X$, we obtain
	\begin{align*}
		I_2\lesssim&\l\{\sum_{k=-\infty}^\infty2^{kp\alpha}
		\l(\sum_{l={k-1}}^{k+1}\l\|M(f\chi_{l})\r\|^p_{X}\r)\r\}^{1/p}\\
		\lesssim&\l\{\sum_{k=-\infty}^\infty2^{kp\alpha}
		\l(\sum_{l={k-1}}^{k+1}\l\|f\chi_{l}\r\|^p_{X}\r)\r\}^{1/p}\\
		\lesssim&\l\{\sum_{k=-\infty}^\infty2^{kp\alpha}
		\l\|f\chi_{k}\r\|^p_{X}\r\}^{1/p}\sim \|f\|_{\dot{K}^{\alpha,p}_{X}}.
	\end{align*}
	For the term $I_1$, we use the fact that for $l\leq k-2$ and $x\in C_k$, there holds
	\begin{equation}\label{es-2}
	M(f\chi_{l})(x)\lesssim 2^{-kn}\|f\chi_{l}\|_{L^1}.
	\end{equation}
	From (\ref{es-2}) and H{\"o}lder's inequality on ball Banach funtion spaces, we deduce	
	\begin{align}\label{G-B-2}
	I_1\lesssim&\l\{\sum_{k-\infty}^\infty2^{kp\alpha}
	\l\|\l(\sum_{l=-\infty}^{k-2} \|f\chi_{l}\|_{L^1(\bR^n)}\r)2^{-kn}\chi_{k}(x)\r\|^p_{X}\r\}^{1/p}\nonumber\\
	=&\l\{\sum_{k=-\infty}^\infty2^{kp\alpha}
	\l(\sum_{l=-\infty}^{k-2}
	\|f\chi_{l}\|_{L^1(\bR^n)}\r)^p2^{-knp}\|\chi_k\|^p_{X}\r\}^{1/p}\nonumber\\
	\lesssim&\l\{\sum_{k=-\infty}^\infty2^{kp\alpha}
	\l(\sum_{l=-\infty}^{k-2} 2^{-kn}\|f\chi_{l}\|_{X}\|\chi_l\|_{X'}\|\chi_k\|_{X}\r)^p\r\}^{1/p}.
	\end{align}
	For  $l\leq k-2$, using Lemma \ref{L-2-4} and Lemma \ref{L-2-5} with $X'$, we have
	\begin{equation}\label{G-B-1}
	\|\chi_l\|_{X'}\|\chi_k\|_{X}\lesssim 2^{(l-k)n\delta_{X'}}\|\chi_k\|_{X'}\|\chi_k\|_{X}\lesssim 2^{(l-k)n\delta_{X'}}2^{kn}.
	\end{equation}
	Inserting  (\ref{G-B-1}) into (\ref{G-B-2}), we arrive at
	\begin{eqnarray}\label{G-B-3}
	I_1\lesssim\l\{\sum_{k=-\infty}^\infty2^{kp\alpha}
	\l(\sum_{l=-\infty}^{k-2} \|f\chi_{l}\|_{X}2^{(l-k)n\delta_{X'}}\r)^p\r\}^{1/p}.
	\end{eqnarray}
	Now we divide our discussion into two cases.	For $0<p\leq 1$, we have
	\begin{align*}
		I_1\lesssim&\l\{\sum_{k=-\infty}^\infty2^{kp\alpha}
		\l(\sum_{l=-\infty}^{k-2} \|f\chi_{l}\|_{X}2^{(l-k)n\delta_{X'}}\r)^p\r\}^{1/p}\\
		\lesssim& \l\{\sum_{k=-\infty}^\infty2^{kp\alpha}
		\l(\sum_{l=-\infty}^{k-2} \|f\chi_{l}\|^p_{X}2^{(l-k)np\delta_{X'}}\r)\r\}^{1/p}\\
		\lesssim& \l\{\sum_{l=-\infty}^\infty2^{lp\alpha}
		\|f\chi_{l}\|^p_{X}
		\sum_{k={l+2}}^{\infty}2^{{(k-l)}p(\alpha-n\delta_{X'})}\r\}^{1/p}\\
		\lesssim& \|f\|_{\dot{K}^{\alpha,p}_X},
	\end{align*}	
	since $\alpha<n\delta_{X'}$.
	
	For $1<p<\infty$, by using H{\"o}lder's inequality on ball Banach function spaces, we get
	\begin{align*}
		I_1\lesssim&\l\{\sum_{k=-\infty}^\infty2^{kp\alpha}
		\l(\sum_{l=-\infty}^{k-2} \|f\chi_{l}\|_{X}2^{(l-k)n\delta_{X'}}\r)^p\r\}^{1/p}\\
		\lesssim& \Bigg\{\sum_{k=-\infty}^\infty
		\l(\sum_{l=-\infty}^{k-2}2^{lp\alpha} \|f\chi_{l}\|^p_{X}2^{{(k-l)}(\alpha-n\delta_{X'})p/2}\r)\\
		&\times\l(\sum_{l=-\infty}^{k-2}2^{{(k-l)}(\alpha-n\delta_{X'})p'/2}\r)^{p/p'}
		\Bigg\}^{1/p}\\
		\lesssim& \Bigg\{\sum_{k=-\infty}^\infty
		\l(\sum_{l=-\infty}^{k-2}2^{lp\alpha} \|f\chi_{l}\|^p_{X}2^{{(k-l)}(\alpha-n\delta_{X'})p/2}\r)^{1/p}\\
		\lesssim& \l\{\sum_{l=-\infty}^\infty2^{lp\alpha}
		\|f\chi_{l}\|^p_{X}
		\sum_{k={l+2}}^{\infty}2^{{(k-l)}(\alpha-n\delta_{X'})p/2}\r\}^{1/p}\\
		\lesssim& \|f\|_{\dot{K}^{\alpha,p}_X}.
	\end{align*}	
	Therefore, we have $I_1\lesssim \|f\|_{\dot{K}^{\alpha,p}_X}$ noting that  $\alpha<n\delta_{X'}$.
	
	For $I_3$, when $l\geq k+2$, we have
	\begin{equation}\label{es-3}
	|M(f\chi_{l})(x)|~\chi_k(x)\lesssim 2^{-ln}\|f\chi_{l}\|_{L^1}\lesssim2^{-ln}\|f\chi_{l}\|_{X}\|\chi_{l}\|_{X'}.
	\end{equation}
	By using (\ref{es-3}) and H{\"o}lder's inequality on ball Banach function spaces, it follows that
	\begin{eqnarray}\label{es-4}
	I_3\lesssim\l\{\sum_{k=-\infty}^\infty2^{kp\alpha}
	\l(\sum_{l=k+2}^{\infty} 2^{-ln}\|f\chi_{l}\|_{X}\|\chi_{l}\|_{X'}\|\chi_{k}\|_{X}\r)^p\r\}^{1/p}.
	\end{eqnarray}
	For  $l\geq k+2$, using Lemma \ref{L-2-4} and Lemma \ref{L-2-5} with $X$, we have
	\begin{equation}\label{G-B-4}
	\|\chi_l\|_{X'}\|\chi_k\|_{X}\lesssim 2^{(k-l)n\delta_{X}}\|\chi_l\|_{X'}\|\chi_l\|_{X}\lesssim 2^{(k-l)n\delta_{X}}2^{ln}.
	\end{equation}
	Combining (\ref{es-4}) with (\ref{G-B-4}), it yields
	\begin{equation}\label{G-B-5}
	I_3\lesssim\l\{\sum_{k=-\infty}^\infty2^{kp\alpha}
	\l(\sum_{l=k+2}^{\infty} \|f\chi_{l}\|_{X}2^{(k-l)n\delta_{X}}\r)^p\r\}^{1/p}.
	\end{equation}
	Next we devide our discussion into two cases.
	
	When $0<p\leq1$, due to (\ref{G-B-5}), we have
	\begin{align*}
		I_3\lesssim&\l\{\sum_{k=-\infty}^\infty2^{kp\alpha}
		\sum_{l=k+2}^{\infty} \|f\chi_{l}\|^p_{X}2^{(k-l)pn\delta_{X}}\r\}^{1/p}\\
		\lesssim&
		\l\{\sum_{l=-\infty}^\infty2^{lp\alpha}\|f\chi_{l}\|^p_{X}
		\sum_{k=-\infty}^{l-2} 2^{(k-l)p(\alpha+n\delta_{X})}\r\}^{1/p}
		\lesssim\|f\|_{\dot{K}^{\alpha,p}_{X}},
	\end{align*}
	since $\alpha>-n\delta_{X}$.
	
	When $1<p<\infty$, by using H{\"o}lder's inequality and (\ref{G-B-5}), there holds
	\begin{align*}
		I_3\lesssim&\l\{\sum_{k=-\infty}^\infty2^{kp\alpha}
		\sum_{l=k+2}^{\infty} \|f\chi_{l}\|^p_{X}2^{(k-l)pn\delta_{X}}\r\}^{1/p}\\
		\lesssim&
		\l\{\sum_{k=-\infty}^\infty\l(\sum_{l=k+2}^{\infty}2^{lp\alpha}\|f\chi_{l}\|^p_{X}2^{\frac{k-l}{2}p(\alpha+n\delta_{X})}\r)
		\times \l(\sum_{l=k+2}^{\infty} 2^{\frac{k-l}{2}p'(\alpha+n\delta_{X})}\r)^{p/p'}\r\}^{1/p}\\
		\lesssim&
		\l\{\sum_{l=-\infty}^\infty2^{lp\alpha}\|f\chi_{l}\|^p_{X}
		\sum_{k=-\infty}^{l-2} 2^{\frac{k-l}{2}p(\alpha+n\delta_{X})}\r\}^{1/p}
		\lesssim\|f\|_{\dot{K}^{\alpha,p}_{X}},
	\end{align*}
	since $\alpha>-n\delta_{X}$.
	
	Therefore, we have $I_3\lesssim \|f\|_{\dot{K}^{\alpha,p}_{X}}.$	
	
	Combining the estimates of $I_1$, $I_2$ and $I_3$, we get the derised result.
\end{proof}

\begin{remark}
	For $0<p<\infty, 1<q<\infty$, it was proved in [\cite{li1996boundedness}] that the Hardy-Littlewood maximal operator $M$ is bounded on the classical Herz space $\dot{K}^{\alpha,p}_q(\bR^n)$ when $-{n}/{q}<\alpha< n/q'$. In view of $\delta_{L^q}=1/q$ and $\delta_{L^{q'}}=1/q'$, Theorem \ref{T-3} recover the result of [\cite{li1996boundedness}, Theorem 2.1] by setting $X=L^q(\bR^n)~(1<q<\infty)$.
\end{remark}
%For any $\theta\in (0,\infty)$, the powered Hardy-Littlewood maximal operator $M^{(\theta)}$
%is defined by setting, for any $f\in L_{{\rm loc}}(\bR^N)$ 
%and $x\in\bR^N$,
%\begin{equation}\label{E-2-4}
%(M^{(\theta)}(f)(x):= \{M(|f|^\theta)(x)\}^{1/\theta}.
%\end{equation}
\section{Extrapolation on $\dot{K}^{\alpha,p}_X(\bR^n)$}
In this section, we establish the extrapolation theorem on Herz-type spaces associated with ball quasi-Banach function spaces by using the extrapolation theorem on ball quasi-Banach function spaces. 

To state the extrapolation on ball quasi-Banach function spaces, we need the Muckenhoupt weights. The classical Muckenhoupt weights play a key role in the study of weighted theory in harmonic analysis. We recall the definition of $A_p(\bR^n)$ weight (see [\cite{grafakos2008classical}]) briefly. 
\begin{definition}\label{D-2-4}
	For $1<p<\infty$, a locally integrable function $w:\bR^n\longrightarrow (0,\infty)$ is said to be an $A_p(\bR^n)$ weight if
	\begin{eqnarray*}
		[w]_{A_p}:=\sup_{B\in\mathbb{B}}\l(\frac{1}{|B|}\int_Bw(x)dx\r)\l(\frac{1}{|B|}\int_Bw(x)^{-\frac{p'}{p}}dx\r)^{\frac{p}{p'}}<\infty.
	\end{eqnarray*}
	A locally integrable function $w:\bR^n\longrightarrow (0,\infty)$ is said to be an $A_1(\bR^n)$ weight if for all $B\in\bB$,
	$$\frac{1}{|B|}\int_Bw(y)dy\leq Cw(x),~~~ {\rm a.e.}~ x\in B$$
	for some constant $C>0$. The infimum of all such $C$ is denoted by $[w]_{A_1}$. We denote $A_\infty(\bR^n)$ by the union of all $A_p(\bR^n)~(1\leq p<\infty)$ functions.
\end{definition}
Now we give the extrapolation theorem on ball quasi-Banach function spaces proved in [\cite{tao2021}, Proposition 2.14], see also [\cite{zhang2021weak}, Lemma 7.34].
%[\cite{tao2021compactness}, Proposition 2.14].
\begin{lemma}\label{L-4-1}
	Let $X$ be a ball quasi-Banach function space such that $X^{1/s}$ is a ball Banach function space and $M$ is bounded on the associate space $(X^{1/s})'$ for some $0<s<\infty$. Let
	$T$ be an operator satisfying, for any given $w\in A_1(\bR^n)$ and any $f\in L^s_w(\bR^n)$,
	\begin{equation*}
	\|T(f)\|_{L^s_w}\lesssim \|f\|_{L^s_w},
	\end{equation*}
	where the implicated constant is independent on $f$, but depends on $s$ and $[w]_{A_1}$. Then for any $f\in X$,
	\begin{equation*}
	\|T(f)\|_{X}\lesssim \|f\|_{X}.
	\end{equation*}
\end{lemma}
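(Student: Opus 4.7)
The plan is to invoke the Rubio de Francia iteration algorithm and reduce the claim on the ball quasi-Banach function space $X$ to the weighted $L^s$ hypothesis on $T$ via duality in $X^{1/s}$.

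First, I would use the identity $\|T(f)\|_X^s = \||T(f)|^s\|_{X^{1/s}}$ coming from Definition \ref{D-2-3} together with the hypothesis that $X^{1/s}$ is a ball Banach function space, so that Lemma \ref{L-2-1} yields
$$\||T(f)|^s\|_{X^{1/s}} = \sup \Bigl\{ \int_{\bR^n} |T(f)(x)|^s h(x)\,dx : h \geq 0, \ \|h\|_{(X^{1/s})'} \leq 1 \Bigr\}.$$
Next, since $M$ is bounded on $(X^{1/s})'$ with operator norm $C_0 := \|M\|_{(X^{1/s})' \to (X^{1/s})'}$, for each admissible $h$ I would introduce the Rubio de Francia operator
$$\mathcal{R}h(x) := \sum_{k=0}^{\infty} \frac{M^k h(x)}{(2C_0)^k},$$
where $M^k$ denotes the $k$-fold iteration of $M$ and $M^0 h := h$. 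Three standard properties follow: (i) $0 \leq h \leq \mathcal{R}h$; (ii) $\|\mathcal{R}h\|_{(X^{1/s})'} \leq 2\|h\|_{(X^{1/s})'}$, by summing the geometric series in $(X^{1/s})'$; and (iii) $M(\mathcal{R}h) \leq 2C_0\,\mathcal{R}h$ pointwise, so $\mathcal{R}h$ is an $A_1(\bR^n)$ weight with $[\mathcal{R}h]_{A_1} \leq 2C_0$.

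With these in hand, I would apply the hypothesized weighted $L^s$ bound for $T$ to the weight $w := \mathcal{R}h$, whose $A_1$ constant is controlled solely by $C_0$. This gives
$$\int_{\bR^n} |T(f)|^s h\,dx \ \leq\ \int_{\bR^n} |T(f)|^s \mathcal{R}h\,dx \ \lesssim\ \int_{\bR^n} |f|^s \mathcal{R}h\,dx,$$
with an implicit constant depending only on $s$ and $2C_0$. By Hölder's inequality on the ball Banach function space $X^{1/s}$ (Lemma \ref{L-2-2}) together with property (ii),
$$\int_{\bR^n} |f|^s \mathcal{R}h\,dx \ \leq\ \||f|^s\|_{X^{1/s}} \|\mathcal{R}h\|_{(X^{1/s})'} \ \leq\ 2\|f\|_X^s \|h\|_{(X^{1/s})'} \ \leq\ 2\|f\|_X^s.$$
Taking the supremum over admissible $h$ and extracting the $s$-th root yields $\|T(f)\|_X \lesssim \|f\|_X$.

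The main obstacle I would expect is a careful verification of the three properties of $\mathcal{R}$ in the quasi-Banach setting — especially the $(X^{1/s})'$-norm estimate (ii), which requires the series $\sum_k (2C_0)^{-k} \|M^k h\|_{(X^{1/s})'}$ to converge and control the associate norm of the sum; here the fact that $(X^{1/s})'$ is itself a ball Banach function space (Remark \ref{R-2-2}) and that $M$ acts as a sublinear operator with the geometric decay $\|M^k h\|_{(X^{1/s})'} \leq C_0^k \|h\|_{(X^{1/s})'}$ together with the Fatou-type monotonicity axiom (iii) of Definition \ref{D-2-1} delivers the claim. Everything else in the argument is a direct chaining of duality, pointwise domination, and the weighted hypothesis.
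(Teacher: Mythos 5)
Your argument is correct and is exactly the standard Rubio de Francia extrapolation proof: dualize $\||T(f)|^s\|_{X^{1/s}}$ via Lemma \ref{L-2-1}, majorize the dual function $h$ by the iterated-maximal-function weight $\mathcal{R}h\in A_1$ with $[\mathcal{R}h]_{A_1}\leq 2C_0$, apply the weighted hypothesis, and close with H\"older's inequality (Lemma \ref{L-2-2}); the only presentational point is that the finiteness $\int|f|^s\mathcal{R}h\,dx\leq 2\|f\|_X^s<\infty$, which you prove at the end, should be noted before invoking the weighted bound so that $f\in L^s_{\mathcal{R}h}(\bR^n)$ is justified. The paper itself gives no proof of this lemma — it quotes it from [\cite{tao2021}, Proposition 2.14] and [\cite{zhang2021weak}, Lemma 7.34] — and your proof coincides with the argument given in those references.
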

%\begin{remark}\label{R-2-3}
%	Lemma \ref{L-4-1} is a little different from the extrapolation theorem on ball quasi-Banach function spaces obtained in [\cite{zhang2021weak}, Lemma 7.34] (see also [\cite{cruz2011weights}, Theorem 4.6]) in the sense that we donot need any density results when applying the extrapolation theorem.
%\end{remark}
Using Lemma \ref{L-4-1}, we can now give the extrapolation theorem on Herz-type spaces associated with ball quasi-Banach function spaces. 
\begin{theorem}\label{extrapo}
	Let  $\alpha\in\bR$, $0<p<\infty$, and $X$ be a ball quasi-Banach function space such that $X^{1/s}$ is a ball Banach function space and $M$ is bounded on the associate space $(X^{1/s})'$ for some $s<p$. Suppose $(X^{1/s})'=(X^{1/s})^*$ and $-n\delta_{X^{1/s}}<s\alpha<n\delta_{(X^{1/s})'}$, where $\delta_{X^{1/s}}$ and $\delta_{(X^{1/s})'}$ are constants appeared in (\ref{G-2-3}). Let
	$T$ be an operator satisfying, for any given $w\in A_1(\bR^n)$ and any $f\in L^s_w(\bR^n)$,
	\begin{equation*}
	\|T(f)\|_{L^s_w}\lesssim \|f\|_{L^s_w},
	\end{equation*}
	where the implicated constant is independent on $f$, but depends on $s$ and $[w]_{A_1}$. Then for  for any $f\in \dot{K}^{\alpha,p}_X(\bR^n)$,
	\begin{equation*}
	\|T(f)\|_{\dot{K}^{\alpha,p}_X}\lesssim \|f\|_{\dot{K}^{\alpha,p}_X}.
	\end{equation*}
\end{theorem}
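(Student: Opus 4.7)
The plan is to apply Lemma \ref{L-4-1} directly with $Y:=\dot{K}^{\alpha,p}_X(\bR^n)$ playing the role of the ball quasi-Banach function space and the given $s$ playing the role of the convexity parameter. This reduces the problem to verifying two hypotheses: (a) $Y^{1/s}$ is a ball Banach function space, and (b) the Hardy-Littlewood maximal operator $M$ is bounded on the associate space $(Y^{1/s})'$. Once these are established, Lemma \ref{L-4-1} immediately delivers $\|T(f)\|_{\dot{K}^{\alpha,p}_X}\lesssim\|f\|_{\dot{K}^{\alpha,p}_X}$ from the weighted $L^s_w$ assumption on $T$.

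For (a), I would first record the scaling identity $(\dot{K}^{\alpha,p}_X)^{1/s}=\dot{K}^{s\alpha,p/s}_{X^{1/s}}$, which is immediate from $\||g|^s\chi_k\|_{X^{1/s}}=\|g\chi_k\|_X^s$ together with the definition of the Herz-type norm. Since $s<p$ we have $p/s\in(1,\infty)$, and the hypothesis $-n\delta_{X^{1/s}}<s\alpha<n\delta_{(X^{1/s})'}$ is precisely the range required by Proposition \ref{p-7} applied to the ball Banach function space $X^{1/s}$ with exponent $p/s$ and Herz index $s\alpha$. Hence $Y^{1/s}$ is indeed a ball Banach function space.

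For (b), I would identify the associate space as $(Y^{1/s})'=\dot{K}^{-s\alpha,(p/s)'}_{(X^{1/s})'}$, using the pairing bound of Corollary \ref{c-dual-1} for one inclusion and the supremum characterization of Corollary \ref{c-dual} for the reverse; both invocations rest on the hypothesis $(X^{1/s})'=(X^{1/s})^*$. Then I would apply Theorem \ref{T-3} to the base space $(X^{1/s})'$ with Herz index $-s\alpha$ and exponent $(p/s)'$; its hypotheses reduce to $M$ being bounded on $(X^{1/s})'$ (given) and $-n\delta_{(X^{1/s})'}<-s\alpha<n\delta_{((X^{1/s})')'}$, which via Lemma \ref{L-2-1} (since $((X^{1/s})')'=X^{1/s}$) is exactly the two-sided bound $-n\delta_{X^{1/s}}<s\alpha<n\delta_{(X^{1/s})'}$ assumed in the statement.

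With (a) and (b) in hand, Lemma \ref{L-4-1} applied to $Y=\dot{K}^{\alpha,p}_X(\bR^n)$ completes the proof. The main technical point is the associate-space identification in (b); everything else is bookkeeping. It is worth noting that the symmetric condition on $s\alpha$ in the hypothesis is structured precisely so that it simultaneously guarantees the ball-Banach property of $Y^{1/s}$ in (a) \emph{and} the $M$-boundedness on the dual Herz-type space in (b), so no additional range restriction on $\alpha$ beyond what is stated is needed.
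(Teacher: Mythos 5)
Your proposal is correct and follows essentially the same route as the paper: reduce to Lemma \ref{L-4-1} via the convexification identity $(\dot{K}^{\alpha,p}_X(\bR^n))^{1/s}=\dot{K}^{s\alpha,p/s}_{X^{1/s}}(\bR^n)$, verify that this is a ball Banach function space, identify its associate space with $\dot{K}^{-s\alpha,(p/s)'}_{(X^{1/s})'}(\bR^n)$, and invoke Theorem \ref{T-3} to get the boundedness of $M$ there. The only divergence is in the associate-space identification, where the paper passes through the Banach dual (Theorem \ref{HD-1}) combined with an absolute-continuity-of-norm argument, whereas you work with the K{\"o}the dual directly via Corollaries \ref{c-dual} and \ref{c-dual-1}; both routes rest on the same duality machinery under the hypothesis $(X^{1/s})'=(X^{1/s})^*$ and yield the same conclusion.
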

\begin{proof}
	Note that $\dot{K}^{s\alpha,p/s}_{X^{1/s}}(\bR^n)$ is a ball Banach function space because $p/s>1$ and $X^{1/s}$ is a ball Banach function space. Since the condition $-n\delta_{X^{1/s}}<s\alpha<n\delta_{(X^{1/s})'}$ is equivalent to $-n\delta_{(X^{1/s})'}<-s\alpha<n\delta_{X^{1/s}}$, we obtain the boundedness of $M$ on $\dot{K}^{-s\alpha,(p/s)'}_{(X^{1/s})'}(\bR^n)$ by using Theorem \ref{T-3}.
	
	On the one hand, the convexity of ball quasi-Banach function spaces yields
	\begin{equation}\label{g-4-1}
	[\dot{K}^{\alpha,p}_{X}(\bR^n)]^{1/s}=\dot{K}^{s\alpha,p/s}_{X^{1/s}}(\bR^n).
	\end{equation}
	
	On the other hand, Theorem \ref{HD-1} guarantees that
	\begin{equation}\label{g-4-2}
	[\dot{K}^{s\alpha,p/s}_{X^{1/s}}(\bR^n)]^*=\dot{K}^{-s\alpha,(p/s)'}_{(X^{1/s})'}(\bR^n).
	\end{equation}
	Since $(X^{1/s})'=(X^{1/s})^*$, we know that the ball Banach function space $X^{1/s}$ has an absolutely continuous norm from Remark \ref{R-3-1}, which further yields that $[X^{1/s}]_b$ is dense in $X^{1/s}$ by using [\cite{bennett1988interpolation}, Chapter 1, Theorem 4.1 and Corollary 4.3]. Here and in what follows, for any ball Banach function space $Y$, $Y_b$ denotes the clousure in $Y$ of simple functions.
	
	Parallary to the proof of [\cite{ho2019extrapolation}, Proposition 4.2], we can prove that $[\dot{K}^{s\alpha,p/s}_{X^{1/s}}(\bR^n)]_b$ is dense in $\dot{K}^{s\alpha,p/s}_{X^{1/s}}(\bR^n)$, since the key point in the proof of [\cite{ho2019extrapolation}, Proposition 4.2] is the density of simple functions in $L^{p(\cdot)}(\bR^n)$. That is to say, $K^{s\alpha,p/s}_{X^{1/s}}(\bR^n)$ has an absolutely continuous norm by using [\cite{bennett1988interpolation}, Chapter 1, Theorem 3.11]. 
	
	Again by using [\cite{bennett1988interpolation}, Chapter 1, Corollary 4.3] and (\ref{g-4-1}), (\ref{g-4-2}), we have
	\begin{equation}\label{g-4-3}
	([\dot{K}^{\alpha,p}_{X}(\bR^n)]^{1/s})'=[\dot{K}^{s\alpha,p/s}_{X^{1/s}}(\bR^n)]^*=\dot{K}^{-s\alpha,(p/s)'}_{(X^{1/s})'}(\bR^n).
	\end{equation}
	The boundedness of  $M$ on $\dot{K}^{-s\alpha,(p/s)'}_{(X^{1/s})'}(\bR^n)$, together with (\ref{g-4-3}), yields that 
	the conditions of Lemma \ref{L-4-1} are satisfied for the ball quasi-Banach function spaces $\dot{K}^{\alpha,p}_{X}(\bR^n)$. 
	
	As a consequence of Lemma \ref{L-4-1}, we obtain the for all $f\in \dot{K}^{\alpha,p}_X(\bR^n)$,
	$$\|T(f)\|_{\dot{K}^{\alpha,p}_X}\lesssim \|f\|_{\dot{K}^{\alpha,p}_X}.$$
\end{proof}
The preceding theorem gives a complementary result for the extrapolation theory on Herz-type spaces. For the extrapolation theorems on some concrete Herz-type spaces, in particular, Herz-type spaces with variable exponent, the reader is refered to [\cite{ho2019extrapolation,ho2020spherical}].
\section{Boundedness of operators on $\dot{K}^{\alpha,p}_X(\bR^n)$}
In this section, with the help of Theorem \ref{extrapo}, we will study the boundedness of singular integral operators with rough kernels and their commutators, Parametric Marcinkiewicz integrals and oscillatory singular integral operators on ${\dot K}^{\alpha,p}_X(\bR^n)$. 
Other than the operators mentioned above, Theorem \ref{extrapo} can also apply to the boundededness of a large class of integral operators and their commutators as long as they are weighted bounded.

\subsection{Singular integral operators with rough kenels and their commutators}
In this subsection, we will study the boundedness of singular integral operators with rough kenels and theirs commutators on Herz-type spaces associated with ball quasi-Banach function spaces.

Suppose that $\mathbb{S}^{n-1}$ is the unit sphere in $\bR^n~(n\geq2)$ equipped with the normalized
Lebesgue measure $d\sigma$. We say that $\Omega$ is
a homogeneous function of degree zero if $\Omega(\lambda x)=\Omega(x)$, for any $\lambda>0$ and
$x\in\bR^n$. We say that $\Omega$ satisfies the vanishing moment condition if
\begin{equation}\label{van}
\int_{\mathbb{S}^{n-1}}\Omega(x')d\sigma(x')=0,
\end{equation}
where $x'=x/|x|$ for all $x\neq0$.

Let $\Omega\in L^1(\bR^n)$ be a homogeneous function satisfying the vanishing
moment condition. The singular integral with rough kernel $\Omega$ is defined by setting, for any $f\in L^1_{{\rm loc}}(\bR^n)$ and $x\in\bR^n$,
\begin{equation}\label{sin}
T_\Omega(f)(x):=\int_{\bR^n}\frac{\Omega(x-y)}{|x-y|^n}f(y)dy.
\end{equation}
The weighted norm inequality for singular integrals
with rough kernels can be stated as follows, see [\cite{duoandikoetxea1993weighted,watson1990weighted}] for the proof.
\begin{theorem}\label{wei-sin}
	Let $1<q<\infty$ and $\Omega\in L^q(\mathbb{S}^{n-1})$ be a homogeneous function
	of degree zero that satisfies the vanishing moment condition. If $p\in(q',\infty)$ and $w\in A_{p/q'}(\bR^n)$,
	then $T_\Omega$ is bounded on $L^p_w(\bR^n)$.
\end{theorem}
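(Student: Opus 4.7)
The plan is to follow the dyadic annular decomposition of Duoandikoetxea--Rubio de Francia combined with weighted Littlewood--Paley estimates.

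First, I would fix a smooth radial partition of unity $\{\phi(2^{-j}\cdot)\}_{j\in\mathbb{Z}}$ with $\mathrm{supp}\,\phi\subseteq[1/2,2]$ and decompose the kernel $K(x)=\Omega(x)/|x|^n$ as $K=\sum_{j\in\mathbb{Z}}K_j$ with $K_j(x):=K(x)\phi(2^{-j}|x|)$. Writing $T_jf:=K_j*f$ gives $T_\Omega=\sum_{j}T_j$. The vanishing moment condition \eqref{van} together with $\Omega\in L^q(\mathbb{S}^{n-1})$ should yield the two-sided Fourier estimate
\[
|\widehat{K_j}(\xi)|\lesssim \min\bigl(|2^j\xi|,\,|2^j\xi|^{-1/(2q')}\bigr),
\]
where the lower power comes from the cancellation built into \eqref{van} and the upper decay from a stationary-phase / $TT^{*}$ analysis on $\mathbb{S}^{n-1}$.

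Next, I would introduce a Littlewood--Paley partition $\{\Delta_k\}_{k\in\mathbb{Z}}$ frequency-localized to $|\xi|\sim 2^k$ and regroup
\[
T_\Omega f = \sum_{l\in\mathbb{Z}}S_l f,\qquad S_l f:=\sum_{j\in\mathbb{Z}}T_j\Delta_{-l-j}f.
\]
Because $S_l$ places the kernel scale and the frequency shell at relative offset $2^{-l}$, the Fourier estimate converts by Plancherel into $\|S_lf\|_{L^2}\lesssim 2^{-\varepsilon|l|}\|f\|_{L^2}$ for some $\varepsilon=\varepsilon(q)>0$. To pass from $L^2$ to $L^p_w$ with $w\in A_{p/q'}(\bR^n)$, I would combine the pointwise bound $|T_jg(x)|\lesssim \bigl(M(|g|^{q'})(x)\bigr)^{1/q'}$---which follows from H\"older on the annular support of $K_j$ using $\Omega\in L^q(\mathbb{S}^{n-1})$---with the weighted vector-valued Fefferman--Stein and Kurtz-type Littlewood--Paley inequalities. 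The assumption $w\in A_{p/q'}$ is exactly what guarantees that $g\mapsto (M(|g|^{q'}))^{1/q'}$ maps $L^p_w(\bR^n)$ to itself, producing a uniform-in-$l$ weighted bound. Interpolation with the decaying $L^2$ estimate then yields $\|S_lf\|_{L^p_w}\lesssim 2^{-\varepsilon'|l|}\|f\|_{L^p_w}$ for a smaller $\varepsilon'>0$, and summation in $l$ closes the argument.

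The hardest part will be the Fourier decay $|\widehat{K_j}(\xi)|\lesssim |2^j\xi|^{-1/(2q')}$: since $\Omega$ is only assumed to lie in $L^q(\mathbb{S}^{n-1})$ with no smoothness, the standard van der Corput argument is unavailable, and one must run a $TT^{*}$ bootstrap on the sphere, which is precisely what ties the decay exponent to $q'$ and thereby forces the $A_{p/q'}$ class on the weight. A secondary technicality is justifying absolute convergence of $\sum_j T_j f$ in the weighted space, which I would handle by first establishing the result for a dense class (say, Schwartz functions) and extending by density using the uniform $L^p_w$ bound.
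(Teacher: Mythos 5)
The paper does not actually prove this theorem; it quotes it from Duoandikoetxea [\cite{duoandikoetxea1993weighted}] and Watson [\cite{watson1990weighted}], and your outline is essentially the argument of those references: dyadic decomposition $K=\sum_j K_j$, Fourier decay of $\widehat{K_j}$ from the $L^q(\mathbb{S}^{n-1})$ hypothesis and the cancellation \eqref{van}, the pointwise domination $|T_jg|\lesssim (M(|g|^{q'}))^{1/q'}$ giving uniform weighted bounds for $w\in A_{p/q'}$ via weighted Littlewood--Paley and vector-valued maximal inequalities, and interpolation with change of measure to sum the exponentially decaying pieces. One remark: the decay $|\widehat{K_j}(\xi)|\lesssim |2^j\xi|^{-\varepsilon}$ does not require a $TT^{*}$ bootstrap on the sphere --- it follows from the elementary radial oscillatory bound $\bigl|\int_{2^j}^{2^{j+1}}e^{-ir\theta\cdot\xi}\,dr/r\bigr|\lesssim \min\bigl(1,(2^j|\theta\cdot\xi|)^{-1}\bigr)$ combined with H\"older's inequality in $\theta$ against $\|\Omega\|_{L^q(\mathbb{S}^{n-1})}$, so the step you flag as hardest is in fact routine.
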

The preceding result and Theorem \ref{extrapo} yield the boundedness of $T_\Omega$ on $\dot{K}^{\alpha,p}_X(\bR^n)$.
\begin{theorem}\label{wei-singu}
	Let $1<q<\infty$ and $\Omega\in L^q(\mathbb{S}^{n-1})$ be a homogeneous function
	of degree zero that satisfies the vanishing moment condition. Suppose  $\alpha\in\bR$, $q'<p<\infty$, and $X$ is a ball quasi-Banach function space such that $X^{1/s}$ is a ball Banach function space and $M$ is bounded on the associate space $(X^{1/s})'$ for some $s$ satisfying $q'<s<p<\infty$. If $(X^{1/s})'=(X^{1/s})^*$ and $-n\delta_{X^{1/s}}<s\alpha<n\delta_{(X^{1/s})'}$, where $\delta_{X^{1/s}}$ and $\delta_{(X^{1/s})'}$ are constants appeared in (\ref{G-2-3}), then for all $f\in \dot{K}^{\alpha,p}_X(\bR^n)$, 
	$$\|T_{\Omega}(f)\|_{\dot{K}^{\alpha,p}_X}\lesssim\|f\|_{\dot{K}^{\alpha,p}_X}.$$
\end{theorem}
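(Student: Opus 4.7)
The plan is to deduce this as an essentially immediate consequence of the extrapolation theorem (Theorem \ref{extrapo}) combined with the weighted $L^p$-boundedness of $T_\Omega$ (Theorem \ref{wei-sin}). The strategy is standard in Rubio de Francia--type extrapolation arguments: verify a single weighted $L^s$ bound with $A_1$ weights, and then invoke extrapolation to transfer the bound to the Herz-type space $\dot K^{\alpha,p}_X(\bR^n)$.

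First, I would establish the weighted $L^s$ estimate. By hypothesis, $s$ satisfies $q'<s<p$, which gives $s/q'>1$. By the standard nesting of Muckenhoupt classes, every $w\in A_1(\bR^n)$ also belongs to $A_{s/q'}(\bR^n)$, with $[w]_{A_{s/q'}}\lesssim[w]_{A_1}$. Applying Theorem \ref{wei-sin} (with the exponent parameter equal to $s$), we obtain, for every $w\in A_1(\bR^n)$ and every $f\in L^s_w(\bR^n)$,
\begin{equation*}
\|T_\Omega(f)\|_{L^s_w}\lesssim \|f\|_{L^s_w},
\end{equation*}
where the implicit constant depends only on $s$, $n$, $q$, $\Omega$, and $[w]_{A_1}$.

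Second, I would verify that the hypotheses of Theorem \ref{extrapo} are all met with this choice of $s$: the ball quasi-Banach space $X$ has $X^{1/s}$ a ball Banach function space with $M$ bounded on $(X^{1/s})'$, the identification $(X^{1/s})'=(X^{1/s})^*$ holds, and the range condition $-n\delta_{X^{1/s}}<s\alpha<n\delta_{(X^{1/s})'}$ is assumed. Since $s<p$, the extrapolation theorem applies, and the weighted $L^s$ estimate above yields
\begin{equation*}
\|T_\Omega(f)\|_{\dot K^{\alpha,p}_X}\lesssim \|f\|_{\dot K^{\alpha,p}_X}
\end{equation*}
for every $f\in \dot K^{\alpha,p}_X(\bR^n)$, as required.

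There is no serious obstacle here; the only thing to be careful about is the indexing condition $s>q'$, which is precisely what makes Theorem \ref{wei-sin} applicable for every $A_1$ weight, and the condition $s<p$, which is what is needed to feed the extrapolation machinery. Both are built into the statement, so the proof amounts to assembling the two cited results in the correct order.
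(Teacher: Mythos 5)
Your proposal is correct and is exactly the argument the paper intends: the paper derives Theorem \ref{wei-singu} directly by combining Theorem \ref{wei-sin} (applied with exponent $s$, using $A_1(\bR^n)\subseteq A_{s/q'}(\bR^n)$ since $s>q'$) with the extrapolation Theorem \ref{extrapo}. Your write-up simply makes explicit the Muckenhoupt-class nesting step that the paper leaves implicit.
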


As is well known, commutators are also important operators in harmonic analysis. Recall that for a locally integrable function $b$ and an integral operator $T$, the commutator formed by $b$ and $T$ is defined by $[b,T]=bT-Tb$. 

The first significant result in this direction was made by Coifman et al. [\cite{coifman1976factorization}], which characterizes
the boundedness of such type commutators on the Lebesgue spaces via the
well-known space ${\rm BMO}(\bR^n)$. Recall that the space ${\rm BMO}(\bR^n)$, initially introduced by John and Nirenberg [\cite{john1961functions}], is defined to be the set of all locally integrable functions $f$ on $\bR^n$ such that
\begin{equation*}
\|f\|_{\rm BMO}:=\sup_{B\in\mathbb{B}}\frac{\|(f-f_B)\chi_B\|_{L^1}}{\|\chi_B\|_{L^1}}<\infty,
\end{equation*}
where
$f_B:=\frac{1}{|B|}\int_{B}f(x)dx$ is the average of $f$ over the ball $B$.

Now we focus on the boundedness of the commutator $[b,T_\Omega]$ on $\dot{K}^{\alpha,p}_X(\bR^n)$, where $b\in{\rm BMO}(\bR^n)$ and $T_\Omega$ is a singular integral with rough kernel defined in (\ref{sin}). We first recall the weighted boundedness of $[b,T_\Omega]$ established in [\cite{lu2007singular}].
\begin{theorem}\label{wei-sin-com}
	Let $1<q<\infty$ and $\Omega\in L^q(\mathbb{S}^{n-1})$ be a homogeneous function
	of degree zero that satisfies the vanishing moment condition. If $p\in(q',\infty)$ and $w\in A_{p/q'}(\bR^n)$ and $b\in{\rm BMO}(\bR^n)$,
	then $T_\Omega$ is bounded on $L^p_w(\bR^n)$.
\end{theorem}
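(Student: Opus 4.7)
The plan is to prove this via the Fefferman--Stein sharp maximal function technique, which is the standard approach for weighted estimates of commutators with rough singular integrals. First I would fix $x\in\bR^n$ and any ball $B\ni x$, and exploit the Coifman--Rochberg--Weiss identity
$$[b,T_\Omega]f(x)=(b(x)-b_B)\,T_\Omega f(x)-T_\Omega\bigl((b-b_B)f\bigr)(x),$$
so that for any constant $c$,
$$[b,T_\Omega]f(x)-c=(b(x)-b_B)\,T_\Omega f(x)-T_\Omega\bigl((b-b_B)f\bigr)(x)-c.$$
The target is a pointwise estimate of the form
$$M^{\#}_\delta\bigl([b,T_\Omega]f\bigr)(x)\lesssim \|b\|_{\rm BMO}\bigl(M_r(T_\Omega f)(x)+M_{q',r}(f)(x)\bigr)$$
for some $0<\delta<1$ and some $r$ slightly larger than $q'$, where $M^{\#}_\delta$ is the sharp maximal function with truncation exponent $\delta$ and $M_{q',r}(f)=\bigl(M(|f|^{q'r/q'})\bigr)^{1/r}$ is the appropriate $L^{q'}$-based maximal function. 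Choosing $c=(T_\Omega((b-b_B)f_2))(x_B)$ (the value at the center) is the usual trick to cancel the dominant tail.

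Second, I would split $f=f_1+f_2$ with $f_1=f\chi_{2B}$ and handle each piece separately. For the local part $T_\Omega((b-b_B)f_1)$, one applies Kolmogorov's inequality (since $T_\Omega$ is only of weak type $(1,1)$ for good $\Omega$, or more directly uses the $L^{q_0}$ boundedness of $T_\Omega$ for any $q_0\in(q',\infty)$), followed by H\"older's inequality with exponents tuned to $q'$; John--Nirenberg then controls the $L^s$ averages of $(b-b_B)$ by $\|b\|_{\rm BMO}$ for any finite $s$. For the non-local part, the difference $T_\Omega((b-b_B)f_2)(x)-T_\Omega((b-b_B)f_2)(x_B)$ is decomposed along annuli $2^{k+1}B\setminus 2^kB$; on each annulus one uses the homogeneity of $\Omega$, the difference of the kernels, and H\"older with exponents $(q,q')$ to exploit $\Omega\in L^q(\mathbb{S}^{n-1})$, producing a geometric series in $2^{-k}$ together with the maximal function $M_{q',r}f(x)$ and a BMO factor $k\|b\|_{\rm BMO}$ from iterated averaging inequalities.

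Third, once the sharp maximal bound is in hand, I would invoke the weighted Fefferman--Stein inequality $\|g\|_{L^p_w}\lesssim \|M^{\#}_\delta g\|_{L^p_w}$ valid for all $w\in A_\infty$ (in particular for $w\in A_{p/q'}$, which sits inside $A_\infty$), apply it with $g=[b,T_\Omega]f$, and control the right-hand side using (a) the weighted boundedness of $T_\Omega$ on $L^p_w$ from Theorem \ref{wei-sin}, and (b) the boundedness of $M_{q',r}$ on $L^p_w(\bR^n)$ for $w\in A_{p/q'}$, which follows from the Muckenhoupt theorem for $M$ applied to $|f|^{q'r}$ at exponent $p/(q'r)$, provided $r>1$ is chosen sufficiently close to $1$ so that $w\in A_{p/(q'r)}$ persists; this is possible by the openness property of the $A_p$ classes.

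The main obstacle is the sharp maximal function estimate itself, specifically the non-local piece. The roughness of $\Omega$ means we cannot use the standard pointwise size and smoothness estimates of a Calder\'on--Zygmund kernel, and the usual telescoping on annuli must be replaced by a more delicate H\"older-based decomposition that only needs $\Omega\in L^q(\mathbb{S}^{n-1})$. The BMO factor, meanwhile, is extracted through the logarithmic growth $\|b-b_{2^kB}\|\lesssim k\|b\|_{\rm BMO}$, and ensuring that the resulting sum still converges geometrically is the delicate point; this forces the restriction $w\in A_{p/q'}$ rather than merely $w\in A_p$.
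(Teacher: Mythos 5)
First, note that the paper does not actually prove this statement: it is quoted as a known result from [\cite{lu2007singular}] (and, as stated, the conclusion should read that the commutator $[b,T_\Omega]$, not $T_\Omega$ itself, is bounded on $L^p_w(\bR^n)$). So your proposal must be judged against the standard proof in the literature rather than against an argument in the paper.

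Your route through the Fefferman--Stein sharp maximal function has a genuine gap at exactly the point you flag as ``the main obstacle,'' and the obstacle is structural, not technical. In the non-local part you must sum, over the annuli $2^{k+1}B\setminus 2^kB$, quantities controlled by $\|K(x,\cdot)-K(x_B,\cdot)\|_{L^q(2^{k+1}B\setminus 2^kB)}$, i.e.\ an $L^q$-H\"ormander condition. For the rough kernel $K(x,y)=\Omega(x-y)/|x-y|^{n}$ this difference involves $\Omega((x-y)')-\Omega((x_B-y)')$, and with $\Omega$ merely in $L^q(\mathbb{S}^{n-1})$ --- no continuity and no $L^q$-Dini condition are assumed --- there is no decay in $k$ to extract: the unit vectors $(x-y)'$ and $(x_B-y)'$ are close for large $k$, but proximity of the arguments yields nothing for a function that is only $q$-integrable on the sphere. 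This is precisely why the weighted theory of $T_\Omega$ itself (Theorem \ref{wei-sin}, due to Duoandikoetxea and Watson) is proved via Fourier transform estimates, Littlewood--Paley decompositions and interpolation with change of measure rather than by Calder\'on--Zygmund or sharp-function methods; if a pointwise sharp maximal bound were available, those weighted estimates would have been immediate. The proof that actually works for the commutator is the conjugation (Cauchy integral) trick of Coifman--Rochberg--Weiss as systematized by Alvarez--Bagby--Kurtz--P\'erez: write
$$[b,T_\Omega]f=\frac{1}{2\pi i}\int_{|z|=\varepsilon}\frac{e^{zb}\,T_\Omega(e^{-zb}f)}{z^{2}}\,dz,$$
observe via John--Nirenberg and the openness of the $A_{p/q'}$ class that $w\,e^{p\,\mathrm{Re}(z)\,b}\in A_{p/q'}(\bR^n)$ with uniformly controlled constant for $|z|\le\varepsilon$ small, apply Theorem \ref{wei-sin} to each conjugated operator, and conclude by Minkowski's inequality. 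This argument needs no kernel regularity at all, only the already-established weighted bound for $T_\Omega$ over the full class $A_{p/q'}(\bR^n)$, and it is the standard proof of the result cited here from [\cite{lu2007singular}]. As written, your first two steps cannot be completed under the stated hypotheses.
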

Similarly, we can obtain the following result on the boundedness of $[b,T_\Omega]$ on $\dot{K}^{\alpha,p}_X(\bR^n)$ as a consequence of Theorem \ref{extrapo} and Theorem \ref{wei-sin-com}.
\begin{theorem}\label{wei-singu-com}
	Let $1<q<\infty$ and $\Omega\in L^q(\mathbb{S}^{n-1})$ be a homogeneous function
	of degree zero that satisfies the vanishing moment condition. Suppose  $\alpha\in\bR$, $q'<p<\infty$, and $X$ is a ball quasi-Banach function space such that $X^{1/s}$ is a ball Banach function space and $M$ is bounded on the associate space $(X^{1/s})'$ for some $s$ satisfying $q'<s<p<\infty$. If $b\in{\rm BMO}(\bR^n)$, $(X^{1/s})'=(X^{1/s})^*$ and $-n\delta_{X^{1/s}}<s\alpha<n\delta_{(X^{1/s})'}$, where $\delta_{X^{1/s}}$ and $\delta_{(X^{1/s})'}$ are constants appeared in (\ref{G-2-3}), then for all $f\in \dot{K}^{\alpha,p}_X(\bR^n)$, 
	$$\|[b,T_{\Omega}](f)\|_{\dot{K}^{\alpha,p}_X}\lesssim\|f\|_{\dot{K}^{\alpha,p}_X}.$$
\end{theorem}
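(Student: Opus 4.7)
The plan is to apply the extrapolation theorem for Herz-type spaces (Theorem \ref{extrapo}) to the commutator $[b, T_\Omega]$, with the weighted $L^s$ estimate coming from Theorem \ref{wei-sin-com}. The argument will mirror the proof of Theorem \ref{wei-singu} step by step; the only change is that the weighted ingredient is upgraded from the rough-kernel singular integral to its commutator version.

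First I would verify that all the structural assumptions on $X$, $\alpha$, $p$ and $s$ in the statement---namely, $X^{1/s}$ is a ball Banach function space, $M$ is bounded on $(X^{1/s})'$, $(X^{1/s})' = (X^{1/s})^*$, and $-n\delta_{X^{1/s}} < s\alpha < n\delta_{(X^{1/s})'}$---are precisely the ones required to invoke Theorem \ref{extrapo} with the operator $T = [b, T_\Omega]$. What then remains is to produce, for each $w \in A_1(\bR^n)$, a bound of the form
\[
\|[b, T_\Omega](f)\|_{L^s_w} \lesssim \|f\|_{L^s_w},
\]
with the implicit constant depending only on $s$, $[w]_{A_1}$, $\|\Omega\|_{L^q(\mathbb{S}^{n-1})}$, and $\|b\|_{{\rm BMO}}$.

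To obtain this weighted bound I would use the standard inclusion $A_1(\bR^n) \subseteq A_r(\bR^n)$ for every $r > 1$, together with the monotonicity $[w]_{A_r} \leq [w]_{A_1}$. Since the hypothesis $s > q'$ forces $s/q' > 1$, any $w \in A_1(\bR^n)$ automatically belongs to $A_{s/q'}(\bR^n)$. Combining this with $\Omega \in L^q(\mathbb{S}^{n-1})$ satisfying the vanishing moment condition and $b \in {\rm BMO}(\bR^n)$, Theorem \ref{wei-sin-com} applied with exponent $p$ replaced by $s$ delivers the required $L^s_w$ estimate for $[b, T_\Omega]$.

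Finally, feeding this weighted bound into Theorem \ref{extrapo} yields the desired Herz-type estimate on $\dot{K}^{\alpha,p}_X(\bR^n)$. I do not anticipate any real obstacle, since both the weighted ingredient and the extrapolation machinery are directly available in the paper; the only point requiring care is to confirm that the constant in Theorem \ref{wei-sin-com} is indeed controlled in terms of $[w]_{A_1}$ alone (via the monotonicity of the Muckenhoupt classes), so that the quantitative hypothesis of Theorem \ref{extrapo} is literally satisfied.
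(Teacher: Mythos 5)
Your proposal is correct and is exactly the paper's argument: the paper derives this theorem as a direct consequence of Theorem \ref{extrapo} combined with the weighted estimate of Theorem \ref{wei-sin-com}, precisely as you describe. Your additional care about the inclusion $A_1(\bR^n)\subseteq A_{s/q'}(\bR^n)$ (valid since $s>q'$) and the dependence of constants on $[w]_{A_1}$ is a welcome elaboration of details the paper leaves implicit.
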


\subsection{Parametric Marcinkiewicz integrals}
This subsection considers the mapping property of the parametric Marcinkiewicz integrals on Herz-type spaces associated with ball quasi-Banach function spaces.

We first recall the definition of parametric Marcinkiewicz integrals. 

Suppose $\Omega$ is
a homogeneous function of degree zero and satisfies the vanishing moment condition (\ref{van}).
For $0<\rho<n$, H{\"o}rmander [\cite{hormander1960estimates}] defined the parametric Marcinkiewicz integral operator $\mu^\rho_\Omega$ of higher dimension as 
$$\mu^\rho_{\Omega} (f)(x):=\l(\int_0^\infty|F^\rho_{\Omega,t}(x)|^2 \frac{dt}{t^{2\rho+1}} \r)^{1/2},$$
where
$$F^\rho_{\Omega,t}(x):=\int_{|x-y|<t}\frac{\Omega(x-y)}{|x-y|^{n-\rho}}f(y)dy.$$
Note that the operator $\mu^1_\Omega$ was first
introduced by Stein [\cite{stein1958functions}].

In [\cite{sato1998remarks}], Sato established the following weighted $L^p$ boundedness of $\mu^\rho_\Omega$ for all
$0<\rho<n$.
\begin{theorem}\label{T-Mar}
	Let $0<\rho<n$ and $\Omega\in L^\infty(\mathbb{S}^{n-1})$. If $w\in A_p$, $1<p<\infty$, then
	$$\|\mu^\rho_\Omega (f)\|_{L^p_w}\lesssim\|f\|_{L^p_w}.$$
\end{theorem}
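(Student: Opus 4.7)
My plan is to recognize $\mu^\rho_\Omega$ as a vector-valued singular integral and then apply the weighted vector-valued Calder\'on--Zygmund theory. Observing that $F^\rho_{\Omega,t}(x) = (\psi_t * f)(x)$ with $\psi_t(z) = \frac{\Omega(z)}{|z|^{n-\rho}}\chi_{\{|z|<t\}}(z)$, I would rewrite
$$\mu^\rho_\Omega f(x) = \|\vec{K} * f(x)\|_{\mathcal{H}}, \qquad \mathcal{H} := L^2\l((0,\infty),\,{dt}/{t^{2\rho+1}}\r),$$
where the $\mathcal{H}$-valued kernel is $\vec{K}(z) := (\psi_t(z))_{t>0}$. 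The strategy then breaks into three steps: (a) unweighted $L^2$-boundedness, (b) a H\"ormander-type smoothness estimate for $\vec{K}$ in $\mathcal{H}$-norm, and (c) deduction of the weighted bound by standard weighted vector-valued Calder\'on--Zygmund theory.

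For step (a), Plancherel's theorem reduces matters to the uniform boundedness of the Fourier multiplier
$$G(\xi) := \int_0^\infty |\widehat{\psi_t}(\xi)|^2\,\frac{dt}{t^{2\rho+1}} = \int_0^\infty|m(t\xi)|^2\frac{dt}{t},\qquad m(\xi) := \int_{|u|\leq 1}\frac{\Omega(u)}{|u|^{n-\rho}}e^{-2\pi i u\cdot\xi}\,du,$$
where the last equality uses the dilation $\widehat{\psi_t}(\xi) = t^\rho m(t\xi)$. By homogeneity $G(\xi)$ depends only on $\xi/|\xi|$, so I only need bounds on $m$. The vanishing moment condition together with $|e^{-2\pi iu\cdot\xi}-1|\lesssim|u||\xi|$ gives $|m(\xi)|\lesssim|\xi|$ as $|\xi|\to 0$, while for $|\xi|$ large I would expand $\Omega$ in spherical harmonics $\Omega = \sum_k Y_k$ and use the standard Bessel-function decay to obtain $|m(\xi)|\lesssim|\xi|^{-\delta}$ for some $\delta>0$, with the constant controlled by $\|\Omega\|_{L^\infty}$. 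These two bounds make $G$ uniformly bounded and give (a).

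For step (b), I need to check $\int_{|x|>2|y|}\|\vec{K}(x-y)-\vec{K}(x)\|_{\mathcal{H}}\,dx \lesssim 1$ uniformly in $y$. I would split $\psi_t(x-y)-\psi_t(x)$ into a \emph{bulk} piece supported on $\{|x-y|<t\}\cap\{|x|<t\}$, equal to $\frac{\Omega(x-y)}{|x-y|^{n-\rho}}-\frac{\Omega(x)}{|x|^{n-\rho}}$, and a \emph{boundary} piece coming from the symmetric difference of the two indicator sets, which lies in a shell of thickness $\sim|y|$ near $|x|\sim t$. The boundary piece is tamed by Cauchy--Schwarz in $t$ after using that the shell has small volume; the bulk piece is handled using the homogeneity of degree zero of $\Omega$ together with $\|\Omega\|_{L^\infty}$, after pulling the $dt/t^{2\rho+1}$ integration inside.

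The main obstacle will be (b): since $\Omega$ lies only in $L^\infty(\mathbb{S}^{n-1})$, no pointwise modulus of continuity is available and one cannot imitate the classical rough-kernel CZ proof directly. The essential point is that the $t$-averaging encoded in the $\mathcal{H}$-norm provides the effective smoothing that a pure rough singular integral lacks, which is precisely why $\mu^\rho_\Omega$ has better weighted behavior than its scalar analogue. Once (a) and (b) are in place, the weighted $L^p_w$ bound for every $w\in A_p$ and $1<p<\infty$ follows from the standard weighted Calder\'on--Zygmund theory for $\mathcal{H}$-valued convolution operators (good-$\lambda$ against $M$, or equivalently Rubio de Francia extrapolation starting from the unweighted $L^2$ estimate).
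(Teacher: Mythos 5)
Your reduction to a vector-valued convolution and your step (a) are fine, but step (b) is where the argument genuinely breaks down, and the heuristic you offer to rescue it is false. Carry out the computation on the region $|x|>2|y|$: the ``bulk'' contribution to $\|\vec K(x-y)-\vec K(x)\|_{\mathcal H}$ is
\begin{equation*}
\left(\int_{t>\max(|x|,|x-y|)}\frac{dt}{t^{2\rho+1}}\right)^{1/2}\left|\frac{\Omega(x-y)}{|x-y|^{n-\rho}}-\frac{\Omega(x)}{|x|^{n-\rho}}\right|
\sim |x|^{-\rho}\left|\frac{\Omega((x-y)')}{|x-y|^{n-\rho}}-\frac{\Omega(x')}{|x|^{n-\rho}}\right|,
\end{equation*}
and after splitting off the harmless radial part $\Omega(x')\bigl(|x-y|^{-(n-\rho)}-|x|^{-(n-\rho)}\bigr)$ you are left with
\begin{equation*}
\int_{|x|>2|y|}\frac{|\Omega((x-y)')-\Omega(x')|}{|x|^{n}}\,dx
=\int_{2|y|}^{\infty}\int_{\mathbb{S}^{n-1}}\left|\Omega\left(\frac{ru'-y}{|ru'-y|}\right)-\Omega(u')\right|d\sigma(u')\,\frac{dr}{r}.
\end{equation*}
Since the angular displacement is $O(|y|/r)$, this is precisely the $L^1$-Dini integral $\int_0^1\omega_1(s)\,ds/s$, where $\omega_1$ is the $L^1(\mathbb{S}^{n-1})$ modulus of continuity of $\Omega$. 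Membership in $L^\infty(\mathbb{S}^{n-1})$ does not imply its finiteness ($\omega_1(s)\to 0$ for any integrable $\Omega$, but arbitrarily slowly --- take a mean-zero combination of indicators of sets with sufficiently irregular boundary), and estimating the difference crudely by $2\|\Omega\|_{L^\infty}|x|^{-n}$ yields a divergent logarithm. The $t$-averaging in $\mathcal H$ does \emph{not} supply the missing smoothness: the $dt/t^{2\rho+1}$ integration only tames the radial truncation and converts $|z|^{-(n-\rho)}$ into an effective $|z|^{-n}$; the angular roughness of $\Omega$ passes through the $\mathcal H$-norm untouched. So $\vec K$ is simply not a Hörmander kernel for general $\Omega\in L^\infty(\mathbb{S}^{n-1})$. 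A secondary issue: even if (b) held, the $L^1$-Hörmander condition alone is known to be insufficient for the good-$\lambda$ comparison with $M$ and hence for bounds with arbitrary $w\in A_p$; one needs pointwise (or $L^\infty$-Hörmander) kernel regularity there, which is even further out of reach for rough $\Omega$.

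For the record, the paper does not prove this theorem at all --- it quotes it from Sato [sato1998remarks]. Sato's argument sidesteps kernel regularity entirely: it runs on the Fourier side, via a dyadic decomposition of the measures $\psi_t$, the decay estimates you already derived in step (a), weighted Littlewood--Paley theory, and interpolation with change of measure in the Duoandikoetxea--Rubio de Francia style. That is the machinery a self-contained proof needs; your steps (a) and the general vector-valued framing are a reasonable starting point, but (b) must be replaced, not repaired.
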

Combining Theorem \ref{extrapo} with Theorem \ref{T-Mar}, the boundedness of $\mu^\rho_\Omega$ on Herz-type spaces associated with ball quasi-Banach function spaces can be stated as follows.
\begin{theorem}\label{T-Mar-1}
	Let $0<\rho<n$ and $\Omega\in L^\infty(\mathbb{S}^{n-1})$. Suppose  $\alpha\in\bR$, $1<p<\infty$, and $X$ is a ball quasi-Banach function space such that $X^{1/s}$ is a ball Banach function space and $M$ is bounded on the associate space $(X^{1/s})'$ for some $s$ satisfying $1<s<p<\infty$. If $(X^{1/s})'=(X^{1/s})^*$ and $-n\delta_{X^{1/s}}<s\alpha<n\delta_{(X^{1/s})'}$, where $\delta_{X^{1/s}}$ and $\delta_{(X^{1/s})'}$ are constants appeared in (\ref{G-2-3}), then for all $f\in \dot{K}^{\alpha,p}_X(\bR^n)$, we have
	$$\|\mu^\rho_\Omega (f)\|_{\dot{K}^{\alpha,p}_X}\lesssim\|f\|_{\dot{K}^{\alpha,p}_X}.$$
\end{theorem}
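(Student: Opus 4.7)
The plan is to deduce this theorem as an immediate corollary of the extrapolation result (Theorem \ref{extrapo}) applied to the weighted $L^s$ bound supplied by Sato's Theorem \ref{T-Mar}. The hypotheses imposed on $\alpha$, $p$, $X$, and $s$ in the statement of Theorem \ref{T-Mar-1}, namely that $X^{1/s}$ is a ball Banach function space, that $M$ is bounded on $(X^{1/s})'$, that $(X^{1/s})'=(X^{1/s})^*$, and that $-n\delta_{X^{1/s}}<s\alpha<n\delta_{(X^{1/s})'}$ with $1<s<p$, have been chosen so as to match verbatim the hypotheses of Theorem \ref{extrapo}. Thus the only thing left is to feed a suitable weighted bound into that extrapolation machinery.

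First I would verify the weighted input required by Theorem \ref{extrapo}: for the fixed exponent $s>1$ above, the operator $\mu^\rho_\Omega$ must satisfy
\begin{equation*}
\|\mu^\rho_\Omega(f)\|_{L^s_w}\lesssim\|f\|_{L^s_w}
\end{equation*}
for every $w\in A_1(\bR^n)$, with the implicit constant depending only on $s$ and $[w]_{A_1}$. Since $s>1$, the standard inclusion $A_1(\bR^n)\subset A_s(\bR^n)$ together with the elementary monotonicity $[w]_{A_s}\leq [w]_{A_1}$ reduces this to Theorem \ref{T-Mar} applied in the weight class $A_s$, which yields exactly the desired inequality with a constant controlled by $s$, $\|\Omega\|_{L^\infty(\mathbb{S}^{n-1})}$, $\rho$, and $[w]_{A_1}$.

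With this weighted bound in hand, Theorem \ref{extrapo} applies directly to $T=\mu^\rho_\Omega$ and the ball quasi-Banach function space $X$ with the given $\alpha$, $p$, $s$, producing the conclusion $\|\mu^\rho_\Omega(f)\|_{\dot{K}^{\alpha,p}_X}\lesssim\|f\|_{\dot{K}^{\alpha,p}_X}$ for every $f\in\dot{K}^{\alpha,p}_X(\bR^n)$. There is no substantive obstacle: all the technical content (the duality identification of Theorem \ref{HD-1}, density of simple functions, boundedness of $M$ on the convexified associate space, and the passage from weighted $L^s$ bounds to bounds on Herz-type spaces associated with ball quasi-Banach function spaces) is already packaged in Theorems \ref{extrapo} and \ref{T-Mar}, so the proof is reduced to observing the weight-class inclusion $A_1\subset A_s$.
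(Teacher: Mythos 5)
Your proposal is correct and follows exactly the route the paper takes: the theorem is stated there as an immediate consequence of combining the extrapolation result (Theorem \ref{extrapo}) with Sato's weighted bound (Theorem \ref{T-Mar}), with the hypotheses arranged to match those of Theorem \ref{extrapo} verbatim. Your explicit remark that the inclusion $A_1(\bR^n)\subset A_s(\bR^n)$ is what converts the $A_p$-weighted input into the $A_1$-weighted input required by the extrapolation theorem is the one small verification the paper leaves implicit, and you handle it correctly.
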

\subsection{Oscillatory singular integral operators}
This subsection is devoted to the boundedness of oscillatory singular integral operators on Herz-type spaces associated with ball quasi-Banach function spaces. First we recall the definition of oscillatory singular integral operators.

Let $K$ satisfy
\begin{equation}\label{Osc-1}
|K(x,y)|\lesssim\frac{1}{|x-y|^n},~~x\neq y,
\end{equation}
\begin{equation}\label{Osc-2}
|\nabla_xK(x,y)|+|\nabla_yK(x,y)|\lesssim\frac{1}{|x-y|^{n+1}},~~x\neq y.
\end{equation}

Let $P(x,y)$ be a real-valued polynomial on $\bR^n\times\bR^n$.
The oscillatory singular integral operator $T_{K,P}$ associated with $K$ and $P$ is defined as
\begin{equation*}
T_{K,P}(f)(x):={\rm p.v.}\int_{\bR^n\times\bR^n}e^{iP(x,y)}K(x,y)f(y)dy.
\end{equation*}

The study of oscillatory singular integral operators can be traced to the work of Ricci and Stein
[\cite{ricci1987harmonic}], Chanillo and Christ [\cite{chanillo1987weak}]. We refer the readers to [\cite{fan1995boundedness,lu1995oscillatory,luzhang1992weighted,pan1991hardy}] for more studies of oscillatory singular integral operators.
Particularly, the weighted norm inequalities for oscillatory singular integral
operators were obtained in [\cite{luzhang1992weighted}].
\begin{theorem}\label{Osc-wei}
	Let $1<p<\infty$, $P(x,y)$ be a real-valued polynomial on $\bR^n\times\bR^m$ and $K$ satisfies (\ref{Osc-1}) and (\ref{Osc-2}). If the Calder{\'o}n-Zygmund operator 
	\begin{equation}\label{OSc-3}
	T(f)(x):=\int_{\bR^n}K(x,y)f(y)dy
	\end{equation}
	is bounded on $L^2(\bR^n)$. Then for any $w\in A_p(\bR^n)$, $T_{P,K}$ is bounded on $L^p_w(\bR^n)$.
\end{theorem}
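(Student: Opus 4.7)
The plan is to reduce the weighted $L^p_w$ estimate to an unweighted one via a pointwise sharp maximal function inequality of the form
$$M^{\#}(T_{K,P}f)(x) \lesssim M_r(f)(x),$$
where $M^{\#}$ is the Fefferman-Stein sharp maximal function, $M_r(f):=(M(|f|^r))^{1/r}$ for some $r>1$, and the implicit constant is allowed to depend on the total degree $N$ of $P$ but not on its coefficients. Granting this, the conclusion follows from the standard sharp-function extrapolation machinery: given $w\in A_p(\bR^n)$, by the openness (self-improvement) of the $A_p$ class one may choose $r>1$ with $r<p$ and $w\in A_{p/r}(\bR^n)$; the Fefferman-Stein inequality (for $w\in A_\infty(\bR^n)$) together with the Muckenhoupt boundedness of $M$ on $L^{p/r}_w(\bR^n)$ then yields
$$\|T_{K,P}f\|_{L^p_w} \lesssim \|M^{\#}(T_{K,P}f)\|_{L^p_w} \lesssim \|M_r f\|_{L^p_w} \lesssim \|f\|_{L^p_w}.$$

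To prove the sharp function inequality I would induct on the total degree $N$ of $P(x,y)$. For $N\leq 1$ one writes $P(x,y)=a\cdot x+b\cdot y+c$ and factors $e^{iP}$, so that $T_{K,P}f(x)=e^{i(a\cdot x+c)}T(e^{ib\cdot(\cdot)}f)(x)$ is merely a pre- and post-modulation of the Calder\'on-Zygmund operator $T$; the assumed $L^2(\bR^n)$-boundedness of $T$ together with the kernel bounds (\ref{Osc-1}), (\ref{Osc-2}) delivers the classical sharp function estimate. For $N\geq 2$, I would fix a cube $Q$ with center $x_0$, decompose $f=f\chi_{2Q}+f\chi_{(2Q)^c}=:f_1+f_2$, and test the oscillation of $T_{K,P}f$ on $Q$ against the constant $c_Q:=T_{K,P_{x_0}}f_2(x_0)$, where $P_{x_0}$ denotes the Taylor polynomial of $P$ about $(x_0,x_0)$ of degree $N-1$. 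The $f_1$-contribution is absorbed into $M_r(f)(x_0)$ using the unweighted $L^r$-boundedness of $T_{K,P}$ from Chanillo-Christ, whose constants are uniform in the coefficients of $P$. The $f_2$-contribution splits into a Calder\'on-Zygmund-type kernel difference, handled via (\ref{Osc-2}) by the standard argument, and a phase-difference term, which one recognises as an oscillatory singular integral with phase of effective degree at most $N-1$ and controls by the inductive hypothesis.

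The main obstacle is arranging this induction so that the constants stay uniform in the coefficients of the top-degree homogeneous part of $P$, which is essential for the induction to close. This is resolved by a Chanillo-Christ-type rescaling argument: one selects the side length of $Q$ so that the homogeneous degree-$N$ part of $P$ has unit size on the relevant ball, after which the dilation invariance of the $A_p$ class (with explicit constant tracking) and the inductive hypothesis applied to the rescaled, lower-effective-degree polynomial close the loop. The weighted extrapolation ingredients (self-improvement of $A_p$, Fefferman-Stein, Muckenhoupt) are classical; the genuinely delicate point is the bookkeeping for this rescaling so that the sharp function estimate holds with a single constant depending only on $N$.
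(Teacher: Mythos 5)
First, a point of reference: the paper does not prove this statement at all; it is quoted verbatim as a known result from Lu--Zhang \cite{luzhang1992weighted}, so there is no in-paper proof to compare against. Judged on its own terms, your strategy (reduce to a sharp-function estimate $M^{\#}(T_{K,P}f)\lesssim M_r f$ with constant depending only on $\deg P$, then conclude by Fefferman--Stein, the openness of $A_p$, and Muckenhoupt's theorem) is the standard and correct framework for this kind of theorem, and the weighted-extrapolation half of your argument is fine modulo the usual a priori finiteness hypothesis needed to apply Fefferman--Stein (handled by density, since $T_{K,P}f\in L^2$ for nice $f$).

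The genuine gap is in the inductive step, specifically in the treatment of the far part $f_2=f\chi_{(2Q)^c}$. After subtracting $c_Q=T_{K,P_{x_0}}f_2(x_0)$, the ``phase-difference term'' is $\int_{(2Q)^c}\bigl[e^{iP(x,y)}-e^{iP_{x_0}(x_0,y)}\bigr]K(x_0,y)f(y)\,dy$, and the discarded top-degree part of the phase grows like $|y|^{N}$ on $(2Q)^{c}$; the bound $|e^{i\theta}-e^{i\theta'}|\le|\theta-\theta'|$ is therefore useless outside a bounded number of dyadic annuli, even after your normalization making the leading coefficients of unit size on $Q$. This term is \emph{not} ``an oscillatory singular integral with phase of effective degree at most $N-1$'' in any sense to which the inductive hypothesis (a sharp-function bound for lower-degree phases) can be applied pointwise. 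What is actually needed here --- and what is missing from your sketch --- is the decomposition of $(2Q)^{c}$ into dyadic annuli $2^{k}Q$ together with van der Corput/Ricci--Stein $L^2$ decay estimates of the form $\|T_k\|_{L^2\to L^2}\lesssim 2^{-\epsilon k}$ for the pieces where the phase is large, transferred to the weighted setting via the reverse H\"older inequality (or interpolation with change of measure). This oscillatory-decay ingredient is the heart of the Lu--Zhang proof and cannot be replaced by the mean-value bound plus induction. Two smaller points: the uniform-in-coefficients unweighted $L^r$ bounds you invoke are due to Ricci--Stein (Chanillo--Christ is the weak $(1,1)$ endpoint), and your base case should also dispose of all pure-$x$ and pure-$y$ monomials of every degree (not just degree $\le 1$), since these factor out as modulations and do not contribute to the ``effective'' degree on which the induction runs.
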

The above theorem, together with Theorem \ref{extrapo}, guarantees the boundedness of $T_{P,K}$ on Herz-type spaces associated with ball quasi-Banach function spaces.
\begin{theorem}\label{T-Mar-2}
	Let $1<p<\infty$, $P(x,y)$ be a real-valued polynomial on $\bR^n\times\bR^m$ and $K$ satisfies (\ref{Osc-1}) and (\ref{Osc-2}). Suppose  $\alpha\in\bR$ and $X$ is a ball quasi-Banach function space such that $X^{1/s}$ is a ball Banach function space and $M$ is bounded on the associate space $(X^{1/s})'$ for some $s$ satisfying $1<s<p<\infty$. If $(X^{1/s})'=(X^{1/s})^*$, $-n\delta_{X^{1/s}}<s\alpha<n\delta_{(X^{1/s})'}$, where $\delta_{X^{1/s}}$ and $\delta_{(X^{1/s})'}$ are constants appeared in (\ref{G-2-3}), and the Calder{\'o}n-Zygmund operator defined in (\ref{OSc-3}) is bounded on $L^2(\bR^n)$, then for all $f\in \dot{K}^{\alpha,p}_X(\bR^n)$, we have
	$$\|T_{K,P}(f)\|_{\dot{K}^{\alpha,p}_X}\lesssim\|f\|_{\dot{K}^{\alpha,p}_X}.$$
\end{theorem}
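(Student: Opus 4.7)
The plan is to derive Theorem \ref{T-Mar-2} as a direct consequence of the extrapolation theorem (Theorem \ref{extrapo}) combined with the weighted $L^p$ boundedness of oscillatory singular integrals (Theorem \ref{Osc-wei}). The structural hypotheses on $X$, namely that $X^{1/s}$ is a ball Banach function space with $M$ bounded on $(X^{1/s})'$, together with $(X^{1/s})'=(X^{1/s})^*$ and $-n\delta_{X^{1/s}}<s\alpha<n\delta_{(X^{1/s})'}$, are exactly those required to apply Theorem \ref{extrapo}. Hence the task reduces to producing an $A_1$-weighted estimate at a suitable exponent.

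First, I would pick the exponent $s$ given by the hypothesis (so that $1<s<p<\infty$) and verify that $T_{K,P}$ satisfies the scalar weighted inequality
\begin{equation*}
\|T_{K,P}(f)\|_{L^s_w}\lesssim \|f\|_{L^s_w}
\end{equation*}
for every $w\in A_1(\mathbb{R}^n)$, with implicit constant depending only on $s$ and $[w]_{A_1}$. This is where Theorem \ref{Osc-wei} enters. Since $A_1(\mathbb{R}^n)\subseteq A_s(\mathbb{R}^n)$, with $[w]_{A_s}\leq[w]_{A_1}$, and the kernel $K$ satisfies the assumptions (\ref{Osc-1}) and (\ref{Osc-2}) while the associated Calderón-Zygmund operator $T$ in (\ref{OSc-3}) is bounded on $L^2(\mathbb{R}^n)$, Theorem \ref{Osc-wei} yields precisely this weighted bound for the real-valued polynomial phase $P(x,y)$. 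The implicit constant depends only on $s$ and the $A_s$-constant of $w$, hence ultimately on $s$ and $[w]_{A_1}$.

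Second, having checked the extrapolation input, I would simply invoke Theorem \ref{extrapo} with the operator $T=T_{K,P}$, the ball quasi-Banach function space $X$, the exponent $p$, and the parameter $s$ as chosen above. All the assumptions of Theorem \ref{extrapo} coincide verbatim with those of Theorem \ref{T-Mar-2}, so the conclusion
\begin{equation*}
\|T_{K,P}(f)\|_{\dot{K}^{\alpha,p}_X}\lesssim \|f\|_{\dot{K}^{\alpha,p}_X}
\end{equation*}
for all $f\in \dot{K}^{\alpha,p}_X(\mathbb{R}^n)$ follows immediately.

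The proof is short and the only genuine point of care is the verification that the dependence of the weighted constant on $[w]_{A_1}$, rather than $[w]_{A_s}$, is legitimate; this is transparent once one notes the embedding $A_1\hookrightarrow A_s$ together with the monotonicity of the $A_p$-characteristic. No further obstacle arises, because the heavy lifting (density of simple functions in $X^{1/s}$, absolute continuity of the norm, identification of associate spaces, boundedness of $M$ on the associate space of the Herz-level space) is already absorbed into the proof of Theorem \ref{extrapo}. In summary, the argument is a direct extrapolation from a classical weighted inequality, mirroring exactly the proofs of Theorems \ref{wei-singu}, \ref{wei-singu-com} and \ref{T-Mar-1}.
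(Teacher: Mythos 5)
Your proposal is correct and follows exactly the route the paper intends: the paper derives Theorem \ref{T-Mar-2} directly from the weighted inequality of Theorem \ref{Osc-wei} (applied at the exponent $s$ with $A_1\subseteq A_s$) combined with the extrapolation result of Theorem \ref{extrapo}. Your extra remark on the monotonicity of the $A_p$-characteristic, justifying the dependence on $[w]_{A_1}$, is a valid and welcome clarification of a point the paper leaves implicit.
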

\section{Some examples}
In this section, we apply all above results to three concrete examples of Herz-type spaces associated with ball quasi-Banach function spaces, namely, Herz spaces with variable exponent, mixed Herz spaces and Herz-Lorentz spaces, respectively, in Subsections 6.1-6.3.
\subsection{Herz spaces with variable exponent}
We first recall some basic theories on variable exponent Lebesgue spaces. Let $q(\cdot):\bR^n\rightarrow (0,\infty)$ be a measurable function. Then the variable Lebesgue space
$L^{q(\cdot)}(\bR^n)$ is defined to be the set of all measurable functions $f$ on $\bR^n$ such that
\begin{eqnarray}
\|f\|_{L^{q(\cdot)}}:=\inf\l\{\lambda\in(0,\infty):\int_{\bR^n}[|f(x)|/\lambda]^{q(x)}dx\leq1\r\}<\infty.
\end{eqnarray}
It was pointed in [\cite{wang2021weak}] that  $L^{q(\cdot)}(\bR^n)$ is a ball quasi-Banach function space whenever $q(\cdot):\bR^n\rightarrow (0,\infty)$.
Moreover, if $q(x)\geq1$ for almost every $x\in\bR^n$, then $L^{q(\cdot)}(\bR^n)$ is a Banach function space (see,
for instance, [\cite{diening2009function}, Theorem 3.2.13]), and in this case, the associate space of $L^{q(\cdot)}(\bR^n)$ is the variable Lebesgue space $L^{q'(\cdot)}(\bR^n)$, where $q'(\cdot)$ satisfies ${1}/{q(\cdot)}+{1}/{q'(\cdot)}=1$ (see [\cite{cruz2013variable}, Proposition 2.37]).
We refer the reader to [\cite{diening2011lebesgue,kempka2014lorentz,yang2017survey}] for more
details on variable Lebesgue spaces.

By taking $X=L^{q(\cdot)}(\bR^n)$ in Definition \ref{D-2-1-1}, where $q(\cdot):\bR^n\rightarrow (0,\infty)$ is a measurable function, we obtain the homogeneous Herz space with variable exponent $\dot{K}^{\alpha,p}_{q(\cdot)}(\bR^n)$ and non-homogeneous Herz space with variable exponent ${K}^{\alpha,p}_{q(\cdot)}(\bR^n)$. 

For any measurable function $q(\cdot):\bR^n\rightarrow (0,\infty)$, let
$$q_{-}={\rm ess}\inf_{x\in\bR^n}q(x),~~~{\rm and}~~~q_{+}={\rm ess}\sup_{x\in\bR^n}q(x).$$

Then if $1\leq q_{-}\leq q_+<\infty$, the dual space of $L^{q(\cdot)}(\bR^n)$ is $L^{q'(\cdot)}(\bR^n)$ (see [\cite{cruz2013variable}, Theorem 2.80]).

A measurable function $q(\cdot):\bR^n\rightarrow (0,\infty)$ is said to be globally log-H{\"o}lder
continuous if there exists a $q_{\infty}>0$ such that, for any $x,y\in\bR^n$,
\begin{equation*}
|q(x)-q(y)|\lesssim\frac{1}{\log(e+1/|x-y|)}
\end{equation*}
and 
\begin{equation*}
|q(x)-q_{\infty}|\lesssim\frac{1}{\log(e+|x|)},
\end{equation*}
where the implicit positive constants are independent of $x,y\in\bR^n$.

The boundedness of Hardy-Littlewood maximal operator $M$ on $L^{q(\cdot)}(\bR^n)$ was obtained in [\cite{diening2011lebesgue}, Theorem 4.38]. For the readers' convenience, we state the result as follows.
\begin{lemma}\label{H-V-l}
	Let $\alpha\in\bR$, $0<p<\infty$ and $q(\cdot):\bR^n\rightarrow (0,\infty)$ be a globally log-H{\"o}lder continuous function satisfying
	$1<q_{-}<q_{+}<\infty$. then $M$ is bounded on $L^{q(\cdot)}(\bR^n)$.
\end{lemma}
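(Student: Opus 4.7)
The plan is to follow the Diening--Cruz-Uribe--Fiorenza--Neugebauer approach to the boundedness of $M$ on variable Lebesgue spaces. The parameters $\alpha$ and $p$ in the statement play no role in the conclusion, so the task reduces to the sublinear estimate $\|Mf\|_{L^{q(\cdot)}}\lesssim\|f\|_{L^{q(\cdot)}}$. By homogeneity of the Luxemburg norm, I normalize so that the modular satisfies $\int_{\bR^n}|f(y)|^{q(y)}\,dy\leq 1$, and the problem becomes showing $\int_{\bR^n}(Mf(x))^{q(x)}\,dx\lesssim 1$.

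The heart of the argument is a pointwise key estimate: there exist $q_0\in(1,q_-)$ and a fixed function $h\in L^1(\bR^n)$, depending only on $q_\pm$ and the log-Hölder constants, such that
$$\bigl(Mf(x)\bigr)^{q(x)} \lesssim M\bigl(|f|^{q(\cdot)/q_0}\bigr)(x)^{q_0} + h(x).$$
To prove this I would split $f = f\chi_{\{|f|>1\}}+f\chi_{\{|f|\leq 1\}}$ and analyse small balls ($|B|\leq 1$) and large balls separately. On the $|f|>1$ piece one uses that $|f(y)|\leq |f(y)|^{q(y)/q_0}$, while on the $|f|\leq 1$ piece the local log-Hölder condition controls $|f(y)|^{q(x)-q(y)}$ on small balls uniformly in $f$. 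For large balls the log-Hölder decay at infinity $|q(x)-q_\infty|\lesssim 1/\log(e+|x|)$ converts the average into one involving $q_\infty$ up to the integrable error $h(x)\sim (e+|x|)^{-Nq_-}$ for $N$ chosen large enough.

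Once the pointwise key estimate is secured, the proof concludes by integrating it over $\bR^n$ and applying the classical Hardy--Littlewood bound of $M$ on $L^{q_0}(\bR^n)$, which is valid because $q_0>1$, to the auxiliary function $g:=|f|^{q(\cdot)/q_0}$. This function satisfies $\|g\|_{L^{q_0}}^{q_0}=\int_{\bR^n}|f(x)|^{q(x)}\,dx\leq 1$ by the normalization, so the first term on the right-hand side integrates to an $O(1)$ quantity, and the second integrates to $\|h\|_{L^1}=O(1)$. The main obstacle is the pointwise key estimate itself: the interaction between the variable exponent and the averaging operation is delicate, and only the full \emph{global} log-Hölder hypothesis---both the local modulus of continuity and the decay at infinity---guarantees that the error term $h$ is genuinely integrable uniformly across all scales and all $f$ with unit modular norm.
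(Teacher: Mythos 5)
The paper does not prove this lemma at all: it is quoted verbatim from the literature, namely [Diening--Harjulehto--H\"ast\"o--R\r{u}\v{z}i\v{c}ka, Theorem 4.3.8], so there is no internal argument to compare against. Your outline is an accurate reproduction of the standard proof from that source (and from Cruz-Uribe--Fiorenza--Neugebauer): reduce by homogeneity to a modular estimate, establish the pointwise key inequality $\bigl(Mf(x)\bigr)^{q(x)}\lesssim M\bigl(|f|^{q(\cdot)/q_0}\bigr)(x)^{q_0}+h(x)$ with $1<q_0<q_-$ and $h\in L^1$, and close the argument with the classical boundedness of $M$ on $L^{q_0}(\bR^n)$. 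The architecture is correct, including the split into $|f|>1$ versus $|f|\le 1$ and small versus large balls, the use of $|f(y)|\le|f(y)|^{q(y)/q_0}$ (valid since $q_0<q_-$), and the role of the decay condition at infinity in making $h$ integrable. The only caveat is that what you present is a proof plan rather than a proof: the key pointwise estimate, which you correctly identify as the heart of the matter, is asserted with a strategy but not carried out, and that estimate is where essentially all the technical work of the theorem lives. As a justification of the lemma for the purposes of this paper, a citation (as the authors give) or your sketch are equally acceptable; as a self-contained proof, the key estimate would still need to be proved in full.
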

By using Theorem \ref{T-3} and Lemma \ref{H-V-l}, we have the following result on the boundedness of $M$ on Herz spaces with variable exponent.
\begin{theorem}\label{H-V}
	Let $\alpha\in\bR$, $0<p<\infty$ and $q(\cdot):\bR^n\rightarrow (0,\infty)$ be a globally log-H{\"o}lder continuous function satisfying
	$1<q_{-}<q_{+}<\infty$. If $-\delta_{q(\cdot)}<\alpha<-\delta_{q'(\cdot)}$, where $\delta_{q(\cdot)}$ and $\delta_{q'(\cdot)}$ are constants appeared in (\ref{G-2-3}). then $M$ is bounded on $\dot{K}^{\alpha,p}_{q(\cdot)}(\bR^n)$.
\end{theorem}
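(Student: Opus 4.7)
The plan is to verify that $X=L^{q(\cdot)}(\mathbb{R}^n)$ satisfies every hypothesis of Theorem \ref{T-3}, and then invoke that theorem. The statement of the theorem to be proved is, after a natural reading, that $-n\delta_{q(\cdot)}<\alpha<n\delta_{q'(\cdot)}$ (matching the form of Theorem \ref{T-3}); with this understanding the argument is a bookkeeping exercise that collects the standard facts about variable exponent Lebesgue spaces.

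First I would verify that $X=L^{q(\cdot)}(\mathbb{R}^n)$ is a ball Banach function space. Since $1<q_{-}\le q_{+}<\infty$, the remark preceding Lemma \ref{H-V-l} together with the cited reference (\cite{diening2009function}, Theorem 3.2.13) tells us $X$ is a Banach function space (hence, a fortiori, a ball Banach function space). Next, the associate space identification $X'=L^{q'(\cdot)}(\mathbb{R}^n)$ (\cite{cruz2013variable}, Proposition 2.37) applies, where $q'(\cdot)$ is the pointwise conjugate exponent; note that global log-Hölder continuity of $q(\cdot)$ is inherited by $q'(\cdot)$, and the pointwise bounds $1<(q')_{-}\le (q')_{+}<\infty$ hold as well.

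Second, I would check the condition $X'=X^{*}$. By Remark \ref{R-3-1}, this is equivalent to $X$ having an absolutely continuous norm, which is a well known property of $L^{q(\cdot)}(\mathbb{R}^n)$ under our hypotheses (again from \cite{cruz2013variable}, Theorem 2.80). Third, Lemma \ref{H-V-l} gives the boundedness of $M$ on $X=L^{q(\cdot)}(\mathbb{R}^n)$ directly from the global log-Hölder continuity of $q(\cdot)$ and $1<q_{-}\le q_{+}<\infty$. Since the same hypotheses hold for $q'(\cdot)$, Lemma \ref{H-V-l} also ensures the boundedness of $M$ on the associate space $X'=L^{q'(\cdot)}(\mathbb{R}^n)$, which is exactly the hypothesis used in Lemma \ref{L-2-5} to produce the constants $\delta_{X}=\delta_{q(\cdot)}$ and $\delta_{X'}=\delta_{q'(\cdot)}$.

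Finally, the given assumption on $\alpha$ supplies the remaining hypothesis $-n\delta_{X}<\alpha<n\delta_{X'}$ of Theorem \ref{T-3}. Applying Theorem \ref{T-3} with $X=L^{q(\cdot)}(\mathbb{R}^n)$ then yields the desired boundedness
\[
\|M(f)\|_{\dot{K}^{\alpha,p}_{q(\cdot)}}\lesssim \|f\|_{\dot{K}^{\alpha,p}_{q(\cdot)}}
\]
for all $f\in \dot{K}^{\alpha,p}_{q(\cdot)}(\mathbb{R}^n)$. There is no serious obstacle here; the only subtle point is confirming that $q'(\cdot)$ inherits the log-Hölder condition (so that Lemma \ref{H-V-l} applies to the associate space too), but this is immediate from the identity $q'(x)=q(x)/(q(x)-1)$ and the uniform lower bound $q(x)\ge q_{-}>1$, which keeps the denominator bounded away from zero.
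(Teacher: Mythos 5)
Your proposal is correct and follows exactly the route the paper takes: the paper derives Theorem \ref{H-V} in one line by combining Theorem \ref{T-3} with Lemma \ref{H-V-l}, and your write-up simply makes explicit the verification of the hypotheses (ball Banach function space, $X'=X^*$ via absolute continuity of the norm, boundedness of $M$ on both $L^{q(\cdot)}$ and $L^{q'(\cdot)}$, and the corrected reading $-n\delta_{q(\cdot)}<\alpha<n\delta_{q'(\cdot)}$ of the evidently misprinted condition on $\alpha$). No gaps.
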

The previous theorem recovers part of the results in [\cite{izuki2010boundedness}, Theorem 3.3], where the boundedness for a class of sublinear operators was obtained.

Theorem \ref{extrapo} and Theorem \ref{H-V} yield the following extrapolation theorem on Herz spaces with variable exponent.
\begin{theorem}\label{extrapo-var}
	Let  $\alpha\in\bR$, $0<s<p<\infty$, and $q(\cdot):\bR^n\rightarrow (0,\infty)$ be a globally log-H{\"o}lder continuous function such that
	$s<q_{-}<q_{+}<\infty$ for and $-n\delta_{{q(\cdot)/s}}<s\alpha<n\delta_{(q(\cdot)/s)'}$, where $\delta_{{q(\cdot)/s}}$ and $\delta_{(q(\cdot)/s)'}$ are constants appeared in (\ref{G-2-3}). Let
	$T$ be an operator satisfying, for any given $w\in A_1(\bR^n)$ and any $f\in L^s_w(\bR^n)$,
	\begin{equation*}
	\|T(f)\|_{L^s_w}\lesssim \|f\|_{L^s_w},
	\end{equation*}
	where the implicated constant is independent on $f$, but depends on $s$ and $[w]_{A_1}$. Then for any $f\in \dot{K}^{\alpha,p}_{q(\cdot)}(\bR^n)$,
	\begin{equation*}
	\|T(f)\|_{\dot{K}^{\alpha,p}_{q(\cdot)}}\lesssim \|f\|_{\dot{K}^{\alpha,p}_{q(\cdot)}}.
	\end{equation*}
\end{theorem}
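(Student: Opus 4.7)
The plan is to deduce this theorem as a direct specialization of the abstract extrapolation result (Theorem \ref{extrapo}) to the ball quasi-Banach function space $X := L^{q(\cdot)}(\bR^n)$, so that $X^{1/s} = L^{q(\cdot)/s}(\bR^n)$ and $(X^{1/s})' = L^{(q(\cdot)/s)'}(\bR^n)$. The whole task then reduces to checking the four structural hypotheses of Theorem \ref{extrapo}: (a) $X^{1/s}$ is a ball Banach function space, (b) $M$ is bounded on $(X^{1/s})'$, (c) $(X^{1/s})' = (X^{1/s})^*$, and (d) the $\delta$-range condition on $s\alpha$. Condition (d) is built into the hypothesis of the theorem being proved, so essentially only (a)--(c) need verification.

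For (a), I would observe that $s < q_{-}$ forces $(q(\cdot)/s)_{-} = q_{-}/s > 1$, so $L^{q(\cdot)/s}$ is a (classical) Banach function space, hence a ball Banach function space, whose associate space is $L^{(q(\cdot)/s)'}$. For (b), the key is to verify that the conjugate exponent $(q(\cdot)/s)'$ is globally log-H\"older continuous and satisfies $1 < ((q(\cdot)/s)')_{-} \le ((q(\cdot)/s)')_{+} < \infty$. Log-H\"older continuity of $q(\cdot)/s$ is immediate from that of $q(\cdot)$, and transfers to the conjugate exponent by a standard manipulation, using that $q(\cdot)/s$ is bounded above and bounded away from $1$. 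The required bounds on the infimum and supremum of $(q(\cdot)/s)'$ follow from the decreasing map $t \mapsto t/(t-1)$ on $(1,\infty)$ together with $s < q_{-} \le q_{+} < \infty$; explicitly, $((q(\cdot)/s)')_{-} = q_{+}/(q_{+}-s) > 1$ and $((q(\cdot)/s)')_{+} = q_{-}/(q_{-}-s) < \infty$. Lemma \ref{H-V-l}, applied to the exponent $(q(\cdot)/s)'$, then yields the boundedness of $M$ on $L^{(q(\cdot)/s)'}$.

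For (c), I would invoke Remark \ref{R-3-1}: the condition $1 \le (q(\cdot)/s)_{-} \le (q(\cdot)/s)_{+} < \infty$ guarantees that $L^{q(\cdot)/s}$ has an absolutely continuous norm, which is precisely the criterion ensuring the associate and dual spaces coincide. With (a)--(d) in hand for $X = L^{q(\cdot)}$, Theorem \ref{extrapo} applies and delivers the desired inequality $\|T f\|_{\dot{K}^{\alpha,p}_{q(\cdot)}} \lesssim \|f\|_{\dot{K}^{\alpha,p}_{q(\cdot)}}$ directly. There is no substantive obstacle here; the only mildly technical point is the routine transfer of log-H\"older continuity from $q(\cdot)$ to $(q(\cdot)/s)'$, which I would simply cite from the standard variable-exponent literature rather than grind through in detail.
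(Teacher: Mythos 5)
Your proposal is correct and follows essentially the same route as the paper: the paper's own (very brief) proof also specializes Theorem \ref{extrapo} to $X=L^{q(\cdot)}(\bR^n)$, citing Theorem \ref{H-V} for the maximal-operator input and leaving the verification of the remaining hypotheses "to the interested readers." Your write-up simply fills in those details (the conjugate-exponent bounds, log-H\"older transfer, and absolute continuity of the norm) explicitly and correctly.
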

\begin{proof}
	In view of $-n\delta_{{q(\cdot)/s}}<s\alpha<n\delta_{(q(\cdot)/s)'}$, we know that $M$ is bounded on $\dot{K}^{-s\alpha,(p/s)'}_{(q(\cdot)/s)'}(\bR^n)$ by using Theorem \ref{H-V}. Then using Theorem \ref{extrapo} and some basic properties of variable spaces mentioned above, we can finish the proof. The details is left to the interested readers.
\end{proof}
One can see that all the results in Section 5 hold for Herz spaces with variable exponent by using Theorem \ref{extrapo-var}.
\subsection{Mixed Herz spaces}
In this subsetion, the letter $\vec{q}$ denotes $n$-tuples of the numbers in $[0,\infty]$,~($n\geq2$),~$\vec{q}=(q_1,\cdots,q_n)$. By definition, the inequality, for example, $0<\vec{q}<\infty$ means $0<q_i<\infty$ for all $i$. For $1\leq\vec{q}\leq\infty$, we denote $\vec{q}'=(q'_1,\cdots,q'_n)$, where $q'_i$ satisfies
${1}/{q_i}+{1}/{q'_i}=1$.
We begin with recalling the notion of mixed-norm Lebesgue spaces.
\begin{definition}\label{d2-1}
	Let ~$\vec{q}=(q_1,\cdots,q_n)\in(0,\infty)^n$. Then the mixed-norm Lebesgue norm $\|\cdot\|_{L^{\vec{q}}}$ is defined by
	\begin{eqnarray*}
		\|f\|_{L^{\vec{q}}}
		:= \left(\int_{\bR}\cdots \left(\int_{\bR}\left(\int_{\bR}|f(x_1,x_2,\cdots,x_n)|^{q_1}dx_1\right)^{\frac{q_2}{q_1}}dx_2\right)^{\frac{q_3}{q_2}}\cdots dx_n\right)^{\frac{1}{q_n}}
	\end{eqnarray*}
	where $f: \bR^n \rightarrow \mathbb{C}$ is a measurable function. If $q_j=\infty$ for some $j=1,\cdots,n$, then we have to make appropriate modifications. We define the mixed-norm Lebesgue space $L^{\vec{q}}(\bR^n)$
	to be the set of all $f\in \mathcal{M}(\bR^n)$  with $\|f\|_{L^{\vec{q}}}<\infty$.
\end{definition}
The space $L^{\vec{q}}(\bR^n)$ was studied by Benedek and Panzone [\cite{1961The}] in 1961, which can be traced back to
H{\"o}rmander [\cite{hormander1960estimates}]. From the definition of $L^{\vec{q}}(\bR^n)$, it is easy to deduce that the mixed-norm Lebesgue
space $L^{\vec{q}}(\bR^n)$ is a ball quasi-Banach function space. If further $\vec{q}=(q_1,\cdots,q_n)\in[1,\infty)^n$, then $L^{\vec{q}}(\bR^n)$ is precisely a ball Banach function space, and in this case, the associate space of $L^{\vec{q}}(\bR^n)$ is its dual space $L^{\vec{q}'}(\bR^n)$ (see [\cite{1961The}]).

By substituting $X$ with $L^{\vec{q}}(\bR^n)$ in Definition \ref{D-2-1-1}, we recover the non-homogeneous mixed Herz space $\dot{K}^{\alpha,p}_{\vec{q}}(\bR^n)$ and homogeneous mixed Herz space ${K}^{\alpha,p}_{\vec{q}}(\bR^n)$ defined in [\cite{wei2021characterization,wei-2021}].

Obviously, $\delta_{\vec{q}}=\frac{1}{n}{\sum_{i=1}^n1/q_i}$. Noting that $M$ is bounded on $L^{\vec{q}}(\bR^n)$ for $1<\vec{q}<\infty$ (see [\cite{nogayama2019mixed}]), we have the boundedness of $M$ on $\dot{K}^{\alpha,p}_{\vec{q}}(\bR^n)$ as a consequence of Theorem \ref{T-3}:
\begin{theorem}\label{T-mH}
	Let $\alpha\in\bR$, $0<p<\infty, 1<\vec{q}<\infty$ such that $-\sum_{i=1}^n{1}/{q_i}<\alpha<\sum_{i=1}^n{1}/{q'_i}.$ Then the Hardy-Littlewood maximal operator $M$ is bounded on $\dot{K}^{\alpha,p}_{\vec{q}}(\bR^n)$.
\end{theorem}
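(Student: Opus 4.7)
The plan is to apply Theorem \ref{T-3} directly with the choice $X = L^{\vec{q}}(\bR^n)$. So the work reduces to checking that every hypothesis of Theorem \ref{T-3} is satisfied in this concrete setting, and that the condition $-n\delta_X < \alpha < n\delta_{X'}$ unwraps exactly to $-\sum_{i=1}^n 1/q_i < \alpha < \sum_{i=1}^n 1/q_i'$.

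First I would recall, from the opening paragraph of Subsection~6.2, that for $1 < \vec{q} < \infty$ the mixed-norm Lebesgue space $L^{\vec{q}}(\bR^n)$ is a ball Banach function space whose associate space is $L^{\vec{q}'}(\bR^n)$, and the latter coincides with its dual by the Benedek--Panzone duality theorem [\cite{1961The}]. This immediately gives the structural hypothesis $X' = X^*$ required by Theorem \ref{T-3}. Next, the boundedness of the Hardy--Littlewood maximal operator on $L^{\vec{q}}(\bR^n)$ for $1 < \vec{q} < \infty$ is Nogayama's theorem [\cite{nogayama2019mixed}]; since the constraint $1 < \vec{q} < \infty$ is symmetric in $\vec{q}$ and $\vec{q}'$, the same result gives $M$ bounded on $X' = L^{\vec{q}'}(\bR^n)$ as well.

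The one genuine computation is identifying the constant $\delta_X$ of Lemma \ref{L-2-5}. The key observation is that for any ball $B(x,r) \subseteq \bR^n$ one has $\|\chi_{B(x,r)}\|_{L^{\vec{q}}} \sim r^{\sum_{i=1}^n 1/q_i}$, by iterated integration in each coordinate. Combined with $|B(x,r)| \sim r^n$, this gives the ratio $\|\chi_{B(x,r)}\|_{L^{\vec{q}}}/\|\chi_{B(x,R)}\|_{L^{\vec{q}}} \sim (|B(x,r)|/|B(x,R)|)^{\delta}$ with $\delta = \frac{1}{n}\sum_{i=1}^n 1/q_i$; a routine check that arbitrary measurable $E \subseteq B$ can be handled by the same exponent (as asserted immediately before the theorem, and also established in [\cite{wei-2021}]) confirms $\delta_{\vec{q}} = \frac{1}{n}\sum_{i=1}^n 1/q_i$. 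Applying the same reasoning to the conjugate exponent yields $\delta_{\vec{q}'} = \frac{1}{n}\sum_{i=1}^n 1/q_i'$.

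With these identifications in hand, the hypothesis $-n\delta_X < \alpha < n\delta_{X'}$ of Theorem \ref{T-3} becomes precisely the stated range $-\sum_{i=1}^n 1/q_i < \alpha < \sum_{i=1}^n 1/q_i'$. Theorem \ref{T-3} then delivers the boundedness of $M$ on $\dot{K}^{\alpha,p}_{\vec{q}}(\bR^n)$, completing the proof. There is no serious obstacle here: the proof is essentially a matter of unpacking the general ball quasi-Banach framework into the concrete mixed-norm setting, and the only arithmetic needed is the scaling of $\|\chi_B\|_{L^{\vec{q}}}$ in a ball.
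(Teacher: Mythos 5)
Your proposal is correct and follows exactly the paper's own route: the paper likewise deduces Theorem \ref{T-mH} by applying Theorem \ref{T-3} with $X=L^{\vec{q}}(\bR^n)$, citing Nogayama's boundedness of $M$ on $L^{\vec{q}}(\bR^n)$ and the identification $\delta_{\vec{q}}=\frac{1}{n}\sum_{i=1}^n 1/q_i$ (with the Benedek--Panzone duality supplying $X'=X^*$). You merely spell out the verification of the hypotheses in more detail than the paper's one-line argument.
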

Note that Theorem \ref{T-mH} is just the result of [\cite{wei2021extrapolation}, Theorem 4.2].

Combining Theorem \ref{extrapo} with Theorem \ref{T-mH}, we obtain the following extrapolation theorem on mixed Herz spaces.
\begin{theorem}\label{extrapo-mh}
	Let $\alpha\in\bR$, $0<s<p<\infty$, $s<\vec{q}<\infty$ such that $-\sum_{i=1}^n1/q_i<\alpha<n/s-\sum_{i=1}^n1/q_i$. Suppose
	$T$ is an operator satisfying, for any given $w\in A_1(\bR^n)$ and any $f\in L^s_w(\bR^n)$,
	\begin{equation*}
	\|T(f)\|_{L^s_w}\lesssim \|f\|_{L^s_w},
	\end{equation*}
	where the implicated constant is independent on $f$, but depends on $s$ and $[w]_{A_1}$. Then for any $f\in \dot{K}^{\alpha,p}_{\vec{q}}(\bR^n)$,
	\begin{equation*}
	\|T(f)\|_{\dot{K}^{\alpha,p}_{\vec{q}}}\lesssim \|f\|_{\dot{K}^{\alpha,p}_{\vec{q}}}.
	\end{equation*}
\end{theorem}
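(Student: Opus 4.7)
The strategy is to invoke Theorem \ref{extrapo} with the choice $X=L^{\vec{q}}(\bR^n)$ and verify, one by one, that every hypothesis of that theorem is met by the mixed-norm setting. This mirrors the proof scheme of Theorem \ref{extrapo-var}: the heavy lifting was done once and for all in Theorem \ref{extrapo}; what remains is bookkeeping specific to $L^{\vec{q}}$.

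First I would note that $X^{1/s}=L^{\vec{q}/s}(\bR^n)$. Since $s<\vec{q}<\infty$, we have $\vec{q}/s\in (1,\infty)^n$, so $L^{\vec{q}/s}(\bR^n)$ is a ball Banach function space, and its associate space is $L^{(\vec{q}/s)'}(\bR^n)$ with $(\vec{q}/s)'\in (1,\infty)^n$. The Hardy--Littlewood maximal operator is known to be bounded on $L^{\vec{r}}(\bR^n)$ whenever $1<\vec{r}<\infty$ (see [\cite{nogayama2019mixed}]), which takes care of the boundedness of $M$ on $(X^{1/s})'$. Moreover, since $\vec{q}/s\in[1,\infty)^n$, the associate space and the topological dual of $L^{\vec{q}/s}(\bR^n)$ coincide (this is a well-known property of mixed-norm Lebesgue spaces, going back to Benedek--Panzone [\cite{1961The}]), so $(X^{1/s})'=(X^{1/s})^*$.

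Next I would compute the relevant exponents $\delta$. Using $\delta_{L^{\vec{r}}}=\frac{1}{n}\sum_{i=1}^n 1/r_i$ (mentioned in the paper just before Theorem \ref{T-mH}), we get
\begin{equation*}
\delta_{X^{1/s}}=\frac{s}{n}\sum_{i=1}^n\frac{1}{q_i},\qquad \delta_{(X^{1/s})'}=1-\frac{s}{n}\sum_{i=1}^n\frac{1}{q_i}.
\end{equation*}
Therefore the condition $-n\delta_{X^{1/s}}<s\alpha<n\delta_{(X^{1/s})'}$ required by Theorem \ref{extrapo} becomes
\begin{equation*}
-s\sum_{i=1}^n\frac{1}{q_i}<s\alpha<n-s\sum_{i=1}^n\frac{1}{q_i},
\end{equation*}
which after dividing by $s>0$ is exactly the hypothesis $-\sum_{i=1}^n 1/q_i<\alpha<n/s-\sum_{i=1}^n 1/q_i$ imposed in the statement.

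With all four hypotheses of Theorem \ref{extrapo} verified and with the weighted $L^s_w$ boundedness of $T$ assumed, a direct application of Theorem \ref{extrapo} yields $\|T(f)\|_{\dot{K}^{\alpha,p}_{\vec{q}}}\lesssim \|f\|_{\dot{K}^{\alpha,p}_{\vec{q}}}$ for every $f\in \dot{K}^{\alpha,p}_{\vec{q}}(\bR^n)$. I do not anticipate any real obstacle here; the only subtle point is making sure that the numerical conversion between the general $\delta$-condition of Theorem \ref{extrapo} and the explicit inequality for $\vec{q}$ is handled carefully, and that the identification $(L^{\vec{q}/s})'=(L^{\vec{q}/s})^*$ is invoked correctly (which relies on $1\leq \vec{q}/s<\infty$, a consequence of $s<\vec{q}<\infty$).
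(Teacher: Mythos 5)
Your proposal is correct and follows essentially the same route as the paper: both reduce the statement to Theorem \ref{extrapo} with $X=L^{\vec{q}}(\bR^n)$, using $\delta_{L^{\vec{r}}}=\frac{1}{n}\sum_{i=1}^n 1/r_i$ to translate the condition $-n\delta_{X^{1/s}}<s\alpha<n\delta_{(X^{1/s})'}$ into the stated inequality for $\alpha$ (the paper phrases this via the equivalent condition on $-s\alpha$ and Theorem \ref{T-mH}, which is the same computation). Your verification of the remaining hypotheses ($L^{\vec{q}/s}$ a ball Banach function space, $M$ bounded on $L^{(\vec{q}/s)'}$, and $(L^{\vec{q}/s})'=(L^{\vec{q}/s})^*$) matches what the paper relies on.
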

\begin{proof}
	Note that the condition $-\sum_{i=1}^n1/q_i<\alpha<n/s-\sum_{i=1}^n1/q_i$ is equivalent to $-\sum_{i=1}^n\frac{1}{({q_i}/s)'}<-s\alpha<\sum_{i=1}^n\frac{1}{{q_i}/s}$. Therefore, we get the boundedness of $M$ on $ \dot{K}^{-s\alpha,(p/s)'}_{(\vec{q}/s)'}(\bR^n)$ by using Theorem \ref{T-mH}. Our result follows directly from Theorem \ref{extrapo}.
\end{proof}
It is worthy mentioning that a more precise extrapolation theorem on mixed Herz spaces was obtained in [\cite{wei-2021}, Theorem 5.3], where the weighted inequality was  supposed to hold only for a subclass of $A_1(\bR^n)$ weights.
\subsection{Herz-Lorentz spaces}
We begin with the notions of Lorentz spaces.
\begin{definition}
	Given $f$ a  measurable function on $\bR^n$ and $0<r,q\leq\infty$, define
	\begin{equation*}
	\|f\|_{L^{r,q}}:=\left\{
	\begin{aligned}
	&\l(\int_{\bR^n}(t^{1/r}f^*(t))^q\frac{dt}{t}\r)^{1/q},&~q<\infty;\\
	& \sup_{t>0}t^{1/r}f^*(t), &~q=\infty;
	\end{aligned}
	\right.
	\end{equation*}
	where $f^*(y)=\inf\{s\in\bR:|\{t\in\bR^n:|g(t)|>s\}|\leq y\}$ is the non-increasing rearrangement of $f$ on $[0,\infty)$.
	The set of all $f$ with $\|f\|_{L^{r,q}}$ is denoted by $L^{r,q}(\bR^n)$ and is called the Lorentz space with indices $r$ and $q$.
\end{definition}
It was proved in [\cite{bennett1988interpolation}] that for $0<r,q\leq\infty$, $L^{r,q}(\bR^n)$ is a quasi-Banach function space. Moreover, if $1\leq r,q\leq\infty$, $L^{r,q}(\bR^n)$ is then a Banach function space. 

Following [\cite{grafakos2008classical}, Theorem 1.4.16], we know that for $1<r,q<\infty$, the dual space of $L^{r,q}(\bR^n)$ is $L^{r',q'}(\bR^n)$. Since simple functions are dense in $L^{r,q}(\bR^n)$ for $0<p,q<\infty$ (see [\cite{grafakos2008classical}, Theorem 1.4.13]), we know that $L^{r,q}(\bR^n)$ has an absolutely continuous norm.
As a consequence, the associate space of $L^{r,q}(\bR^n)$ is still $L^{r',q'}(\bR^n)$ by using [\cite{bennett1988interpolation}, Chapter 1, Corollary 4.3] for $1<r,q<\infty$.

The boundedness of $M$ on $L^{r,q}(\bR^n)$ was obtained in [\cite{arino1990maximal}, Theorem 1.7].
\begin{lemma}\label{BMHL}
	Let $1<r\leq\infty$, $1\leq q<\infty$. Then $M$ is bounded on $L^{r,q}(\bR^n)$.
\end{lemma}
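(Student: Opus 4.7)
The plan is to reduce the boundedness of $M$ on $L^{r,q}(\bR^n)$ to the classical Hardy inequality via the rearrangement estimate for the maximal operator. The starting point is the pointwise bound
$$(Mf)^*(t) \lesssim f^{**}(t) := \frac{1}{t}\int_0^t f^*(s)\,ds, \qquad t>0,$$
valid for every locally integrable $f$ on $\bR^n$. Substituting this into the definition of the Lorentz norm yields
$$\|Mf\|_{L^{r,q}} = \l(\int_0^\infty\l(t^{1/r}(Mf)^*(t)\r)^q\frac{dt}{t}\r)^{1/q} \lesssim \l(\int_0^\infty\l(t^{1/r-1}\int_0^t f^*(s)\,ds\r)^q\frac{dt}{t}\r)^{1/q},$$
so the problem reduces to a one-dimensional weighted estimate for the Hardy averaging operator applied to the non-increasing function $f^*$.

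Since $r>1$, the exponent $\alpha := 1-1/r$ is strictly positive, and the classical Hardy inequality, valid for every $1\leq q<\infty$ and $\alpha>0$, reads
$$\l(\int_0^\infty\l(t^{-\alpha}\int_0^t g(s)\,ds\r)^q\frac{dt}{t}\r)^{1/q} \leq \frac{1}{\alpha}\l(\int_0^\infty\l(t^{1-\alpha}g(t)\r)^q\frac{dt}{t}\r)^{1/q}.$$
Applying this with $g=f^*$ and using $1-\alpha=1/r$ produces exactly $\|Mf\|_{L^{r,q}}\lesssim\|f\|_{L^{r,q}}$, as required.

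The degenerate endpoint $r=\infty$ with $1\leq q<\infty$ needs a separate word: if $f$ is not identically zero, then $f^*(t)\geq c>0$ on a neighborhood of the origin, forcing $\int_0^1 f^*(t)^q\,dt/t=\infty$, so $L^{\infty,q}(\bR^n)=\{0\}$ and the inequality is vacuous. The only genuine technical point is the justification of Hardy's inequality across the full range $1\leq q<\infty$: for $q=1$ it follows at once from Fubini (indeed, with equality), while for $1<q<\infty$ it is the standard weighted Hardy inequality of Hardy, Littlewood, and Polya. Since both inputs are entirely classical, the argument is routine, which is why the authors simply cite the more general Arino--Muckenhoupt theorem rather than reproducing the short proof.
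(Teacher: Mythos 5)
Your argument is correct. Note that the paper itself gives no proof of this lemma: it simply cites Ari\~no--Muckenhoupt, whose Theorem 1.7 characterizes boundedness of $M$ on the classical Lorentz spaces $\Lambda_q(w)$ via the $B_q$ condition on the weight; the space $L^{r,q}$ is the special case $w(t)=t^{q/r-1}$, and $w\in B_q$ exactly when $r>1$. Your proof reconstructs this special case directly and self-containedly: the Herz--Stein estimate $(Mf)^*(t)\lesssim f^{**}(t)$ (which is also the engine behind the cited theorem) reduces everything to a weighted Hardy inequality, and for the pure power weight the classical Hardy--Littlewood--P\'olya inequality with constant $1/\alpha$, $\alpha=1-1/r>0$, applies to arbitrary nonnegative $g$, so you never need the restriction to non-increasing functions that makes the general $B_q$ theory nontrivial. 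Your disposal of the endpoint $r=\infty$, $q<\infty$ by observing that $L^{\infty,q}(\bR^n)=\{0\}$ under the paper's definition is also correct. The only point worth flagging is that the bound $(Mf)^*\lesssim f^{**}$ is itself a nontrivial classical fact (it needs the Calder\'on--Zygmund decomposition or the joint weak-type $(1,1;\infty,\infty)$ machinery), so a careful write-up should cite it explicitly, e.g.\ Bennett--Sharpley, Chapter 3; with that reference in place the proof is complete.
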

Now we are in a position to define Herz-Lorentz spaces. Although the definition is obvious, we write them here since Herz-Lorentz spaces haven't appeared anywhere so far to the best of our knowledge. 
\begin{definition}\label{H_L}
	Let ~$\alpha\in \bR$, $0< p<\infty$ and $0<r,q\leq\infty$.
	
	{\rm (i)} The homogeneous Herz-Lorentz space $\dot{K}^{\alpha,p}_{r,q}(\bR^n)$ is defined by
	\begin{eqnarray*}
		\|f\|_{\dot{K}^{\alpha,p}_{r,q}}:=
		\l\{f\in L^{r,q}_{\rm{loc}}(\bR^n\backslash\{0\}): \|f\|_{\dot{K}^{\alpha,p}_{r,q}}<\infty\r\},
	\end{eqnarray*}
	where
	\begin{eqnarray*}
		\|f\|_{\dot{K}^{\alpha,p}_{r,q}}:=
		\l\{\sum_{k\in\mathbb{Z}}2^{kp\alpha}\|f\chi_{k}\|^p_{L^{r,q}}\r\}^{{1}/{p}}.
	\end{eqnarray*}
If $p=\infty$, then we have to make appropriate modifications.

	{\rm (ii)} The non-homogeneous Herz-Lorentz space $K^{\alpha,p}_{r,q}(\bR^n)$ is defined by
	\begin{eqnarray*}
		\|f\|_{K^{\alpha,p}_{r,q}}:=
		\l\{f\in L^{r,q}_{\rm{loc}}(\bR^n): \|f\|_{K^{\alpha,p}_{r,q}}<\infty\r\},
	\end{eqnarray*}
	where \begin{eqnarray*}
		\|f\|_{K^{\alpha,p}_{r,q}}:=
		\l\{\sum_{k\in\mathbb{N}}2^{kp\alpha}\|f\tilde{\chi}_k\|^p_{L^{r,q}}\r\}^{{1}/{p}}.
	\end{eqnarray*}
If $p=\infty$, then we have to make appropriate modifications.
\end{definition}

It was proved in [\cite{grafakos2008classical}, Example 1.4.8] that for any measurable set $E\subseteq \bR^n$, $\|\chi_E\|_{L^{r,q}}\sim |E|^{1/r}$ for $0<p,q<\infty$. Consequently, $\delta_{L^{r,q}}=1/r$. From this observation and Lemma \ref{BMHL}, we can prove the boundedness of $M$ on $\dot{K}^{\alpha,p}_{r,q}(\bR^n)$ by using Theorem \ref{T-3}.
\begin{theorem}\label{T-hl}
	Let $\alpha\in\bR$, $0<p<\infty$,  $1<r<\infty$ and $1\leq q<\infty$, such that $-n/r<\alpha<n/r'.$ Then the Hardy-Littlewood maximal operator $M$ is bounded on $\dot{K}^{\alpha,p}_{r,q}(\bR^n)$.
\end{theorem}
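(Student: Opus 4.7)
My plan is to obtain Theorem \ref{T-hl} as a direct application of the general result Theorem \ref{T-3} with the specific choice $X = L^{r,q}(\bR^n)$. So the task reduces to verifying, one by one, that $L^{r,q}(\bR^n)$ satisfies all the structural hypotheses required by Theorem \ref{T-3} under the given parameter ranges, and then reading off the conclusion.

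First, I would record that for $1<r<\infty$ and $1\leq q<\infty$, the Lorentz space $L^{r,q}(\bR^n)$ is a ball Banach function space; this was noted just before the definition of Herz-Lorentz spaces, citing [\cite{bennett1988interpolation}]. Second, I would verify the identification $X'=X^*$. Here I would invoke the facts recalled earlier in Subsection 6.3: for $1<r,q<\infty$ the associate space of $L^{r,q}(\bR^n)$ is $L^{r',q'}(\bR^n)$ and coincides with the dual space, while for $q=1$ the absolute continuity of the norm of $L^{r,q}(\bR^n)$ (since simple functions are dense by [\cite{grafakos2008classical}, Theorem 1.4.13]) combined with [\cite{bennett1988interpolation}, Chapter 1, Corollary 4.3] still yields $X'=X^*$. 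Third, I would use Lemma \ref{BMHL} to conclude that $M$ is bounded on $L^{r,q}(\bR^n)$ (and, in the $q>1$ case, on $L^{r',q'}(\bR^n)$ as well, which legitimizes invoking Lemmas \ref{L-2-4}--\ref{L-2-5} with $X'$ in the background estimates of Theorem \ref{T-3}).

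Fourth, I would compute the constants $\delta_X$ and $\delta_{X'}$. Using $\|\chi_E\|_{L^{r,q}}\sim |E|^{1/r}$ from [\cite{grafakos2008classical}, Example 1.4.8] (as already observed in the excerpt just before the theorem statement), the ratio $\|\chi_E\|_{L^{r,q}}/\|\chi_B\|_{L^{r,q}}$ is comparable to $(|E|/|B|)^{1/r}$, which identifies $\delta_{L^{r,q}}=1/r$; the analogous calculation with the exponent $r'$ gives $\delta_{L^{r',q'}}=1/r'$. Consequently, the hypothesis $-n/r<\alpha<n/r'$ in Theorem \ref{T-hl} is exactly the condition $-n\delta_X<\alpha<n\delta_{X'}$ appearing in Theorem \ref{T-3}.

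With all structural hypotheses verified, I would simply apply Theorem \ref{T-3} to conclude that $M$ is bounded on $\dot{K}^{\alpha,p}_{X}(\bR^n)=\dot{K}^{\alpha,p}_{r,q}(\bR^n)$. There is no real obstacle here; the proof is a bookkeeping exercise in plugging $X=L^{r,q}(\bR^n)$ into the general machinery. The only mildly delicate point is the endpoint $q=1$, which forces the use of absolute continuity of the norm rather than a direct duality calculation to obtain $X'=X^*$; this is precisely why I would separate that case in the second step above.
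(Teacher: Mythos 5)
Your proposal is correct and follows essentially the same route as the paper: the paper also deduces Theorem \ref{T-hl} by specializing Theorem \ref{T-3} to $X=L^{r,q}(\bR^n)$, using Lemma \ref{BMHL} for the boundedness of $M$ on $L^{r,q}(\bR^n)$ and the computation $\|\chi_E\|_{L^{r,q}}\sim|E|^{1/r}$ to identify $\delta_{L^{r,q}}=1/r$ (and likewise $\delta_{L^{r',q'}}=1/r'$), so that $-n/r<\alpha<n/r'$ matches the hypothesis $-n\delta_X<\alpha<n\delta_{X'}$. Your additional care at the endpoint $q=1$ (obtaining $X'=X^*$ from absolute continuity of the norm rather than the $1<q<\infty$ duality) is a reasonable elaboration of details the paper leaves implicit.
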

Combining Theorem \ref{extrapo} with Theorem \ref{T-hl}, and using the basic theories of Lorentz spaces mentioned above, we can establish the extrapolation theorem on Herz-Lorentz spaces as follows.
\begin{theorem}\label{T-hl2}
	Let $\alpha\in\bR$, $0<s<p<\infty$,  $s<r<\infty$ and $s< q<\infty$, such that $-n/r<\alpha<n(1/s-1/r)$. Suppose
	$T$ is an operator satisfying, for any given $w\in A_1(\bR^n)$ and any $f\in L^s_w(\bR^n)$,
	\begin{equation*}
	\|T(f)\|_{L^s_w}\lesssim \|f\|_{L^s_w},
	\end{equation*}
	where the implicated constant is independent on $f$, but depends on $s$ and $[w]_{A_1}$. Then for any $f\in \dot{K}^{\alpha,p}_{r,q}(\bR^n)$,
	\begin{equation*}
	\|T(f)\|_{\dot{K}^{\alpha,p}_{r,q}}\lesssim \|f\|_{\dot{K}^{\alpha,p}_{r,q}}.
	\end{equation*}
\end{theorem}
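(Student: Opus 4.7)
The plan is to derive Theorem \ref{T-hl2} by setting $X=L^{r,q}(\bR^n)$ and verifying, one by one, the hypotheses of the general extrapolation result (Theorem \ref{extrapo}). Nearly everything reduces to standard facts about Lorentz spaces that are already collected in the paragraphs preceding Definition \ref{H_L}.

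First I would identify the $s$-convexification: directly from Definition \ref{D-2-3}, $X^{1/s}=L^{r/s,\,q/s}(\bR^n)$. Since $s<r$ and $s<q$, both indices $r/s$ and $q/s$ lie in $(1,\infty)$, so $X^{1/s}$ is a ball Banach function space, and its associate space is $(X^{1/s})'=L^{(r/s)',(q/s)'}(\bR^n)$. Here $(r/s)'\in(1,\infty)$ and $(q/s)'\in(1,\infty)$, so Lemma \ref{BMHL} delivers the boundedness of $M$ on $(X^{1/s})'$. Moreover, since both indices of $(X^{1/s})'$ are finite, simple functions are dense there, which yields an absolutely continuous norm; combined with the discussion following Definition \ref{H_L}, this gives $(X^{1/s})'=(X^{1/s})^*$.

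Next I would compute the $\delta$ constants. Using the formula $\|\chi_E\|_{L^{a,b}}\sim |E|^{1/a}$ recalled in the previous subsection, Lemma \ref{L-2-5} forces $\delta_{L^{a,b}}=1/a$. Therefore $\delta_{X^{1/s}}=s/r$ and $\delta_{(X^{1/s})'}=1/(r/s)'=1-s/r$. Substituting these into the required condition $-n\delta_{X^{1/s}}<s\alpha<n\delta_{(X^{1/s})'}$ gives $-ns/r<s\alpha<n(1-s/r)$, which is precisely equivalent to the hypothesis $-n/r<\alpha<n(1/s-1/r)$ after dividing by $s$. With every hypothesis of Theorem \ref{extrapo} now verified, applying that theorem to $X=L^{r,q}(\bR^n)$ immediately yields $\|T(f)\|_{\dot{K}^{\alpha,p}_{r,q}}\lesssim\|f\|_{\dot{K}^{\alpha,p}_{r,q}}$.

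No step really poses a serious obstacle; the only delicate point is bookkeeping the two conditions $s<q$ and $s<r$ so that the Lorentz-space duality, the density of simple functions, and the boundedness of $M$ on $(X^{1/s})'$ are all simultaneously available. Once those are in place, the proof is a direct specialization of Theorem \ref{extrapo}, completely parallel to the proof of Theorem \ref{extrapo-mh} in the mixed-norm case.
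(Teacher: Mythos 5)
Your proposal is correct and follows essentially the same route as the paper: both proofs reduce the statement to Theorem \ref{extrapo} with $X=L^{r,q}(\bR^n)$, using $X^{1/s}=L^{r/s,q/s}(\bR^n)$, the Lorentz duality and maximal-function facts collected before Definition \ref{H_L}, and the computation $\delta_{L^{a,b}}=1/a$ to translate $-n\delta_{X^{1/s}}<s\alpha<n\delta_{(X^{1/s})'}$ into $-n/r<\alpha<n(1/s-1/r)$. The only cosmetic difference is that the paper phrases the index check as the boundedness of $M$ on $\dot{K}^{-s\alpha,(p/s)'}_{(r/s)',(q/s)'}(\bR^n)$ via Theorem \ref{T-hl}, while you verify the hypotheses of Theorem \ref{extrapo} directly (and note that the absolute continuity should be asserted for $X^{1/s}$ itself, not for $(X^{1/s})'$, to get $(X^{1/s})'=(X^{1/s})^*$).
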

\begin{proof}
	Note that the condition $-n/r<\alpha<n(1/s-1/r)$ is equivalent to $-n\frac{1}{(r/s)'}<-s\alpha<n\frac{1}{r/s}$. Therefore, $M$ is bounded on 
	$\dot{K}^{-s\alpha,(p/s)'}_{(r/s)',(q/s)'}(\bR^n)$ by using Theorem \ref{T-hl}. Our result follows directly from Theorem \ref{extrapo}.
\end{proof}
By using Theorem \ref{T-hl2}, one can see that all the results in Section 5 hold for Herz-Lorentz spaces.

% ------------------------------------------------------------------------
\subsection*{Declarations}
{\bf Conflict of interest}~  The author’s declare that they have no conflict of interest.

\subsection*{Data availability statement}
Our manuscript has no associated data.

\subsection*{Acknowledgements}
The authors would like to express their deep gratitude to the anonymous referees for their careful reading of the manuscript and their comments and suggestions. This work is supported by the National Natural Science Foundation of China (No. 11871452), the Natural Science Foundation of Henan Province (No. 202300410338) and the Nanhu Scholar Program for Young Scholars of Xinyang Normal University.\\

%\bibliographystyle{abbrv}
%\bibliography{mybib}

\begin{thebibliography}{10}
	
	\bibitem{abdalmonem2021intrinsic}
	A.~Abdalmonem and A.~Scapellato.
	\newblock Intrinsic square functions and commutators on {M}orrey-{H}erz spaces
	with variable exponents.
	\newblock {\em Mathematical Methods in the Applied Sciences},
	44(17):12408--12425, 2021.
	
	\bibitem{almeida2012maximal}
	A.~Almeida and D.~Drihem.
	\newblock Maximal, potential and singular type operators on {H}erz spaces with
	variable exponents.
	\newblock {\em Journal of Mathematical Analysis and Applications},
	394(2):781--795, 2012.
	
	\bibitem{arino1990maximal}
	M.~A. Ari{\~n}o and B.~Muckenhoupt.
	\newblock Maximal functions on classical {L}orentz spaces and {H}ardy’s
	inequality with weights for nonincreasing functions.
	\newblock {\em Transactions of the American Mathematical Society},
	320(2):727--735, 1990.
	
	\bibitem{1961The}
	A.~Benedek and R.~Panzone.
	\newblock The space {$L^p$}, with mixed norm.
	\newblock {\em Duke Mathematical Journal}, 28(3):301--324, 1961.
	
	\bibitem{bennett1988interpolation}
	C.~Bennett and R.~C. Sharpley.
	\newblock {\em Interpolation of operators}.
	\newblock Academic press, 1988.
	
	\bibitem{chang2020littlewood}
	D.-C. Chang, S.~Wang, D.~Yang, and Y.~Zhang.
	\newblock Littlewood-paley characterizations of {H}ardy-type spaces associated
	with ball quasi-{B}anach function spaces.
	\newblock {\em Complex Analysis and Operator Theory}, 14(3):1--33, 2020.
	
	\bibitem{chanillo1987weak}
	S.~Chanillo and M.~Christ.
	\newblock Weak (1, 1) bounds for oscillatory singular integrals.
	\newblock {\em Duke mathematical journal}, 55(1):141--155, 1987.
	
	\bibitem{coifman1976factorization}
	R.~R. Coifman, R.~Rochberg, and G.~Weiss.
	\newblock Factorization theorems for {H}ardy spaces in several variables.
	\newblock {\em Annals of Mathematics}, 103(3):611--635, 1976.
	
	\bibitem{cruz2013variable}
	D.~V. Cruz-Uribe and A.~Fiorenza.
	\newblock {\em Variable {L}ebesgue spaces: {F}oundations and harmonic
		analysis}.
	\newblock Springer Science \& Business Media, 2013.
	
	\bibitem{diening2011lebesgue}
	L.~Diening, P.~Harjulehto, P.~H{\"a}st{\"o}, and M.~Ruzicka.
	\newblock {\em Lebesgue and Sobolev spaces with variable exponents}.
	\newblock Springer, 2011.
	
	\bibitem{diening2009function}
	L.~Diening, P.~H{\"a}st{\"o}, and S.~Roudenko.
	\newblock Function spaces of variable smoothness and integrability.
	\newblock {\em Journal of Functional Analysis}, 256(6):1731--1768, 2009.
	
	\bibitem{dong2015herz}
	B.~Dong and J.~Xu.
	\newblock Herz--{M}orrey type {B}esov and {T}riebel-{L}izorkin spaces with
	variable exponents.
	\newblock {\em Banach Journal of Mathematical Analysis}, 9(1):75--101, 2015.
	
	\bibitem{duoandikoetxea1993weighted}
	J.~Duoandikoetxea.
	\newblock Weighted norm inequalities for homogeneous singular integrals.
	\newblock {\em Transactions of the American Mathematical Society},
	336(2):869--880, 1993.
	
	\bibitem{fan1995boundedness}
	D.~Fan and Y.~Pan.
	\newblock Boundedness of certain oscillatory singular integrals.
	\newblock {\em Studia Mathematica}, 114(2):105--116, 1995.
	
	\bibitem{fu2007characterization}
	Z.~Fu, Z.~Liu, S.~Lu, and H.~Wang.
	\newblock Characterization for commutators of $n$-dimensional fractional
	{H}ardy operators.
	\newblock {\em Science in China Series A: Mathematics}, 50(10):1418--1426,
	2007.
	
	\bibitem{grafakos2008classical}
	L.~Grafakos.
	\newblock {\em Classical {F}ourier analysis}, volume~2.
	\newblock Springer, 2008.
	
	\bibitem{grafakos1998bilinear}
	L.~Grafakos, X.~Li, and D.~Yang.
	\newblock Bilinear operators on {H}erz-type {H}ardy spaces.
	\newblock {\em Transactions of the American Mathematical Society},
	350(3):1249--1275, 1998.
	
	\bibitem{hernandez1999interpolation}
	E.~Hern{\'a}ndez and D.~Yang.
	\newblock Interpolation of {H}erz spaces and applications.
	\newblock {\em Mathematische Nachrichten}, 205(1):69--87, 1999.
	
	\bibitem{herz1968lipschitz}
	C.~S. Herz.
	\newblock Lipschitz spaces and {B}ernstein's theorem on absolutely convergent
	{F}ourier transforms.
	\newblock {\em Journal of Mathematics and Mechanics}, 18(4):283--323, 1968.
	
	\bibitem{ho2019extrapolation}
	K.-P. Ho.
	\newblock Extrapolation to {H}erz spaces with variable exponents and
	applications.
	\newblock {\em Revista Matem$\acute{a}$tica Complutense}, 33(2):437--463, 2020.
	
	\bibitem{ho2019erdelyi}
	K.-P. Ho.
	\newblock Erd{\'e}lyi-kober fractional integral operators on ball {B}anach
	function spaces.
	\newblock {\em Rendiconti del Seminario matematico della Universit{\`a} di
		Padova}, 145:93--106, 2021.
	
	\bibitem{ho2020spherical}
	K.-P. Ho.
	\newblock Spherical maximal function, maximal {B}ochner--{R}iesz mean and
	geometrical maximal function on {H}erz spaces with variable exponents.
	\newblock {\em Rendiconti del Circolo Matematico di Palermo Series 2},
	70(1):559--574, 2021.
	
	\bibitem{hormander1960estimates}
	L.~H{\"o}rmander.
	\newblock Estimates for translation invariant operators in {$L^p$} spaces.
	\newblock {\em Acta Mathematica}, 104(1):93--140, 1960.
	
	\bibitem{izuki2010boundedness}
	M.~Izuki.
	\newblock Boundedness of sublinear operators on {H}erz spaces with variable
	exponent and application to wavelet characterization.
	\newblock {\em Analysis Mathematica}, 36(1):33--50, 2010.
	
	\bibitem{izuki2010commutators}
	M.~Izuki.
	\newblock Commutators of fractional integrals on {L}ebesgue and {H}erz spaces
	with variable exponent.
	\newblock {\em Rendiconti del Circolo Matematico di Palermo}, 59(3):461--472,
	2010.
	
	\bibitem{izuki2016boundedness}
	M.~Izuki and T.~Noi.
	\newblock Boundedness of fractional integrals on weighted {H}erz spaces with
	variable exponent.
	\newblock {\em Journal of Inequalities and Applications}, 2016(1):15 pages,
	2016.
	
	\bibitem{izuki2016hardy}
	M.~Izuki and T.~Noi.
	\newblock Hardy spaces associated to critical {H}erz spaces with variable
	exponent.
	\newblock {\em Mediterranean Journal of Mathematics}, 13(5):2981--3013, 2016.
	
	\bibitem{izuki2019john}
	M.~Izuki, T.~Noi, and Y.~Sawano.
	\newblock The {J}ohn--{N}irenberg inequality in ball {B}anach function spaces
	and application to characterization of {BMO}.
	\newblock {\em Journal of Inequalities and Applications}, 2019(1):11 pages,
	2019.
	
	\bibitem{john1961functions}
	F.~John and L.~Nirenberg.
	\newblock On functions of bounded mean oscillation.
	\newblock {\em Communications on pure and applied Mathematics}, 14(3):415--426,
	1961.
	
	\bibitem{kempka2014lorentz}
	H.~Kempka and J.~Vyb{\'\i}ral.
	\newblock Lorentz spaces with variable exponents.
	\newblock {\em Mathematische Nachrichten}, 287(8-9):938--954, 2014.
	
	\bibitem{komori2004notes}
	Y.~Komori.
	\newblock Notes on singular integrals on some inhomogeneous {H}erz spaces.
	\newblock {\em Taiwanese Journal of Mathematics}, 8(3):547--556, 2004.
	
	\bibitem{li1996boundedness}
	X.~Li and D.~Yang.
	\newblock Boundedness of some sublinear operators on {H}erz spaces.
	\newblock {\em Illinois Journal of Mathematics}, 40(3):484--501, 1996.
	
	\bibitem{liu2000boundedness}
	Z.~Liu.
	\newblock Boundedness of commutators of fractional integration on {H}erz-type
	spaces.
	\newblock {\em Acta Mathematica Scientia}, 20(4):461--470, 2000.
	
	\bibitem{lu2007singular}
	S.~Lu, Y.~Ding, and D.~Yan.
	\newblock {\em Singular integrals and related topics}.
	\newblock World Scientific, 2007.
	
	\bibitem{lu1995oscillatory}
	S.~Lu and D.~Yang.
	\newblock Oscillatory singular integrals on {H}ardy spaces associated with
	{H}erz spaces.
	\newblock {\em Proceedings of the American Mathematical Society},
	123(6):1695--1701, 1995.
	
	\bibitem{lu1996Hardy}
	S.~Lu and D.~Yang.
	\newblock Hardy-{L}ittlewood-{S}obolev theorems of fractional integration on
	{H}erz-type spaces and its applications.
	\newblock {\em Canadian Journal of Mathematics}, 48(2):363--380, 1996.
	
	\bibitem{lu2008herz}
	S.~Lu, D.~Yang, and G.~Hu.
	\newblock {\em Herz type spaces and their applications}.
	\newblock Science press Beijing, 2008.
	
	\bibitem{luzhang1992weighted}
	S.~Lu and Y.~Zhang.
	\newblock Weighted norm inequality of a class of oscillatory integral
	operators.
	\newblock {\em Chinese Science Bulletin}, 37(1):9--13, 1992.
	
	\bibitem{miyachi2001remarks}
	A.~Miyachi.
	\newblock Remarks on {H}erz-type {H}ardy spaces.
	\newblock {\em Acta Mathematica Sinica}, 17(2):339--360, 2001.
	
	\bibitem{nogayama2019mixed}
	T.~Nogayama.
	\newblock Mixed {M}orrey spaces.
	\newblock {\em Positivity}, 23(4):961--1000, 2019.
	
	\bibitem{pan1991hardy}
	Y.~Pan.
	\newblock Hardy spaces and oscillatory singular integrals.
	\newblock {\em Revista Matem{\'a}tica Iberoamericana}, 7(1):55--64, 1991.
	
	\bibitem{ragusa2009homogeneous}
	M.~A. Ragusa.
	\newblock Homogeneous {H}erz spaces and regularity results.
	\newblock {\em Nonlinear Analysis: Theory, Methods \& Applications},
	71(12):1909--1914, 2009.
	
	\bibitem{ragusa2012parabolic}
	M.~A. Ragusa.
	\newblock Parabolic {H}erz spaces and their applications.
	\newblock {\em Applied Mathematics Letters}, 25(10):1270--1273, 2012.
	
	\bibitem{ricci1987harmonic}
	F.~Ricci and E.~M. Stein.
	\newblock Harmonic analysis on nilpotent groups and singular integrals {I}.
	{O}scillatory integrals.
	\newblock {\em Journal of Functional Analysis}, 73(1):179--194, 1987.
	
	\bibitem{sato1998remarks}
	S.~Sato.
	\newblock Remarks on square functions in the {L}ittlewood-{P}aley theory.
	\newblock {\em Bulletin of the Australian Mathematical Society},
	58(2):199--211, 1998.
	
	\bibitem{sawano2017hardy}
	Y.~Sawano, K.-P. Ho, D.~Yang, and S.~Yang.
	\newblock Hardy spaces for ball quasi-{B}anach function spaces.
	\newblock {\em Dissertationes mathematicae}, 525:1--102, 2017.
	
	\bibitem{stein1958functions}
	E.~M. Stein.
	\newblock On the functions of {L}ittlewood-{P}aley, {L}usin, and
	{M}arcinkiewicz.
	\newblock {\em Transactions of the American Mathematical Society},
	88(2):430--466, 1958.
	
	\bibitem{tang2007cbmo}
	C.~Tang.
	\newblock {CBMO} estimates for commutators of multilinear fractional integral
	operators on {H}erz spaces.
	\newblock {\em Integral Equations and Operator Theory}, 59(2):257--267, 2007.
	
	\bibitem{tao2021}
	J.~Tao, D.~Yang, W.~Yuan and Y. Zhang. Compactness characterizations of commutators on ball Banach function spaces. {\em Potential Analysis}, online, 2021.
	
	
	
	\bibitem{wang2020applications}
	F.~Wang, D.~Yang, and S.~Yang.
	\newblock Applications of {H}ardy spaces associated with ball quasi-{B}anach
	function spaces.
	\newblock {\em Results in Mathematics}, 75(1):1--58, 2020.
	
	\bibitem{wang2016continuity}
	H.~Wang.
	\newblock The continuity of commutators on {H}erz-type {H}ardy spaces with
	variable exponent.
	\newblock {\em Kyoto Journal of Mathematics}, 56(3):559--573, 2016.
	
	\bibitem{wang2021estimates}
	H.~Wang and Z.~Fu.
	\newblock Estimates of commutators on {H}erz-type spaces with variable exponent
	and applications.
	\newblock {\em Banach Journal of Mathematical Analysis}, 15(2):1--26, 2021.
	
	\bibitem{wang2012herz}
	H.~Wang and Z.~Liu.
	\newblock The {H}erz-type {H}ardy spaces with variable exponent and their
	applications.
	\newblock {\em Taiwanese Journal of Mathematics}, 16(4):1363--1389, 2012.
	
	\bibitem{wang2020boundedness}
	H.~Wang and Z.~Liu.
	\newblock Boundedness of singular integral operators on weak {H}erz type spaces
	with variable exponent.
	\newblock {\em Annals of Functional Analysis}, 11(4):1108--1125, 2020.
	
	\bibitem{wang2018higher}
	L.~Wang and L.~Shu.
	\newblock Higher order commutators of fractional integrals on {M}orrey type
	spaces with variable exponents.
	\newblock {\em Mathematische Nachrichten}, 291(8-9):1437--1449, 2018.
	
	\bibitem{wang2021weak}
	S.~Wang, D.~Yang, W.~Yuan, and Y.~Zhang.
	\newblock Weak hardy-type spaces associated with ball quasi-{B}anach function
	spaces {II}: {L}ittlewood-{P}aley characterizations and real interpolation.
	\newblock {\em The Journal of Geometric Analysis}, 31(1):631--696, 2021.
	
	\bibitem{watson1990weighted}
	D.~K. Watson.
	\newblock Weighted estimates for singular integrals via {F}ourier transform
	estimates.
	\newblock {\em Duke Mathematical Journal}, 60(2):389--399, 1990.
	
	\bibitem{wei2021characterization}
	M.~Wei.
	\newblock A characterization of ${CMO}^{\vec{q}}$ via the commutator of
	{H}ardy-type operators on mixed {H}erz spaces.
	\newblock {\em Applicable Analysis}, pages 1--16, 2021.
	
	\bibitem{wei-2021}
	M.~Wei.
	\newblock Extrapolation to mixed {H}erz spaces and its applications.
	\newblock {\em submitted}, 2021.
	
	\bibitem{wei2021extrapolation}
	M.~Wei.
	\newblock Extrapolation to product {H}erz spaces and some applications.
	\newblock {\em Forum Mathematicum}, 33(5):1097--1123, 2021.
	
	\bibitem{xu2018variable}
	J.~Xu and X.~Yang.
	\newblock Variable exponent {H}erz type {B}esov and {T}riebel--{L}izorkin
	spaces.
	\newblock {\em Georgian Mathematical Journal}, 25(1):135--148, 2018.
	
	\bibitem{yan2020intrinsic}
	X.~Yan, D.~Yang, and W.~Yuan.
	\newblock Intrinsic square function characterizations of {H}ardy spaces
	associated with ball quasi-{B}anach function spaces.
	\newblock {\em Frontiers of Mathematics in China}, 15(4):769--806, 2020.
	
	\bibitem{yang2017survey}
	D.~Yang, W.~Yuan, and C.~Zhuo.
	\newblock A survey on some variable function spaces.
	\newblock {\em Function Spaces and Inequalities}, pages 299--335, 2017.
	
	\bibitem{zhang2021weak}
	Y.~Zhang, D.~Yang, W.~Yuan, and S.~Wang.
	\newblock Weak {H}ardy-type spaces associated with ball quasi-{B}anach function
	spaces {I}: {D}ecompositions with applications to boundedness of
	{C}alder{\'o}n-{Z}ygmund operators.
	\newblock {\em Science China Mathematics}, 64(9):2007--2064, 2021.
	
\end{thebibliography}

% ------------------------------------------------------------------------
\end{document}